\newtheorem{theorem}{Theorem}[section]
\newtheorem{corollary}[theorem]{Corollary}
\newtheorem{lemma}[theorem]{Lemma}
\newtheorem{claim}[theorem]{Claim}
\newtheorem{mainthm}[theorem]{Main Theorem}
\theoremstyle{definition}
\newtheorem{definition}[theorem]{Definition}
\newtheorem{remark}[theorem]{Remark}
\newtheorem{fact}[theorem]{Fact}
\numberwithin{equation}{section}
\begin{document}

%%%%% To ease editing, for IMPAN journals add:

\baselineskip=17pt

%%%%%%%%%%%%%%%%

\title[Permutation Groups]{Pseudofinite primitive permutation groups acting on one-dimensional sets}

\author[T. Zou]{Tingxiang Zou}
\address{Institut Camille Jordan\\
Universit\'{e} Lyon I \\
21, avenue Claude Bernard\\
69622 Villeurbanne, France}
\email{zou@math.univ-lyon1.fr}

\date{}

\begin{abstract}
Working in a theory with an integer-valued dimension on interpretable sets, we classify pseudofinite definably primitive permutation groups acting on one-dimensional sets which satisfy a version of chain condition on centralizers and on point-wise stabilizers. This generalises the classification of pseudofinite definably primitive permutation groups acting on a rank $1$ set in supersimple theories of finite rank in \cite{Elwes} to supersimple theories of infinite rank.
\end{abstract}

\subjclass[2010]{Primary 03C20; Secondary 20B15}

\keywords{model theory, pseudofinite, primitive permutation groups, dimension}

\maketitle

\section{Introduction}

Let $G$ be a transitive permutation group acting on a set $X$. 
We say that $(G,X)$ is \textsl{pseudofinite} if it is elementarily equivalent to a non-principal ultraproduct of finite permutation groups. We might as well assume that $(G,X)=\prod_{i\in I}(G_i,X_i)/\mathcal{U}$ for some non-principal ultrafilter $\mathcal{U}$ on an infinite set $I$.

Finite primitive permutation groups have been classified into several types by the well-known O'Nan-Scott Theorem. This classification reduces most problems concerning finite primitive permutation groups to problems of finite simple groups. Together with the Classification of Finite Simple Groups, it gives a good understanding of finite primitive permutation groups. As pseudofinite groups can be seen as limits of finite groups, we might wonder if it is also possible to give a nice description of pseudofinite permutation groups. There have been some attempts. In \cite{Liebeck}, pseudofinite definably primitive permutation groups have been extensively studied via the O'Nan-Scott Theorem. In \cite{Elwes}, under the additional assumption that $(G,X)$ lives in a supersimple theory of finite $SU$-rank and that the $SU$-rank of $X$ is one, Elwes, Jaligot, Macpherson and Ryten managed to get a complete classification, which is analogous to the well-known classification of stable permutation groups acting on strongly minimal sets in \cite{hrushovski1989almost}.

We specify the language for permutation groups: $\mathcal{L}$ contains two sorts $G$ and $X$, with the group language $\{\cdot,(-)^{-1},id\}$ on $G$ and a function $(-)^{(-)}:X\times G\to X$ which represents the action of $G$ on $X$.

We recall the classification in \cite{Elwes}.
\begin{fact}\label{Elwes-main} (\cite[Theorem 1.3]{Elwes})

Let $(G,X)$ be a pseudofinite definably primitive permutation group. Let $T$ be the theory of $(G,X)$ in the language $\mathcal{L}$. Suppose $T$ is supersimple of finite $SU$-rank such that $T^{eq}$ eliminates $\exists^\infty$ and $SU(X)=1$. Then the socle of $G$ (the subgroup generated by all minimal non-trivial normal subgroups), $soc(G)$, exists and is definable, and one of the following holds:
\begin{enumerate}
\item
$SU(G)=1$, and $soc(G)$ is abelian of finite index in $G$ and acts regularly on $X$;
\item
$SU(G)=2$, and there is an interpretable pseudofinite field $F$ of $SU$-rank $1$ such that $(G,X)$ is definably isomorphic to $(F^+\rtimes H,F^+)$, where $H\leq F^\times$ is of finite index.
\item
$SU(G)=3$, and there is an interpretable pseudofinite field $F$ of $SU$-rank 1 such that $(G,X)$ is definably isomorphic to $(H,\text{PG}_1(F))$, where $\text{PSL}_2(F)\leq H\leq \text{P}\Gamma\text{L}_2(F)$.\footnote{In fact, we think $H$ should be contained in $\text{PGL}_2(F)$, there shouldn't be any non-trivial automorphism of $F$ induced by $G$, see Lemma \ref{lem-Btrivial} and Corollary \ref{thmnonsimple}.} Moreover, $soc(G)$ is definably isomorphic to $\text{PSL}_2(F)$.
\end{enumerate}
\end{fact}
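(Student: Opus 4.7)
The plan is to follow Hrushovski's classification of permutation groups acting on strongly minimal sets, replacing Morley rank with $SU$-rank and using the pseudofinite counting machinery available in this setting. First I would set up the rank arithmetic: since the action is transitive with $SU(X) = 1$, the Lascar inequalities in a supersimple theory of finite rank yield $SU(G) = SU(G_x) + 1$ for any $x \in X$, so the three cases correspond to $SU(G_x) \in \{0,1,2\}$. The bound $SU(G_x) \le 2$ is the key structural input; it comes from a generic-transitivity argument on the tower of stabilisers $G_{x_1,\dots,x_k}$, which drops by one step per generic independent point, together with definable primitivity and faithfulness forcing the tower to collapse after at most three steps. In parallel I would verify that the socle is definable: a descending chain condition argument in supersimple finite rank shows that $G$ has only finitely many minimal definable normal subgroups, so their product is well-defined and one can apply an O'Nan--Scott-style reduction to its structure.

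For the case $SU(G) = 1$, the stabiliser $G_x$ is finite, so $G$ acts regularly modulo a finite kernel; the classification of pseudofinite groups of $SU$-rank one then supplies a definable abelian minimal normal subgroup acting regularly, yielding (1). For $SU(G) = 2$, the stabiliser $G_x$ has $SU$-rank one and acts generically transitively on $X \setminus \{x\}$; a group configuration (equivalently, Hrushovski's theorem on recovering a field from a generically sharply $2$-transitive action) yields an interpretable pseudofinite field $F$ of $SU$-rank one, and faithfulness together with primitivity identify $(G,X)$ with the affine action $(F^+ \rtimes H, F^+)$ where $H \le F^\times$ has finite index. For $SU(G) = 3$, the two-point stabiliser $G_{x,y}$ has $SU$-rank one and acts generically transitively on $X \setminus \{x,y\}$; the generic $3$-transitivity of $G$ on $X$ lets a group configuration interpret a field $F$, and a recognition theorem identifies $\mathrm{soc}(G)$ with $\mathrm{PSL}_2(F)$. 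The containment $H \le \mathrm{P}\Gamma\mathrm{L}_2(F)$ then follows because $H$ normalises $\mathrm{PSL}_2(F)$ and the outer automorphism group of $\mathrm{PSL}_2$ is generated by diagonal and field automorphisms.

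The hardest step is the recognition in case (3): showing the socle is precisely $\mathrm{PSL}_2(F)$ rather than some other pseudofinite simple group of $SU$-rank three. This requires both (a) producing the field definably from the action, which means extracting a group configuration from generic triples and verifying Hrushovski's axioms in the supersimple pseudofinite setting, and (b) a Borovik--Cherlin-style recognition of $\mathrm{PSL}_2$ from an action on a rank-one set with rank-one two-point stabiliser, transferred from the finite Morley rank world to the finite $SU$-rank world via pseudofinite counting. A secondary subtlety, flagged by the footnote, is handling field automorphisms induced by $H$: one must check which automorphisms of a pseudofinite field of $SU$-rank one can actually arise from the permutation action, and rule out or correctly book-keep any extra ones.
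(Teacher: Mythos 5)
Your proposal has the right high-level shape (rank arithmetic, field interpretation, recognition of $\text{PSL}_2$), but two of the load-bearing steps are not correct as stated, and the overall strategy differs substantially from the argument actually used (both in \cite{Elwes} and in the generalization proved in this paper).

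First, the rank bound. You treat $SU(G_x)\le 2$ as a consequence of a "generic-transitivity / tower of stabilisers collapses" argument from primitivity alone. This is not available: in the supersimple pseudofinite setting there is no a priori reason the tower $G_{x_1,\dots,x_k}$ should terminate after three steps, and primitivity does not force it. In both \cite{Elwes} and the present paper the logic runs in the opposite direction: one first identifies the socle as a specific group (an abelian group acting regularly, or $\text{PSL}_2(F)$), and only \emph{then} reads off that $SU(G)\le 3$ as a corollary (cf.\ Theorem~\ref{thm-dim3}, where $\mathsf{dim}(G)=3$ is a conclusion, not a hypothesis). Without the prior identification of the socle you have no bound.

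Second, the recognition of $\text{PSL}_2$. You propose a Borovik--Cherlin-style recognition from the rank data of the two-point stabiliser, "transferred" from the finite-Morley-rank world. No such transfer exists off the shelf, and the actual argument is finite-group-theoretic: the socle is shown (using Lemma~\ref{lem-boundrank}, Wilson's theorem \cite{wilson1995simple}, and Ryten's thesis \cite{ryten2007model}) to be a simple pseudofinite group of fixed Lie type and bounded rank; Seitz's theorem \cite{seitz1974small} forces the point stabiliser to sit in a parabolic; bi-interpretability with a (difference) field plus a counting argument (Fact~\ref{fact-psl2} on $|G(q):P(q)|$) then singles out $\text{PSL}_2$. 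This is CFSG-dependent and has no analogue in the group-configuration world. Similarly, the field in the rank-$2$ case is produced not by a group-configuration argument but by taking the ring of endomorphisms of the abelian normal subgroup generated by a finite-by-abelian piece of the stabiliser and invoking Wedderburn's little theorem via pseudofiniteness (Theorem~\ref{thm2D} and Lemma~\ref{lem5.0}) --- considerably more elementary than extracting a group configuration in a supersimple theory. Finally, your argument for definability of the socle (a DCC on definable normal subgroups) does not address the real difficulty, which is that the minimal normal subgroups are not a priori definable; the paper handles this by passing to the ultraproduct of the finite socles and using the Indecomposability Theorem inside the supersimple pure-group theory of the simple factor (Lemma~\ref{lemT}). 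In short: the proposal replaces the CFSG/finite-group core of the argument with finite-Morley-rank technology that does not apply here, and the rank bound is asserted rather than obtained.
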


This result is based on the investigation of pseudofinite groups of small $SU$-rank in the same paper \cite{Elwes}. Basically, they showed that pseudofinite groups of $SU$-rank 1 are finite-by-abelian-by-finite, and those of $SU$-rank 2 are soluble-by-finite. We list them here.

\begin{fact}\label{fact-finitebyabelian}(\cite[Lemma 3.1(i)]{Elwes}) Let $G$ be an infinite group definable in a supersimple theory $T$ such that $T^{eq}$ eliminates $\exists^{\infty}$. Let $H\leq G$ be an infinite finite-by-abelian subgroup. Then $H$ is contained in an infinite definable finite-by-abelian subgroup $K\leq G$.
\end{fact}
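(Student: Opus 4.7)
My plan is to combine the Neumann--Schur and Neumann BFC theorems with the chain condition on uniformly definable families of subgroups available in simple theories. First, by Neumann--Schur, $H$ being finite-by-abelian is equivalent to $Z:=Z(H)$ having finite index in $H$; in particular $Z$ is an infinite abelian subgroup of $G$, and $H\subseteq C_G(Z)$. Applying the Baldwin--Saxl-style chain condition for uniformly definable families of subgroups in simple theories to $\{C_G(z):z\in Z\}$, I would extract finitely many $z_1,\dots,z_n\in Z$ such that the definable subgroup
\[ K_0:=\bigcap_{i=1}^n C_G(z_i) \]
contains the type-definable centralizer $C_G(Z)$ with finite index. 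Then $K_0$ is an infinite definable overgroup of $H$, and every $z\in Z$ has $K_0$-conjugacy class of size at most $m:=[K_0:C_G(Z)]$.

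By elimination of $\exists^\infty$ in $T^{eq}$, applied to the definable family of conjugacy classes $\{x^{K_0}:x\in K_0\}$, there is a uniform bound $N$ such that for each $x\in K_0$ either $|x^{K_0}|\le N$ or $x^{K_0}$ is infinite; hence the FC-center
\[ FC(K_0)=\{x\in K_0:|x^{K_0}|\le N\} \]
is a \emph{definable} subgroup of $K_0$, and by Neumann's BFC theorem $[FC(K_0),FC(K_0)]$ is finite, so $FC(K_0)$ is itself finite-by-abelian. Since $Z\subseteq FC(K_0)$ by construction, this already gives a definable finite-by-abelian envelope of $Z$.

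The remaining task, which is the main obstacle, is to ensure that $H$ itself --- not merely its center $Z$ --- lies inside the finite-by-abelian definable subgroup we end with. For $h\in H$ this requires $[K_0:C_{K_0}(h)]<\infty$, which is not automatic. To handle it I would refine the construction by also controlling commutators modulo the finite subgroup $F:=[H,H]$: choosing coset representatives $h_1,\dots,h_d$ of $Z$ in $H$, the set $D_{h_j}:=\{g\in N_G(F):[g,h_j]\in F\}$ is a definable subgroup of $N_G(F)$ containing $H$, and the intersection
\[ K:=K_0\cap N_G(F)\cap C_G(F)\cap\bigcap_{j=1}^d D_{h_j} \]
is then a definable overgroup of $H$ in which, after modding out by the central finite $F$, each $\bar h_j$ becomes centralized. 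A coordinated application of the chain condition --- again using elim $\exists^\infty$ to convert type-definable finite-index information into genuine definability --- yields $H\subseteq FC(K)$, and $FC(K)$ is the required envelope. The delicate point is this coordination, ensuring the orbit-size bound propagates from $Z$ to all of $H$.
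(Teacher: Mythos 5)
The first step of your argument is incorrect, and the whole construction rests on it. You assert that ``by Neumann--Schur, $H$ being finite-by-abelian is equivalent to $Z:=Z(H)$ having finite index in $H$.'' Only one direction of this holds: Schur's theorem gives $[H:Z(H)]<\infty \Rightarrow H'$ finite, but the converse fails. An infinite extraspecial $p$-group $E$ has $E'=Z(E)$ of order $p$ while $[E:Z(E)]$ is infinite, so $E$ is infinite and finite-by-abelian with finite center. The correct companion to Schur here is B.~H.~Neumann's BFC theorem: $H'$ is finite if and only if $H$ has \emph{uniformly bounded} conjugacy classes (indeed $[H:C_H(h)]\le|H'|$ for all $h$). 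Your proof uses the infinitude of $Z(H)$ and the finiteness of the coset set $H/Z(H)$ in an essential way (to get the initial centralizer intersection $K_0$ and to choose the finitely many representatives $h_1,\dots,h_d$), so it does not cover the case where $Z(H)$ is finite, and there is no obvious local repair. A secondary issue: you intersect with $C_G(F)$ where $F=H'$, but $H\subseteq C_G(H')$ is false in general (e.g.\ $H=S_3\times\mathbb{Z}$ has $H'=A_3$ not centralized by $H$), so that intersection would already exclude $H$; dropping it is necessary.

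The standard route, which your sketch is circling but not landing on, works directly with BFC. From $[H:C_H(h)]\le n:=|H'|$ for all $h\in H$, apply the $\widetilde{\mathfrak{M}_c}$-condition (Baldwin--Saxl in the simple setting) to the family $\{C_G(h):h\in H\}$ to get $h_1,\dots,h_k\in H$ and a bound $d$ with $[D_0:D_0\cap C_G(h)]\le d$ for all $h\in H$, where $D_0:=\bigcap_i C_G(h_i)$. Since $[H:H\cap D_0]\le n^k$, and since the $H$-conjugates $D_0^h$ are again of this form (as $h_i^h\in H$), the family $\{D_0^h:h\in H\}$ is uniformly commensurable; Schlichting gives a definable $D_1$ commensurable with $D_0$ and normalized by $H$. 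Then $D:=D_1 H=D_1\cup D_1a_2\cup\cdots\cup D_1a_s$ (for coset representatives $a_j$ of $H\cap D_1$ in $H$) is a \emph{definable} group containing $H$, with $[D:C_D(h)]$ uniformly bounded for $h\in H$. Finally, $\widetilde Z(D)=\{g\in D:[D:C_D(g)]<\infty\}$ is a subgroup containing $H$; by $\widetilde{\mathfrak{M}_c}$ together with elimination of $\exists^\infty$ it is definable (there is a finite-index $T\le D$ and a bound $m$ with $\widetilde Z(D)=\{g\in D:[T:C_T(g)]\le m\}$), and the resulting uniform bound $[\widetilde Z(D):C_{\widetilde Z(D)}(g)]\le [D:T]\cdot m$ makes $\widetilde Z(D)$ BFC, hence finite-by-abelian by Neumann's theorem. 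This $K:=\widetilde Z(D)$ is the required envelope. The moral is that the definable envelope is built from bounded conjugacy classes, not from the center.
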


\begin{fact}\label{fact-solublebyfinite}(\cite[Theorem 1.2]{Elwes}) Let $G$ be a pseudofinite group definable in a supersimple theory $T$ such that $T^{eq}$ eliminates $\exists^{\infty}$. Suppose $SU(G)=2$. Then $G$ is soluble-by-finite.
\end{fact}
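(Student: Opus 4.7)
The plan is to show that $G$ has a definable normal soluble subgroup of finite index, following the strategy of \cite{Elwes}. First I would pass to the connected component $G^{0}$, the intersection of all definable subgroups of finite index, which exists and has finite index by the chain condition on uniformly definable subgroups in a supersimple theory; so I may assume $G = G^{0}$. Let $R$ be the maximal definable normal soluble subgroup of $G$; this exists because definable soluble subgroups of bounded derived length satisfy a chain condition in our setting, and Fact \ref{fact-finitebyabelian} furnishes definable hulls for infinite finite-by-abelian subgroups. It suffices to show that $\bar{G} := G/R$ is finite.

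Suppose for a contradiction that $\bar{G}$ is infinite. Then $\bar{G}$ is pseudofinite, definable, $SU(\bar{G}) \in \{1,2\}$, and $\bar{G}$ has no infinite definable normal soluble subgroup. The case $SU(\bar{G}) = 1$ is handled by the rank-one analogue: any pseudofinite group of $SU$-rank $1$ is finite-by-abelian-by-finite, a consequence of Fact \ref{fact-finitebyabelian} combined with the usual generic-element analysis of centralizers in a pseudofinite simple theory; this directly produces an infinite normal abelian subgroup of $\bar G$ and contradicts the choice of $R$. So I may assume $SU(\bar{G}) = 2$.

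The core step is to extract from $\bar{G}$ an infinite interpretable simple pseudofinite section $S$. Since $\bar{G}$ has no infinite normal definable soluble subgroup, a socle argument (using chain conditions on definable subgroups and on centralizers, together with Fact \ref{fact-finitebyabelian} to upgrade to definable subgroups) exhibits an interpretable infinite simple pseudofinite group $S$ as a quotient of a minimal infinite normal section of $\bar{G}$ by its center. By the Wilson--Felgner--Ryten classification of simple pseudofinite groups, $S$ is elementarily equivalent to a (possibly twisted) Chevalley group $X(F)$ over an interpretable pseudofinite field $F$. Since $F$ has $SU$-rank $1$ and the smallest such $X$, namely $\mathrm{PSL}_{2}$, already satisfies $SU(\mathrm{PSL}_{2}(F)) = 3$, we obtain $SU(S) \geq 3$. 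But $S$ is interpretable in $\bar{G}$, whence $SU(S) \leq SU(\bar{G}) = 2$, a contradiction.

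The main obstacle is the semisimple extraction step: in this context, without finite Morley rank or $\omega$-stability, one must argue carefully that a pseudofinite group of $SU$-rank $2$ with no infinite normal soluble subgroup genuinely interprets an infinite simple pseudofinite group, and that the $SU$-rank bound passes to this interpretable section. Once that is in place, the CFSG-dependent classification of simple pseudofinite groups finishes the argument cleanly by forcing the forbidden rank $\geq 3$.
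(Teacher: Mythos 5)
The paper states this as a Fact cited from \cite[Theorem 1.2]{Elwes} and does not reprove it, so the relevant comparison is against the argument in \cite{Elwes}. Your high-level plan --- quotient by a maximal definable normal soluble subgroup, dispatch the residual rank-$1$ case via the rank-$1$ structure theorem, and in the residual rank-$2$ case produce an interpretable simple pseudofinite section and invoke Wilson's classification together with the lower bound $SU(\mathrm{PSL}_2(F))=3$ --- is the right outline and is essentially the route taken in \cite{Elwes} (via CFSG through Wilson's theorem). You correctly flag the ``semisimple extraction'' as the crux.

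But the extraction step as you describe it does not work as stated, and a proof must deal with several points you skip. First, in an ultraproduct $\bar G=\prod G_i/\mathcal U$ the socle $\prod\mathrm{soc}(G_i)/\mathcal U$ is not a priori definable; showing it and a simple direct factor are definable takes the Indecomposability Theorem for supersimple finite-rank groups together with a uniform bound on the number of simple factors (compare Lemmas~\ref{lem-boundrank} and~\ref{lemT} of the present paper, which do exactly this under the $\widetilde{\mathfrak{M}_c}$-condition). Second, when nonabelian, the socle is a direct product of copies of a nonabelian simple group and is centerless, so ``quotient of a minimal normal section by its center'' is the wrong construction; you must isolate a single simple direct factor $T$ and argue that $T$ is definable and pseudofinite. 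Third, you need to rule out the degenerate alternative in which the simple factors have uniformly bounded order. Finally, your concluding inequality tacitly compares two different $SU$-ranks: $SU(S)\geq 3$ comes from Ryten's computation in the pure group language, while $SU(S)\leq SU(\bar G)=2$ is computed in $\mathrm{Th}(\bar G)$. The clean fix is Ryten's uniform bi-interpretability: $S$ is bi-interpretable (already in the pure group language, hence inside $\mathrm{Th}(\bar G)$) with a pseudofinite (difference) field $F$, so $SU_{\bar G}(S)\geq 3\cdot SU_{\bar G}(F)\geq 3$, directly contradicting $SU_{\bar G}(S)\leq 2$. With these points filled in the argument is sound; note also that a CFSG-free proof of the generalized statement exists (\cite[Theorem 13]{Mcgroups}), but it proceeds quite differently from what you sketched.
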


The analysis of pseudofinite groups of small $SU$-rank has been generalised in \cite{Wagner} to a wider context which includes the pseudofinite supersimple and superrosy groups of infinite rank. Basically, Wagner replaces finite $SU$-rank by an abstract dimension which satisfies some nice properties, together with some chain condition on centralizers. 

Model-theoretically tame theories can often be viewed or defined in more than one way. For example, we can define tame theories as those who have a well-behaved independence relation, which is often attained by forking independence. One of the other definitions requires a well-behaved dimension, for example the Morley-rank, Lascar-rank, $SU$-rank and so on. Interestingly, the existence of a nice independence relation or dimension is often related to abstract combinatoric properties that a theory should not be able to define, for example, the independence property or the strict order property.

The generalization in \cite{Wagner} tries to capture model-theoretic tameness in a more abstract and unified way. 
The aim of introducing an abstract dimension is to unify several different dimension-like objects in tame theories, for example the Lascar or $SU$-rank in stable and simple theories, the o-minimality dimension and the pseudofinite counting dimension. On the other hand, the chain condition on centralizers focuses more on the combinatoric properties that a tame theory should have. This condition itself decreases the complexity of groups and gives some nice structural theorems for definable subgroups (see \cite{Hempel} and \cite{Hempel16} for more details). However, classical tame model theory usually has more powerful well-developed tools for analysis, for example the Indecomposability Theorem in supersimple theories. It is extensively used in \cite{Elwes}. We state the version for supersimple finite $SU$-rank groups here.

\begin{fact}\label{fact-indecomp}(Indecomposability Theorem, \cite[Theorem 5.4.5]{Wagner-Supersimple}) Let G be a group definable in a supersimple finite $SU$-rank theory and $\{X_i:i\in I\}$ be a (possibly infinite) collection of definable subsets of $G$. Then there exists a definable subgroup $H$ of $G$ such that
\begin{enumerate}
\item
$H\leq \langle X_i, i\in I\rangle$, and there are $i_0,\ldots,i_n\in I$ such that $H\leq X_{i_0}^{\pm 1}\cdots X_{i_n}^{\pm 1}$;
\item
$X_i/H$ is finite for each $i\in I$.
\end{enumerate}
Moreover, if the collection $\{X_i:i\in I\}$ is setwise invariant under some group $\Sigma$ of definable automorphisms of $G$, then $H$ can be chosen to be $\Sigma$-invariant.
\end{fact}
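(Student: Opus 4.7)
The plan is to adapt Zilber's classical indecomposability argument from finite Morley rank to the supersimple finite $SU$-rank setting, with generic types and their stabilizers playing the role of irreducible components. First I would close the family $\{X_i:i\in I\}$ under inverses, which is harmless for the conclusion and preserves $\Sigma$-invariance. Throughout I then work with finite products $Y = X_{i_0}\cdots X_{i_n}$ drawn from the enlarged family; these also form a $\Sigma$-invariant collection.

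Next, I would choose such a product $Y$ of maximal $SU$-rank; finiteness of $SU$-rank ensures a maximum exists. Pick a generic $a \in Y$ over parameters defining the $X_i$, and set $H = \mathrm{Stab}(\mathrm{tp}(a))$, the left stabilizer of the generic type of $Y$. In an arbitrary simple theory this is only a type-definable subgroup, but under the finite-$SU$-rank hypothesis combined with elimination of $\exists^\infty$ the stabilizer is actually definable. Condition (1) is then immediate: for $h\in H$, both $a$ and $ha$ realize $\mathrm{tp}(a)$, hence $h = (ha)a^{-1} \in YY^{-1}$, and $YY^{-1}$ unfolds to a product of the prescribed form. The inclusion $H \leq \langle X_i : i\in I\rangle$ follows trivially.

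Condition (2) follows by maximality. If some $X_j$ met infinitely many cosets of $H$, then $X_j Y$ would split as an infinite union of pairwise distinct $H$-translates of a generic piece of $Y$, forcing $SU(X_j Y) > SU(Y)$ by the rank calculus in finite $SU$-rank (additivity and the Lascar inequalities). Since $X_j Y$ is itself of the prescribed product form, this contradicts the maximal choice of $Y$. For the $\Sigma$-invariance refinement, I would run the argument equivariantly: replace $Y$ by the product of its $\Sigma$-translates (still a product of the allowed form), so that $\mathrm{tp}(a)$ and hence $H$ become canonically $\Sigma$-invariant.

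The main obstacle is the promotion of $H$ from a type-definable to a genuinely definable subgroup. In a general simple theory one lands only in the type-definable realm; it is precisely finite $SU$-rank together with $\exists^\infty$-elimination that guarantees $H$ is cut out by a single formula. The accompanying bookkeeping, namely arranging parameters so that $H$ is defined over a set compatible with $\Sigma$-invariance and verifying the rank estimates underlying the maximality contradiction, is technically delicate but follows standard supersimple-group machinery as developed by Wagner.
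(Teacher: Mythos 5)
The paper cites this result as Theorem~5.4.5 of Wagner's \emph{Simple Theories} and does not reproduce the proof, so there is no internal argument to compare against; I can only assess your sketch on its own merits. The Zilber-style skeleton you use — maximal-rank finite product $Y$, generic type $p$ of $Y$, stabilizer $H=\mathrm{Stab}(p)$, inclusion $H\leq YY^{-1}$ via $h=(ha)a^{-1}$, and maximality forcing each $X_j/H$ finite — is indeed the natural adaptation, and the rank calculation for condition~(2) (disjoint $H$-translates of the generic locus of $Y$ inside $X_jY$ pushing $SU(X_jY)$ above $SU(Y)$) is essentially correct modulo care about why distinct $H$-cosets give disjoint translates.

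There are, however, two genuine gaps. The central one is the definability of $H$. You rest the whole argument on the assertion that in supersimple finite $SU$-rank the stabilizer "is actually definable," and your proposed justification invokes elimination of $\exists^\infty$, which is \emph{not} a hypothesis of the statement — the Fact assumes only supersimplicity and finite $SU$-rank. A priori $\mathrm{Stab}(p)$ is only type-definable (an intersection of a descending chain of definable subgroups); finite $SU$-rank forces the ranks along that chain to stabilize, but without further input it does not force the chain itself to terminate, so definability of the stabilizer is not immediate. And you cannot simply replace $\mathrm{Stab}(p)$ by a definable supergroup of the same rank, because then the containment $H\leq YY^{-1}$ from $h=(ha)a^{-1}$ evaporates — that identity only covers honest stabilizer elements. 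Producing a genuinely definable $H$ that still lives inside a finite product is exactly the hard content of the theorem, and your sketch explicitly defers it. Secondly, the $\Sigma$-invariance step as written fails whenever $\Sigma$ is infinite: "the product of the $\Sigma$-translates of $Y$" is then an infinite product, hence neither definable nor of the required finite form for condition~(1). That refinement needs a different mechanism — for instance, showing the $\Sigma$-conjugates of $H$ are uniformly commensurable and applying Schlichting's theorem, or arguing that once $H$ is definable and canonically attached to the family it is automatically $\Sigma$-invariant — rather than a literal enlargement of $Y$.
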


In this paper, we generalize Fact \ref{Elwes-main} to the same context as in \cite{Wagner}, which in particular includes the pseudofinite definably primitive permutation groups in supersimple or superrosy theories of infinite rank. Interestingly, as we do not assume supersimplicity of the ambient theory, the Indecomposability Theorem is not available. However, in one main step of the proof, we go to a subgroup of the permutation group, whose theory in the pure group language is supersimple. Via this, we use the powerful structural theorems in supersimple theories to get the desired result.

Let us introduce the general context that we will work with and state our main theorem. 

\begin{definition}
A \textsl{dimension} on a theory $T$ is a function $\mathsf{dim}$ from all non-empty interpretable subsets of a monster model to $\mathbb{R}^{\geq 0}\cup\{\infty\}$, satisfying:
\begin{enumerate}
\item
Invariance: If $a\equiv a'$, then $\mathsf{dim}(\varphi(x,a))=\mathsf{dim}(\varphi(x,a'))$;

\item
Algebraicity: If $X$ is finite, then $\mathsf{dim}(X)=0$;

\item
Union: $\mathsf{dim}(X\cup Y)=\textsf{max}\{\mathsf{dim}(X),\mathsf{dim}(Y)\}$;

\item
Fibration: If $f:X\to Y$ is a surjective interpretable function and $\mathsf{dim}(f^{-1}(y))\geq r$ for all $y\in Y$, then $\mathsf{dim}(X)\geq\mathsf{dim}(Y)+r$;
\end{enumerate}

We define the dimension of a tuple of elements $a$ over a set $B$ as $$\mathsf{dim}(a/B):=\inf\{\mathsf{dim}(\varphi(x)):\varphi\in \text{tp}(a/B)\}.$$

When the equation $\mathsf{dim}(a,b/C)=\mathsf{dim(a/b,C)+\mathsf{dim}(b/C)}$ holds for any tuples $a,b$ and any set $C$, we say that the dimension $\mathsf{dim}$ is \textsl{additive}.

When $\mathsf{dim}$ has its range in $\mathbb{N}$ then we say that the dimension $\mathsf{dim}$ is \textsl{integer-valued}.

\end{definition}

\begin{remark}\label{remark-dim}
In our definition, if a dimension is integer-valued, then it cannot take the value $\infty$. Note this is different with the definition in \cite{Wagner}.

In pseudofinite structures there is a class of counting dimensions, called \emph{coarse dimensions}. They satisfy all the conditions for the dimension we defined above (possibly in an expansion of the language to insure invariance). They are additive (in a certain expansion of the language), but not necessarily integer-valued.

Another family of examples of dimensions is the following. Take a superstable (or supersimple, or superrosy) theory, suppose $rk(T)=\omega^\alpha\cdot n+\beta$ for some ordinals $\alpha,\beta$ with $\beta<\omega^{\alpha}$ and some integer $n$, where $rk$ is Lascar, $SU$ or thorn-rank. Then for any interpretable set $X$, define $\mathsf{dim}(X):=k$ if $rk(X)=\omega^{\alpha}\cdot k+\gamma$ for some $k\in\mathbb{N}$ and $\gamma<\omega^\alpha$. With this definition, $\mathsf{dim}$ is an additive integer-valued dimension.

Note that in the definition of a dimension, it is not required that dimensional $0$ sets are finite. In fact, in the examples above where the dimension comes from the coefficient of $\omega^\alpha$ of Lascar/SU/thorn-rank with $\alpha\neq 0$, we will always have infinite definable sets of dimension $0$. This is one of the major difficulties in generalizing Fact \ref{Elwes-main}, \ref{fact-finitebyabelian} and \ref{fact-solublebyfinite}.
\end{remark}

\begin{definition}
Let $G$ be a group. We say that $G$ satisfies the \textsl{$\widetilde{\mathfrak{M}_c}$-condition} or $G$ is an \textsl{$\widetilde{\mathfrak{M}_c}$-group} if the following holds: 
 $$\exists d\in\mathbb{N}, \forall g_0,\cdots,g_d\in G,~\bigvee_{i<d}\left([C_G(g_0,\cdots,g_i): C_G(g_0,\cdots,g_{i+1})]\leq d\right).$$
\end{definition}

\begin{remark}\label{remark-mc}  By \cite[Theorem 4.2.12, Proposition 4.4.3]{Wagner-Supersimple}, all interpretable groups in simple theories satisfy the $\widetilde{\mathfrak{M}_c}$-condition. 
\end{remark}

Here is the generalization of Fact \ref{fact-finitebyabelian} and \ref{fact-solublebyfinite} in \cite{Wagner}.

\begin{fact}\label{fact-frank-finitebyabelian}(\cite[Theorem 4.11, Corollary 4.14]{Wagner}) Let $G$ be a pseudofinite $\widetilde{\mathfrak{M}_c}$-group with an additive dimension $\mathsf{dim}$ such that $\mathsf{dim}(G)>0$. Then the following holds.
\begin{enumerate}
\item
$G$ has a definable finite-by-abelian subgroup $C$ with $\mathsf{dim}(C)>0$.
\item
If $\mathsf{dim}$ is integer-valued and $\mathsf{dim}(G)=1$, then $G$ has a definable characteristic finite-by-abelian subgroup $C$ such that $\mathsf{dim}(C)=1$.
\end{enumerate} 
\end{fact}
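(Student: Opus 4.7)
The plan is to combine the pseudofinite presentation $G = \prod_{i} G_i/\mathcal{U}$, the $\widetilde{\mathfrak{M}_c}$-condition, and additivity of $\mathsf{dim}$ to produce a definable finite-by-abelian subgroup of positive dimension, ultimately via a Neumann/BFC-style argument applied inside the finite approximations.

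For part (1), I would aim to locate inside $G$ a definable subgroup $H$ of positive dimension whose conjugacy classes are uniformly bounded, and then invoke the ultraproduct form of Neumann's BFC theorem to conclude that $H$ (or a definable subgroup thereof) is finite-by-abelian. To build $H$, I start with elements $g_0,\ldots,g_d$ chosen to have centralizers of maximal $\mathsf{dim}$, where $d$ is the constant from the $\widetilde{\mathfrak{M}_c}$-condition, and exploit the guaranteed index bound $[C_G(g_0,\ldots,g_i):C_G(g_0,\ldots,g_{i+1})]\leq d$ at some step $i$. Additivity of $\mathsf{dim}$, applied through the orbit-stabilizer identity $\mathsf{dim}(g^G)+\mathsf{dim}(C_G(g)) = \mathsf{dim}(G)$, controls the dimensions of the intermediate centralizers and lets me isolate a definable positive-dimensional subgroup on which the conjugation action is ``almost trivial'' in both a dimension-theoretic and a counting sense.

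For part (2), integer-valuedness of $\mathsf{dim}$ together with $\mathsf{dim}(G) = 1$ and $\mathsf{dim}(C) > 0$ immediately force $\mathsf{dim}(C) = 1$. To upgrade $C$ to a characteristic subgroup, I would consider the family $\mathcal{F}$ of all definable finite-by-abelian subgroups of dimension $1$. Using the $\widetilde{\mathfrak{M}_c}$-condition, any two members of $\mathcal{F}$ are commensurable, so their product lies again in $\mathcal{F}$; the resulting maximal element (well-defined up to finite index) is then automatically invariant under all definable automorphisms and hence characteristic.

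The main obstacle, as stressed in Remark \ref{remark-dim}, is that a set of $\mathsf{dim}$-zero need not be finite, so the implication ``$\mathsf{dim}(g^G)$ small $\Rightarrow$ $g^G$ bounded'' is not available, and Neumann's BFC theorem does not apply at the level of the monster model alone. Bridging this gap requires genuine pseudofinite counting: one must work concretely with the finite groups $G_i$ in the ultraproduct presentation, use the interplay between $\mathsf{dim}$ and the counting measure to extract uniform finite bounds on the sizes of conjugacy classes inside the $G_i$, and only then pass to the ultraproduct to obtain an honest finite-by-abelian definable subgroup of $G$. This pseudofinite translation, combined with the chain bound from $\widetilde{\mathfrak{M}_c}$ and the orbit-stabilizer computation above, is what I expect to be the technical heart of the proof.
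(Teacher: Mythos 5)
This statement is quoted in the paper as a Fact (citing \cite{Wagner}) and is not proved there, so there is no in-paper proof to compare against; I can only assess your sketch on its own terms. Your framing does get several things right: Neumann's BFC theorem is indeed the tool for producing finite-by-abelian subgroups, the $\widetilde{\mathfrak{M}_c}$-chain applied to tuples with ``large'' centralizers is exactly how one renders the set $\{g\in G:\mathsf{dim}(g^G)=0\}$ definable (this is precisely Lemma~\ref{claim-0dimension} of the present paper), and you correctly flag that the failure of ``$\mathsf{dim}=0\Rightarrow$ finite'' is the central obstruction that forces genuine pseudofinite counting.

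The gap in part (1) is that nothing in your construction forces the definable subgroup you build to have positive dimension. Greedily choosing $g_0,\ldots,g_d$ with centralizers of maximal $\mathsf{dim}$ and invoking the $\widetilde{\mathfrak{M}_c}$-bound produces a definable $T=C_G(g_0,\ldots,g_i)$ and a definable set on which $T$-conjugacy classes have size at most $d$, but that set and its derived subgroup can both easily end up with $\mathsf{dim}=0$ (e.g.\ in $F^+\rtimes F^\times$ the intersection of two such ``maximal'' centralizers is already trivial). The positivity of $\mathsf{dim}(C)$ is precisely the ``technical heart'' you allude to, and your sketch does not supply the argument; saying ``use the interplay between $\mathsf{dim}$ and the counting measure'' is not enough, since $\mathsf{dim}$ is an abstract dimension satisfying only the listed axioms and has no a priori quantitative relation to cardinality in the $G_i$'s. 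This is a missing idea, not a missing detail.

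In part (2), the claim that any two definable wide finite-by-abelian subgroups are commensurable is false. If $\mathsf{dim}(G)=1$ and $A,B\leq G$ are definable with $\mathsf{dim}(A)=\mathsf{dim}(B)=1$, additivity only gives $\mathsf{dim}(A/(A\cap B))=0$, which (as Remark~\ref{remark-dim} stresses) does not imply the index is finite; the $\widetilde{\mathfrak{M}_c}$-condition controls chains of \emph{centralizers}, not arbitrary finite-by-abelian subgroups. Consequently there is no reason for a maximal element of $\mathcal{F}$ to exist or to be well-defined up to finite index, and even if it were, ``the product lies again in $\mathcal{F}$'' needs a normality/finite-by-abelian-closure argument you haven't given. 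A cleaner route to a characteristic subgroup, suggested by Lemma~\ref{claim-0dimension}, is to take an invariantly defined set such as $\{g\in G:\mathsf{dim}(g^G)=0\}$, which is automatically stable under all automorphisms and is definable by that lemma; the remaining work is again the positivity of its dimension, which circles back to the gap in part (1).
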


\begin{fact}\label{fact-frank-soluble}(\cite[Theorem 5.1, Corollary 5.2]{Wagner}) Let $G$ be a pseudofinite $\widetilde{\mathfrak{M}_c}$-group with an additive integer-valued dimension $\mathsf{dim}$ such that $\mathsf{dim}(G)=2$. Then the following holds.
\begin{enumerate}
\item
$G$ has a definable finite-by-abelian subgroup $C$ such that $\mathsf{dim}(C)\geq 1$ and $\mathsf{dim}(N_G(C))=2$.
\item
If definable sections of $G$ also satisfy the $\widetilde{\mathfrak{M}_c}$-condition, then $G$ has a definable soluble subgroup $D$ with $\mathsf{dim}(D)=2$.
\end{enumerate}
\end{fact}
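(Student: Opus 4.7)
For part (1), I begin by applying Fact \ref{fact-frank-finitebyabelian}(1) to obtain a definable finite-by-abelian subgroup $C_0\leq G$ with $\mathsf{dim}(C_0)\geq 1$; since $\mathsf{dim}$ is integer-valued and $\mathsf{dim}(G)=2$, either $\mathsf{dim}(C_0)=2$ (and we take $C=C_0$, so $N_G(C)=G$) or $\mathsf{dim}(C_0)=1$. In the latter case, I want to replace $C_0$ by another definable finite-by-abelian subgroup whose normaliser has dimension $2$. My plan is to choose $C$ as a definable finite-by-abelian subgroup of dimension~$1$ that is maximal with respect to an ordering compatible with the $\widetilde{\mathfrak{M}_c}$-condition (for instance, maximising first $\mathsf{dim}(N_G(C))$ and then the index bound $d$ appearing in the $\widetilde{\mathfrak{M}_c}$-condition for $C_G(C)$). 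The key claim is then: if $\mathsf{dim}(N_G(C))<2$, then the conjugacy class $\{C^g:g\in G\}$ has positive relative dimension, and for a generic $g\in G$ the intersection $C\cap C^g$ has strictly smaller dimension than $C$, while the centraliser $C_G(C\cap C^g)$ properly contains $C_G(C)$. A pseudofinite counting argument, combined with Fact \ref{fact-frank-finitebyabelian}(1) applied to $C_G(C\cap C^g)$, should produce a definable finite-by-abelian subgroup strictly enlarging $C$ or with strictly larger normaliser, contradicting maximality. Hence $\mathsf{dim}(N_G(C))=2$.

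For part (2), the argument is then a clean quotient step. Let $C$ be the subgroup from part (1), set $H=N_G(C)$, so $\mathsf{dim}(H)=2$ and $C\trianglelefteq H$. If $\mathsf{dim}(C)=2$, then $C$ itself is a definable soluble subgroup of the required dimension and I take $D=C$. Otherwise $\mathsf{dim}(C)=1$, and additivity of $\mathsf{dim}$ gives $\mathsf{dim}(H/C)=1$. The assumption that definable sections of $G$ satisfy $\widetilde{\mathfrak{M}_c}$ allows me to apply Fact \ref{fact-frank-finitebyabelian}(2) to the pseudofinite section $\bar H=H/C$, yielding a definable characteristic finite-by-abelian subgroup $\bar D\leq\bar H$ with $\mathsf{dim}(\bar D)=1$. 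Pulling back gives a definable subgroup $D\leq H$ fitting into a short exact sequence $1\to C\to D\to\bar D\to 1$; since both $C$ and $\bar D$ are soluble, so is $D$, and by additivity $\mathsf{dim}(D)=\mathsf{dim}(C)+\mathsf{dim}(\bar D)=2$.

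The main obstacle lies entirely in part (1), specifically in the step upgrading a finite-by-abelian subgroup of dimension~$1$ to one whose normaliser reaches dimension~$2$. In the supersimple finite-rank setting of \cite{Elwes}, the Indecomposability Theorem (Fact \ref{fact-indecomp}) assembles a family of conjugates of $C$ into a single definable subgroup, which is the standard way to realise such an enlargement. Here indecomposability is unavailable, so the $\widetilde{\mathfrak{M}_c}$-condition must do all the work: it has to force the chain of centralisers along a carefully chosen increasing family of conjugates to stabilise in a bounded number of steps and deliver a definable finite-by-abelian enlargement of $C$. Making this substitute for indecomposability precise, while preserving the finite-by-abelian property when passing from an infinite union of conjugates to an interpretable subgroup, is the delicate combinatorial core of the proof and the step I expect to require the most care.
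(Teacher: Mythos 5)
Note first that the paper does not prove this statement: it is a cited background result (labelled as a Fact), quoted from \cite{Wagner}, and the remark immediately following it in the paper points the reader to \cite{Wagner} and to \cite{Mcgroups} for the actual argument. So there is no internal proof against which to check you; what follows is an assessment of the proposal on its own merits and against what the paper reveals about the cited argument.

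Your part~(2) is a correct derivation from part~(1). With $H:=N_G(C)$ of dimension $2$, the case $\mathsf{dim}(C)=2$ is immediate, and otherwise $\mathsf{dim}(H/C)=1$, the hypothesis on definable sections lets you apply Fact~\ref{fact-frank-finitebyabelian}(2) to $H/C$, and the definable pullback of the resulting finite-by-abelian subgroup is soluble of dimension~$2$ by additivity. This is exactly the standard quotient step and there is nothing to object to.

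Part~(1) is not a proof but a plan, as you acknowledge. Two concrete gaps: (i) the phrase \emph{maximal with respect to an ordering compatible with the $\widetilde{\mathfrak{M}_c}$-condition} is not defined, and $\widetilde{\mathfrak{M}_c}$ only gives a chain condition on centralisers up to bounded index, not on arbitrary definable finite-by-abelian subgroups, so the existence of a maximiser is itself unjustified; (ii) the key claim --- that for generic $g$ the intersection $C\cap C^g$ drops in dimension while $C_G(C\cap C^g)$ properly extends $C_G(C)$, producing via Fact~\ref{fact-frank-finitebyabelian}(1) a finite-by-abelian enlargement with wider normaliser --- is asserted, not argued; nothing in the sketch explains why the subgroup obtained from $C_G(C\cap C^g)$ would remain finite-by-abelian or why its normaliser would be wider. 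The actual construction, as the paper reveals when it \emph{uses} this Fact inside Lemma~\ref{lem2}, is different in kind: the subgroup $C$ is either already normal in $G$, or is taken to be $\widetilde Z(D)$ for a definable $D$ commensurable with a centraliser $C_G(\bar b)$, so the weight is carried by Schlichting's theorem and the almost-centre machinery rather than by a maximality-and-counting argument. Your sketch also does not engage with the paper's remark that Wagner's Theorem~5.1 uses the CFSG (with \cite{Mcgroups} giving an alternative under the stated hypotheses); an elementary route of the kind you envisage would need to be worked out in full before it could stand in for either.
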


\begin{remark} The proof of Fact \ref{fact-frank-soluble}, more precisely, of Theorem 5.1 in \cite{Wagner} uses the Classification of Finite Simple Groups (CFSG). But the assumption of Theorem 5.1 in \cite{Wagner} is slightly weaker than the one we stated. We refer to an earlier version of this proof, \cite[Theorem 13, Corollary 14]{Mcgroups}, which does not use the CFSG. 
\end{remark}

We recall the definition of a (definably) primitive permutation group.
\begin{definition}
A permutation group $G$ acting on a non-empty set $X$ is called \textsl{primitive} if $G$ acts transitively on $X$ and preserves no non-trivial partition of $X$. If $G$ is transitive and preserves no non-trivial definable partition of $X$, then $G$ is called \textsl{definably primitive}.
\end{definition}

\begin{remark}
A transitive permutation group $G$ is primitive if and only if any point stabilizer $\mathsf{Stab}_G(x):=\{g\in G: x^g=x\}$ is a maximal proper subgroup of $G$. Similarly, $G$ is definably primitive if and only if any $\mathsf{Stab}_G(x)$ is a definably maximal proper subgroup of $G$, that is there is no definable subgroup $D\leq G$ such that $\mathsf{Stab}_G(x)\lneq D\lneq G$.
\end{remark}

\begin{definition}
We define $\mathcal{S}$ to be the class of all pseudofinite definably primitive permutation groups $(G,X)$ with an additive integer-valued dimension $\mathsf{dim}$ such that $\mathsf{dim}(X)=1$, and such that $G$ satisfies the $\widetilde{\mathfrak{M}_c}$-condition.
\end{definition}

By Remark \ref{remark-dim} and Remark \ref{remark-mc}, $\mathcal{S}$ contains all pseudofinite definably primitive permutation groups $(G,X)$ in supersimple finite $SU$-rank theories such that $SU(X)=1$. The  aim of this paper is to get a classification of $\mathcal{S}$ similar to Fact \ref{Elwes-main}. It turned out that the restrictions on $\mathcal{S}$ are enough for us to classify members of $\mathcal{S}$ of dimension 1 and 2. We need more combinatorial assumptions for dimension greater or equal to 3, one of which is similar to the $\widetilde{\mathfrak{M}_c}$-condition but for stabilizers, and the other one is a minimality condition on $X$. . We list them here.

Notation: Let $G$ be a group acting on some structure $X$, for $x\in X$ we write $\mathsf{Stab}_G(x)$ for the point-stabilizer $\{g\in G:x^g=x\}$, and for $B\subseteq X$ we write $$\mathsf{PStab}_G(B):=\bigcap_{x\in B}\mathsf{Stab}_G(x)$$ as the point-wise stabilizer.

\begin{enumerate}
\item
$\widetilde{\mathfrak{M}_s}$-condition on $(G,X)$: There is $d\in\mathbb{N}$ such that
$$\forall x_0,\ldots,x_d\in X,\bigvee_{i<d}\left([\mathsf{PStab}_G(x_0,\ldots,x_i): \mathsf{PStab}_G(x_0,\ldots,x_{i+1})]\leq d\right).$$
\item
(EX)-condition on $X$:\\
$X$ contains no infinite set of 1-dimensional equivalence classes for any definable equivalence relation on $X$.
\end{enumerate}

\begin{remark}\label{remark-ms}  As the $\widetilde{\mathfrak{M}_c}$-condition, all interpretable groups in simple theories also satisfy the $\widetilde{\mathfrak{M}_s}$-condition, by \cite[Theorem 4.2.12, Proposition 4.4.3]{Wagner-Supersimple}.
\end{remark}

Now we are able to state our main result.

\begin{mainthm}
Let $(G,X)\in\mathcal{S}$.
\begin{enumerate}
\item
If $\mathsf{dim}(G)=1$, then $G$ has a definable normal abelian subgroup $A$, such that $\mathsf{dim}(A)=1$ and $A$ acts regularly on $X$.
\item
If $\mathsf{dim}(G)= 2$ and definable sections of $G$ satisfy the $\widetilde{\mathfrak{M}}_c$-condition. Then there is a definable subgroup $H\trianglelefteq G$ of dimension $2$, and an interpretable pseudofinite field $F$ of dimension $1$, such that $(H,X)$ is definably isomorphic to $(F^+\rtimes D,F^+)$ for some definable $D\leq F^\times$ of dimension $1$.
\item
If $\mathsf{dim}(G)\geq 3$. Suppose definable sections of $G$ satisfy the $\widetilde{\mathfrak{M}}_c$-condition, $G$ satisfies the $\widetilde{\mathfrak{M}}_s$-condition and $X$ satisfies the (EX)-condition. Then $\mathsf{dim}(G)= 3$ and there is a definable subgroup $D\leq G$ of dimension $3$ and an interpretable pseudofinite field $F$ of dimension $1$ such that $D$ is definably isomorphic to $\textup{PSL}_2(F)$ and $(G,X)$ is definably isomorphic to $(H, \textup{PG}_1(F))$, where $\textup{PSL}_2(F)\leq H\leq \textup{P}\Gamma\textup{L}_2(F)$.
\end{enumerate}

\end{mainthm}

The Main Theorem enables us to analyse the pseudofinite definably primitive permutation groups of infinite $SU$-rank, which is an immediate generalization of Fact \ref{Elwes-main}.

\begin{corollary}
Let $(G,X)$ be a pseudofinite definably primitive permutation group in a supersimple theory. Suppose $SU(G)=\omega^\alpha n+\gamma$ and $SU(X)=\omega^\alpha+\beta$ for some $\gamma,\beta<\omega^\alpha$ and $n\in\mathbb{N}$. Then one of the following holds:
\begin{enumerate}
\item
$SU(G)= \omega^\alpha+\gamma$, and there is a definable abelian subgroup $A$ of $SU$-rank $\omega^\alpha$ acting regularly on $X$. 
\item
$SU(G)=2$, and there is an interpretable pseudofinite field $F$ of $SU$-rank $1$ with $(G,X)$ definably isomorphic to $(F^+\rtimes H,F^+)$, where $H$ is a subgroup of $F^\times$ of finite index.
\item
$SU(G)=3$, and there is an interpretable pseudofinite field $F$ of $SU$-rank $1$ with $(G,X)$ definably isomorphic to $(\textup{PSL}_2(F), \textup{PG}_1(F))$ or $(\textup{PGL}_2(F),\textup{PG}_1(F))$.
\end{enumerate}
\end{corollary}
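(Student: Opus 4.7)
The plan is to recast the supersimple $SU$-rank data as an integer-valued dimension and invoke the Main Theorem. Following Remark \ref{remark-dim}, define $\mathsf{dim}(D):=k$ on interpretable sets $D$ by writing $SU(D)=\omega^\alpha k+\delta$ in Cantor normal form with $\delta<\omega^\alpha$; this is an additive integer-valued dimension with $\mathsf{dim}(X)=1$ and $\mathsf{dim}(G)=n$. Since $T$ is supersimple (hence simple), Remarks \ref{remark-mc} and \ref{remark-ms} give both $\widetilde{\mathfrak{M}_c}$ for $G$ and all of its definable sections, and $\widetilde{\mathfrak{M}_s}$ for the action $(G,X)$. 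In particular $(G,X)\in\mathcal{S}$, so we are ready to apply the Main Theorem.

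The three cases of the corollary correspond to $n=1$, $n=2$, and $n\geq 3$. If $n=1$, Main Theorem (1) produces a definable normal abelian subgroup $A$ with $\mathsf{dim}(A)=1$ acting regularly on $X$; the regular action yields a definable bijection $A\to X$ (after fixing a base point), so $SU(A)=SU(X)$, placing us in case (1). If $n=2$, Main Theorem (2) produces an interpretable pseudofinite field $F$ with $\mathsf{dim}(F)=1$. The additional input needed is the standard fact that any interpretable pseudofinite field in a supersimple theory has $SU$-rank $1$; combined with $\mathsf{dim}(F)=1$ (which forces $SU(F)\geq\omega^\alpha$), this gives $\omega^\alpha\leq 1$, hence $\alpha=0$, so $SU(G)=n=2$ and case (2) follows. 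If $n\geq 3$, Main Theorem (3) (modulo verifying (EX), see below) returns $\mathsf{dim}(G)=3$ together with an interpretable pseudofinite field $F$ of $\mathsf{dim}(F)=1$, again forcing $\alpha=0$ and $SU(G)=3$; the sharpening $H\leq\textup{PGL}_2(F)$ comes from Lemma \ref{lem-Btrivial} and Corollary \ref{thmnonsimple}, addressing the footnote of Fact \ref{Elwes-main}.

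The main obstacle is verifying the (EX)-condition on $X$ when $n\geq 3$. It does not follow from $SU(X)=\omega^\alpha+\beta$ by a purely rank-theoretic argument: if $E$ had infinitely many $1$-dimensional classes, then for $a$ generic in its class and $b=[a]$, the Lascar inequality only gives $SU(a)\geq\omega^\alpha+SU(b)$, and so the set $Y\subseteq X/E$ of $1$-dimensional classes satisfies $SU(Y)\leq\beta$, which is still compatible with $Y$ infinite. The resolution I anticipate exploits pseudofiniteness at the ultraproduct level: in a pseudofinite supersimple theory, the coefficient of $\omega^\alpha$ in $SU$-rank on a fixed sort is controlled by coarse counting dimension, so an infinite family of $\omega^\alpha$-rank classes would at the finite level require $X_i$ to be partitioned into too many pieces of equal counting dimension, contradicting $\mathsf{dim}(X)=1$. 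Once (EX) is established, the preceding paragraph completes case (3).
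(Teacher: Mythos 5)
Your plan correctly recasts the $SU$-rank data as a dimension and reduces to the Main Theorem via Remarks~\ref{remark-dim}, \ref{remark-mc}, \ref{remark-ms}; this much matches the paper's route. However, there are two genuine gaps, plus a smaller omission.

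First, the appeal to ``the standard fact that any interpretable pseudofinite field in a supersimple theory has $SU$-rank $1$'' does not work: that is not a theorem. A pseudofinite field is supersimple of $SU$-rank $1$ \emph{in the pure ring language}, but the induced structure coming from the ambient theory can be strictly richer, and nothing a priori bounds the $SU$-rank of $F$ computed in $(G,X)$. The paper has to work for this: it shows, via the sharpening in Theorem~\ref{thm-dim3} and Corollary~\ref{cor2}, that $(G,X)$ is in fact (bi-)interpretable with the pure field $F$ or difference field $(F,\sigma)$ together with a group $B$ of automorphisms, and then invokes Corollary~\ref{thmnonsimple} (that an infinite definable set of automorphisms of a pseudofinite field yields the strict order property) to force $B$ to be finite, hence trivial by Lemma~\ref{lem-Btrivial}. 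Only after killing $B$ is $(G,X)$ reduced to something definable in the pure (difference) field, whose $SU$-rank is genuinely $1$ (resp.\ finite), and only then can one conclude $\alpha=0$. The same mechanism handles the $n=2$ case: Theorem~\ref{thm-dim2} and Corollary~\ref{cor1} give $G_x\hookrightarrow F^\times$ once $B$ is shown trivial by Corollary~\ref{thmnonsimple}, and a separate cyclicity argument shows $G_x=(F^\times)^k$ has finite index; your proposal leaves the finite-index claim unaddressed.

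Second, your anticipated resolution of the (EX)-condition — comparing the coefficient of $\omega^\alpha$ to a coarse counting dimension — is not what works, and as stated it has no proof. The paper's Corollary~\ref{cor2} instead proves the sharper fact $SU(X)=\omega^\alpha$ directly, by a case split on the socle $H$: if $H$ is abelian, it produces a definable abelian normal $A$ and shows $SU(A)=\omega^\alpha$ via Lemma~\ref{lem-omegaalpha} (Schlichting's theorem plus the uniqueness up to commensurability of the $\omega^\alpha$-rank subgroup); if $H$ is non-abelian, it uses that the definable simple factor $T$ has $SU(T)=\omega^\alpha k$ (a supersimple structure theorem) and then pins down $SU(X)=\omega^\alpha$ via the Lascar inequalities. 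Once $SU(X)=\omega^\alpha$, (EX) follows immediately from Lascar, which is exactly the rank-theoretic argument you correctly observed would fail if one only knew $SU(X)=\omega^\alpha+\beta$ with $\beta$ possibly nonzero. Relatedly, in case $(1)$ you derive only $SU(A)=SU(X)=\omega^\alpha+\beta$, whereas the corollary asserts $SU(A)=\omega^\alpha$ exactly; that requires Lemma~\ref{lem-omegaalpha}, which you omit.
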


\begin{remark} Fact \ref{Elwes-main} uses the Classification of Finite Simple Groups for $SU$-rank greater or equal to 3, so do our results for dimension greater or equal to 3, in particular Section \ref{sec5} and Section \ref{sec6} uses the CFSG without mentioning it explicitly. 
\end{remark}

The rest of this paper is organised as the following. Section \ref{sec2} gives some general analysis of the basic properties of $\widetilde{\mathfrak{M}_c}$-groups with an additive integer-valued dimension. Section \ref{sec4} deals with pseudofinite definably primitive permutation groups of dimensions 1 and 2. The main results are Theorem \ref{thm-dim1} and Theorem \ref{thm-dim2}. Section \ref{sec5} handles the rest, i.e., permutation groups of dimension greater or equal to 3. The corresponding result is obtained in Theorem \ref{thm-dim3}. The last part, section \ref{sec6} studies the special case of pseudofinite definably primitive permutation groups in supersimple theories of infinite rank. Theorem \ref{thm-noInfiniteRank} concludes this section.

\section{$\widetilde{\mathfrak{M}_c}$-Groups with a Dimension}\label{sec2}
In this section we will first establish some  general results about $\widetilde{\mathfrak{M}_c}$-groups with an additive integer-valued dimension.

In the following lemmas, we assume that $\mathsf{dim}$ is an additive integer-valued dimension on a group $G$.

\begin{definition}
We say a subgroup $H\leq G$ is \textsl{broad} if $\mathsf{dim}(H)>0$. And we say $H$ is \textsl{wide in $G$} if $\mathsf{dim}(H)=\mathsf{dim}(G)$.
\end{definition}

\begin{lemma}\label{claim 1}
Let $H_0,\ldots,H_n$ be a finite family of wide definable subgroups of $G$. Then $\bigcap_{i\leq n}H_i$ is also wide in $G$.
\end{lemma}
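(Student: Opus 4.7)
\bigskip

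\noindent\textbf{Plan.} I would argue by induction on $n$, the case $n=0$ being trivial, so that everything reduces to the following claim: if $H$ and $K$ are wide definable subgroups of $G$, then $H\cap K$ is wide. The key tool is that additivity of $\mathsf{dim}$ (together with the fibration axiom) forces the equality version of fibration, namely that for a definable surjection $f\colon X\to Y$ whose fibers all have the same dimension $r$, one has $\mathsf{dim}(X)=\mathsf{dim}(Y)+r$. Indeed, picking a generic element on each side and applying $\mathsf{dim}(a,b/C)=\mathsf{dim}(a/bC)+\mathsf{dim}(b/C)$ gives both inequalities.

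\medskip

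\noindent Granting this, I would consider the multiplication map
$$
\mu\colon H\times K\longrightarrow HK,\qquad (h,k)\longmapsto hk.
$$
For any $g=h_0k_0\in HK$, the fiber $\mu^{-1}(g)$ consists of the pairs $(h_0t,\,t^{-1}k_0)$ with $t\in H\cap K$, so each fiber is in definable bijection with $H\cap K$ and in particular has dimension $\mathsf{dim}(H\cap K)$. Applying additivity to $\mu$ and to the product $H\times K$ yields
$$
\mathsf{dim}(H)+\mathsf{dim}(K)\;=\;\mathsf{dim}(H\times K)\;=\;\mathsf{dim}(HK)+\mathsf{dim}(H\cap K).
$$
Since $HK\subseteq G$, the union axiom (or monotonicity, which follows from it) gives $\mathsf{dim}(HK)\leq\mathsf{dim}(G)$. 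Using $\mathsf{dim}(H)=\mathsf{dim}(K)=\mathsf{dim}(G)$, this rearranges to
$$
\mathsf{dim}(H\cap K)\;\geq\;\mathsf{dim}(H)+\mathsf{dim}(K)-\mathsf{dim}(G)\;=\;\mathsf{dim}(G),
$$
while the reverse inequality is immediate from $H\cap K\subseteq G$. So $H\cap K$ is wide, completing the inductive step.

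\medskip

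\noindent The main subtlety is the passage from the type-theoretic additivity formula to the set-theoretic equality $\mathsf{dim}(X)=\mathsf{dim}(Y)+r$ used on the map $\mu$; this is standard but worth stating explicitly. A minor point is that $HK$ need not be a subgroup, which is why the argument goes through $H\times K$ rather than through, say, coset counting of $H\cap K$ in $H$. Aside from that the computation is routine.
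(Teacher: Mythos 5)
Your argument is correct, and it is a genuinely different route from the one the paper takes. The paper reduces to $n=1$ as you do, but then works entirely on the coset-space side: from $H_0,H_1$ wide it gets $\mathsf{dim}(G/H_0)=\mathsf{dim}(G/H_1)=0$, observes the definable injection $G/(H_0\cap H_1)\hookrightarrow G/H_0\times G/H_1$, concludes $\mathsf{dim}(G/(H_0\cap H_1))=0$, and then reads off $\mathsf{dim}(H_0\cap H_1)=\mathsf{dim}(G)$. You instead fibre the multiplication map $\mu\colon H\times K\to HK$ over the set $HK$ with fibres in bijection with $H\cap K$, giving $\mathsf{dim}(H)+\mathsf{dim}(K)=\mathsf{dim}(HK)+\mathsf{dim}(H\cap K)$ and hence the submodularity inequality $\mathsf{dim}(H\cap K)\geq \mathsf{dim}(H)+\mathsf{dim}(K)-\mathsf{dim}(G)$. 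These are two packagings of the same underlying computation (in fact the paper's coset injection yields the identical inequality after rearranging), and both rely on the same unstated upgrade: passing from the type-theoretic additivity axiom to the set-level statement that a definable surjection with fibres of constant dimension $r$ satisfies $\mathsf{dim}(X)=\mathsf{dim}(Y)+r$; the paper uses this silently via $\mathsf{dim}(G/H)=\mathsf{dim}(G)-\mathsf{dim}(H)$, and you do well to flag it explicitly. Your version has the small advantage of avoiding any mention of quotient objects and of proving the more quantitative submodularity bound along the way; the paper's version stays closer to the language of indices and coset spaces that it uses throughout the section.
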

\begin{proof}
It suffices to prove the claim when $n=1$, the rest follows by induction. By the properties of dimension, we have that $\mathsf{dim}(G/H_0)=\mathsf{dim}(G)-\mathsf{dim}(H_0)=0$. Similarly, $\mathsf{dim}(G/H_1)=0$. 

Note that there is a definable injection from $G/(H_0\cap H_1)$ to $G/H_0\times G/H_1$ sending $g(H_0\cap H_1)$ to $(gH_0,gH_1)$. Hence $\mathsf{dim}(G/(H_0\cap H_1))\leq \mathsf{dim}(G/H_0)+\mathsf{dim}(G/H_1)= 0$. We obtain $$\mathsf{dim}(H_0\cap H_1)=\mathsf{dim}(G)-\mathsf{dim}(G/(H_0\cap H_1))=\mathsf{dim}(G).\qedhere$$
\end{proof}

\begin{lemma}\label{claim 1.0}
Suppose $G$ is finite-by-abelian. Then for any $g_0,\ldots,g_n\in G$, the centralizer $C_G(g_0,\ldots,g_n)$ is wide in $G$.
\end{lemma}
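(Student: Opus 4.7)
The plan is to reduce the statement for an arbitrary tuple to the single-element case, and then exploit the fact that in a finite-by-abelian group every conjugacy class is finite.

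First I would observe that $G$ being finite-by-abelian means there is a finite normal subgroup $F \trianglelefteq G$ with $G/F$ abelian, so $[G,G] \leq F$ is finite. Consequently, for any fixed $g \in G$, the conjugacy class $g^G = \{h^{-1}gh : h \in G\}$ is finite, because $g^h = g \cdot [g,h]$ and $[g,h] \in [G,G]$ takes only finitely many values.

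Next I would analyse the definable map $\psi_g : G \to g^G$ given by $\psi_g(h) = h^{-1}gh$. This map is surjective by construction, and two elements $h_1, h_2$ have the same image iff $h_1 h_2^{-1} \in C_G(g)$, so the fibres of $\psi_g$ are exactly the right cosets of $C_G(g)$. Since $g^G$ is finite, $G$ decomposes as a finite disjoint union of cosets of $C_G(g)$, each of which is in definable bijection with $C_G(g)$ (via right translation by a representative). By the fibration axiom applied to such a bijection one gets equality of dimensions of a coset and of $C_G(g)$, and then the union axiom gives
\[
\mathsf{dim}(G) \;=\; \max_{a \in g^G} \mathsf{dim}\bigl(\psi_g^{-1}(a)\bigr) \;=\; \mathsf{dim}(C_G(g)).
\]
Hence $C_G(g)$ is wide in $G$ for every $g \in G$.

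Finally, since $C_G(g_0, \ldots, g_n) = \bigcap_{i \leq n} C_G(g_i)$ is the intersection of finitely many definable wide subgroups, Lemma~\ref{claim 1} yields that this intersection is also wide, completing the proof. There is no real obstacle here: the only slightly delicate point is verifying that dimension is preserved under definable bijections (which follows directly from the fibration axiom with $r = 0$ applied in both directions), and the whole argument rests on the elementary but essential fact that finite-by-abelian groups have finite commutator subgroup, hence finite conjugacy classes.
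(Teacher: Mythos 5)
Your proof is correct and follows essentially the same route as the paper's: show $g^G \subseteq g\,[G,G]$ is finite because $[G,G]$ is finite, identify $G/C_G(g)$ with $g^G$ to conclude $\mathsf{dim}(C_G(g)) = \mathsf{dim}(G)$, and then close the argument via Lemma~\ref{claim 1} on the finite intersection. The only difference is cosmetic: you unpack the identity $\mathsf{dim}(C_G(g))=\mathsf{dim}(G)-\mathsf{dim}(g^G)$ via the fibration and union axioms, whereas the paper invokes it directly.
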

\begin{proof}
Since $G$ is finite-by-abelian, the derived subgroup $G'$ is finite. For any $g\in G$, the set $g^{-1}g^G=\{g^{-1}h^{-1}gh:h\in G\}$ is a subset of $G'$, hence is finite. Therefore, $g^G$ is finite and is of dimension 0. Note that there is a definable bijection between $g^G$ and $G/C_G(g)$. Thus, $\mathsf{dim}(C_G(g))=\mathsf{dim}(G)-\mathsf{dim}(g^G)=\mathsf{dim}(G)$.
As $C_G(g_i)$ is definable and wide in $G$ for each $i\leq n$, so is $C_G(g_0,\ldots,g_n)$ by Lemma \ref{claim 1}.
\end{proof}

\begin{lemma}\label{claim 5}
Let $B_1\trianglelefteq A_1$ and $B_2\trianglelefteq A_2$ be subgroups of $G$. If both $A_1/B_1$ and $A_2/B_2$ are finite-by-abelian, then so is $(A_1\cap A_2)/(B_1\cap B_2)$. 
\end{lemma}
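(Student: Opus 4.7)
The plan is to reduce the notion \emph{finite-by-abelian} to the equivalent internal condition \emph{the derived subgroup is finite}, and then to tie the two hypotheses together via a single diagonal map.

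First, I would invoke the (standard) characterization: a group $H$ is finite-by-abelian if and only if $H'$ is finite. (If $N \trianglelefteq H$ is finite with $H/N$ abelian then $H' \leq N$; conversely $H/H'$ is always abelian.) Applied to $A_i/B_i$, whose derived subgroup is $A_i'B_i/B_i$, the two hypotheses translate exactly to $[A_i' : A_i' \cap B_i] < \infty$ for $i=1,2$.

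Second, I would verify that $B_1 \cap B_2$ is normalized by $A_1 \cap A_2$: each $B_i \trianglelefteq A_i \supseteq A_1 \cap A_2$, so each is stabilized under conjugation by $A_1 \cap A_2$, hence so is their intersection. Thus the quotient $(A_1 \cap A_2)/(B_1 \cap B_2)$ makes sense as a group, and by the characterization above it suffices to show that its derived subgroup, namely $(A_1 \cap A_2)'(B_1 \cap B_2)/(B_1 \cap B_2)$, is finite.

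The heart of the argument is the diagonal homomorphism
$$\phi\colon A_1' \cap A_2' \longrightarrow (A_1'B_1/B_1) \times (A_2'B_2/B_2), \qquad x \mapsto (xB_1, xB_2),$$
whose codomain is finite by the first step and whose kernel equals $A_1' \cap A_2' \cap B_1 \cap B_2$. So $[A_1' \cap A_2' : (A_1' \cap A_2') \cap B_1 \cap B_2] < \infty$, and since $(A_1 \cap A_2)' \leq A_1' \cap A_2'$, we conclude that $(A_1 \cap A_2)'(B_1 \cap B_2)/(B_1 \cap B_2)$ is finite, as required. I do not anticipate any real obstacle here: the argument is a short, self-contained piece of group theory requiring no model-theoretic input, and the only small points to watch are the normality check and the equivalence \emph{finite-by-abelian $\Leftrightarrow$ finite commutator subgroup}.
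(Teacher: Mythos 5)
Your proof is correct and uses essentially the same key idea as the paper: the injective diagonal map into the finite product $(A_1'B_1/B_1)\times(A_2'B_2/B_2)$, via the inclusion $(A_1\cap A_2)'\leq A_1'\cap A_2'$. The only cosmetic difference is that you phrase the map at the level of $A_1'\cap A_2'$ and read off finiteness of the index, whereas the paper works directly with the quotient $((A_1'\cap A_2')(B_1\cap B_2))/(B_1\cap B_2)$.
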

\begin{proof}
For the derived subgroups, we have 
\begin{align*}
((A_1\cap A_2)/(B_1\cap B_2))'&=((A_1\cap A_2)'(B_1\cap B_2))/(B_1\cap B_2)\\
&\subseteq ((A_1'\cap  A_2')(B_1\cap B_2))/(B_1\cap B_2).
\end{align*} 
Since both $A_1'B_1/B_1=(A_1/B_1)'$ and $A_2'B_2/B_2=(A_2/B_2)'$ are finite, so is the product $(A_1'B_1/B_1)\times (A_2'B_2/B_2)$. Define a function $$f:((A_1'\cap  A_2')(B_1\cap B_2))/(B_1\cap B_2)\longrightarrow (A_1'B_1/B_1)\times (A_2'B_2/B_2)$$ by sending $a(B_1\cap B_2)$ to $(aB_1,aB_2)$. It is easy to check that $f$ is injective. Therefore, $((A_1'\cap  A_2')(B_1\cap B_2))/(B_1\cap B_2)$ is finite. We conclude that $((A_1\cap A_2)/(B_1\cap B_2))'$ is finite and $(A_1\cap A_2)/(B_1\cap B_2)$ is finite-by-abelian.
\end{proof}

From now on, we assume further that $G$ is $\widetilde{\mathfrak{M}_c}$.

\begin{definition}
Let $H_1$ and $H_2$ be two subgroups  of $G$. We say $H_1$ is \textsl{almost contained} in $H_2$, denoted as $H_1\lesssim H_2$, if $[H_1:H_2\cap H_1]<\infty$. If both $H_1\lesssim H_2$ and $H_2\lesssim H_1$ hold, then $H_1$ and $H_2$ are called \textsl{commensurable}. 

For two subgroups $H,K\leq G$, the \textsl{almost centralizer} of $K$ in $H$ is defined as $$\widetilde{C}_H(K):=\{h\in H:[K:C_K(h)]<\infty\}.$$ The \textsl{almost center} is defined as $\widetilde{Z}(H):=\widetilde{C}_H(H)$.

Let $\mathcal{D}$ be an infinite family of subgroups of $G$. We say $\mathcal{D}$ is \textsl{uniformly commensurable} if there is some $N\in\mathbb{N}$ such that $[D:D\cap D']\leq N$ for all $D,D'\in \mathcal{D}$.
\end{definition}

\begin{remark}\label{remark-Z(G)}
When $G$ is $\widetilde{\mathfrak{M}_c}$ and $H,K$ are definable subgroups of $G$, then $\widetilde{C}_H(K)$ is also definable. (\cite[Proposition 3.28 ]{Hempel16})
\end{remark}

We list a useful fact for almost centralizers here.

\begin{fact}\label{fact-centralizer}\cite[Theorem 3.13]{Hempel16}
Let $H$ and $K$ be two definable subgroups of $G$. Then $H\lesssim \widetilde{C}_G(K)$ if and only if $K\lesssim \widetilde{C}_G(H)$.
\end{fact}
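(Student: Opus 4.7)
The plan is to exploit the symmetry of the conclusion: it suffices to prove one direction, say that $H \lesssim \widetilde{C}_G(K)$ implies $K \lesssim \widetilde{C}_G(H)$. I would first replace $H$ by its finite-index subgroup $H_0 := H \cap \widetilde{C}_G(K)$, which is definable by Remark \ref{remark-Z(G)}. One checks directly that $\widetilde{C}_G(H_0) = \widetilde{C}_G(H)$: for any $g \in G$, the indices $[H : C_H(g)]$ and $[H_0 : C_{H_0}(g)]$ are finite or infinite together, since they differ by at most the factor $[H : H_0] < \infty$. Thus I may assume every element of $H$ lies in $\widetilde{C}_G(K)$, i.e.\ $[K : C_K(h)] < \infty$ for every $h \in H$.

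Next I would invoke the $\widetilde{\mathfrak{M}_c}$-condition on $G$ with its constant $d$, applied greedily to the family $\{C_G(h) : h \in H\}$. I choose $h_0, h_1, \ldots, h_k \in H$ extending a chain $C_G(h_0) \supsetneq C_G(h_0, h_1) \supsetneq \cdots$ as long as every consecutive index strictly exceeds $d$. By $\widetilde{\mathfrak{M}_c}$ the length satisfies $k \leq d$, and by maximality $[C_G(h_0, \ldots, h_k) : C_G(h_0, \ldots, h_k, h)] \leq d$ for every $h \in H$. Define $L := C_K(h_0, \ldots, h_k)$; it has finite index in $K$ because each $C_K(h_i)$ does, and intersecting the previous bound with $K$ gives $[L : C_L(h)] \leq d$ for all $h \in H$. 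In particular the family $\{C_L(h) : h \in H\}$ is uniformly commensurable inside $L$.

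The final step, and the main obstacle, is to upgrade this uniform commensurability to $[L : C_L(H)] < \infty$, where $C_L(H) := \bigcap_{h \in H} C_L(h)$; once this is established, the conclusion is immediate, since $C_L(H) \subseteq C_G(H) \subseteq \widetilde{C}_G(H)$ gives $[K : K \cap \widetilde{C}_G(H)] \leq [K : L] \cdot [L : C_L(H)] < \infty$. The difficulty is that the naive estimate $[L : \bigcap_{h \in H} C_L(h)] \leq d^{|H|}$ is useless when $H$ is infinite. The plan is to resolve this via a Schlichting-closure style argument for uniformly commensurable families available in the $\widetilde{\mathfrak{M}_c}$-setting: such a family admits a common subgroup $N \leq L$ commensurable with each $C_L(h)$, hence of finite index in $L$, and $N$ then almost centralizes every element of $H$. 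A further application of $\widetilde{\mathfrak{M}_c}$ to the chains of centralizers arising from $L$-conjugates of elements of $H$ should then extract the required uniform finite-index bound. This delicate combinatorial extraction is the technical heart of the proof in \cite{Hempel16}.
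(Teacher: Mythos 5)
The paper cites this as a Fact from Hempel's thesis without reproducing a proof, so there is no in-text argument to compare against; I evaluate your proposal on its own merits.

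Your reductions are sound up to the point where you obtain $L\leq K$ of finite index in $K$ with $[L:C_L(h)]\leq d$ for every $h\in H$. But the final step contains two genuine errors, and they are not cosmetic -- they sit precisely at the heart of why this theorem is non-trivial.

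First, the sub-goal $[L:C_L(H)]<\infty$, where $C_L(H)=\bigcap_{h\in H}C_L(h)$, is strictly stronger than the theorem's conclusion and is in general false, even inside $\widetilde{\mathfrak{M}_c}$-groups. The conclusion only asks for a finite-index subgroup of $K$ inside the \emph{almost} centralizer $\widetilde{C}_G(H)$; you are asking for one inside the genuine pointwise centralizer $C_G(H)$. Consider the infinite extraspecial $p$-group $G$ generated by $\{a_i,b_i:i<\omega\}$ and a central $z$ of order $p$, with $[a_i,b_j]=z^{\delta_{ij}}$ and all other pairs commuting. Take $L=\langle a_i:i<\omega\rangle$ and $H=\langle b_i:i<\omega\rangle$. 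Every centralizer $C_G(g)$ has index dividing $p$ in $G$, so $G$ is $\widetilde{\mathfrak{M}_c}$ with $d=p$; moreover $[L:C_L(h)]\leq p$ for every $h\in H$, so your reduction is already complete with $L=K$. Yet $\bigcap_{h\in H}C_L(h)=\{1\}$, so $[L:C_L(H)]$ is infinite. The Fact of course still holds here, since $[H:C_H(\ell)]\leq p$ for every $\ell\in L$, giving $L\subseteq\widetilde{C}_G(H)$ directly -- but not via your sub-goal.

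Second, and relatedly, the sentence ``such a family admits a common subgroup $N\leq L$ commensurable with each $C_L(h)$ \dots and $N$ then almost centralizes every element of $H$'' silently reverses the direction of almost-centralization. If $N$ is commensurable with each $C_L(h)$ and $N\leq L$, what you get is $[N:N\cap C_L(h)]=[N:C_N(h)]<\infty$ for every $h\in H$. That is the assertion $H\subseteq\widetilde{C}_G(N)$, which is nothing but a restatement of the hypothesis you started from. What is needed is $N\subseteq\widetilde{C}_G(H)$, i.e.\ $[H:C_H(n)]<\infty$ for every $n\in N$, and nothing in the Schlichting step gives any handle on indices inside $H$. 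The gap between ``every $h$ has small index centralizer in $L$'' and ``every $\ell$ has small index centralizer in $H$'' is exactly the content of the theorem; a proof must introduce some genuinely new idea (in Hempel's treatment, a counting or finite-approximation argument) to cross it, and your outline does not.
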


\begin{lemma}\label{claim 2}
Let $D:=C_G(\bar{g})$ be the centralizer of some finite tuple $\bar{g}
\in G^n$. Suppose $D$ is wide in $G$. Then there is a wide definable normal subgroup $N$ of $G$ such that $N$ is commensurable with $E:=\bigcap_{i\leq k}D^{t_i}$ for some $k\in\mathbb{N}$ and $t_0,\ldots,t_k\in G$.
\end{lemma}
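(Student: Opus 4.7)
The plan is to build $E$ by stabilizing a chain of conjugate-centralizers under the $\widetilde{\mathfrak{M}_c}$-condition, observe that the $G$-conjugates of $E$ form a uniformly commensurable family, and then extract a definable normal subgroup $N$ via Schlichting's theorem together with a short finiteness argument.

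First I would apply the $\widetilde{\mathfrak{M}_c}$-condition to the descending chain $D \supseteq D \cap D^{t_1} \supseteq D \cap D^{t_1} \cap D^{t_2} \supseteq \ldots$, whose successive terms are the centralizers $C_G(\bar g, \bar g^{t_1}, \ldots, \bar g^{t_j})$. Refining to a chain in which only a single element is adjoined at each step, the standard consequence of $\widetilde{\mathfrak{M}_c}$ that any chain of centralizers stabilizes up to finite index lets me choose $t_1, \ldots, t_k \in G$ so that $E := D \cap D^{t_1} \cap \cdots \cap D^{t_k}$ satisfies the uniform bound $[E : E \cap D^t] \leq d'$ for every $t \in G$ and some fixed constant $d'$. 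Were this impossible, one could keep prolonging the list with conjugates $D^t$ causing drops exceeding $d'$, producing a chain of centralizers that violates the condition. Each $D^{t_i}$ is wide by invariance of $\mathsf{dim}$ under conjugation, so Lemma \ref{claim 1} ensures $E$ is wide as well.

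Because $E^s = \bigcap_{i \leq k} D^{t_i s}$, intersecting $E$ with each factor $D^{t_i s}$ one at a time yields $[E : E \cap E^s] \leq (d')^{k+1} =: M$ for every $s \in G$, so the family $\{E^s : s \in G\}$ is uniformly commensurable with bound $M$. Schlichting's theorem applied to $E \leq G$ then gives that the normal core $N := \bigcap_{g \in G} E^g = \bigcap_{t \in G} D^t$ has $[E : N]$ finite; hence $N$ is $G$-normal, wide (being of finite index in the wide subgroup $E$), and commensurable with $E$.

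To obtain definability of $N$, I would use that $E/N$ is finite: for each non-trivial coset $aN$ of $N$ in $E$, pick $t_a \in G$ with $a \notin D^{t_a}$, which exists because $a \notin \bigcap_t D^t$; then $E \cap \bigcap_a D^{t_a}$ excludes every non-trivial coset and therefore equals $N$, so $N$ is visibly a finite intersection of conjugates of $D$, hence definable. The main obstacle I anticipate is the first step: extracting from the raw $\widetilde{\mathfrak{M}_c}$-condition, which is a combinatorial statement about chains of centralizers of single elements, the uniform bound $[E : E \cap D^t] \leq d'$ holding for all $t \in G$ simultaneously. This is where the Wagner-Supersimple / Hempel ``chain condition on centralizers up to finite index'' gets invoked, with some care needed to handle the fact that the chain intersects conjugates of a centralizer of a tuple rather than of a single element.
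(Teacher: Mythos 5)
Your construction of $E$ via the $\widetilde{\mathfrak{M}_c}$-condition, the observation that $E$ is wide by Lemma \ref{claim 1}, and the estimate $[E:E\cap E^s]\le (d')^{k+1}$ making $\{E^s: s\in G\}$ uniformly commensurable are all correct and match the paper. The gap is in how you invoke Schlichting's theorem. You assert that Schlichting gives that the normal core $N:=\bigcap_{g\in G}E^g=\bigcap_{t\in G}D^t$ has finite index in $E$. That is not what Schlichting's theorem says, and it is false in general: uniform commensurability of a family bounds pairwise indices $[E:E\cap E^s]$, but it does not bound the index of the intersection of the whole (typically infinite) family. Schlichting's theorem (\cite[Theorem 4.2.4]{Wagner-Supersimple}) instead produces \emph{some} subgroup $N$, built as a finite extension of a finite subintersection of members of the family, which is commensurable with every member of $\{E^s\}$ and invariant under all automorphisms of $G$ stabilizing the family setwise. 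That $N$ is generally strictly larger than the normal core, and it is a different object from $\bigcap_t D^t$.

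Consequently, your final paragraph on definability also does not work as written: since $[E:\bigcap_t D^t]$ need not be finite, you cannot enumerate the nontrivial cosets and pick a $t_a$ for each. The paper's route is simpler and avoids both issues: the Schlichting group $N$ is automatically definable because it is a finite extension of a finite intersection of the definable groups $E^s$, and it is normal in $G$ because conjugation by any element of $G$ permutes the family $\{E^s: s\in G\}$, hence fixes $N$. Commensurability with $E$ is part of Schlichting's conclusion, and wideness then follows since $E$ is wide and $N$ is commensurable with it. You should replace the normal-core argument with this direct reading of Schlichting's statement.
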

\begin{proof}
By the $\widetilde{\mathfrak{M}_c}$-condition, there are $t_0,\ldots, t_k\in G$ and $d\in \mathbb{N}$ such that for any $t\in G$ we have $[\bigcap_{i\leq k}D^{t_i}:\bigcap_{i\leq k}D^{t_i}\cap D^t]\leq d$. Let $E:=\bigcap_{i\leq k}D^{t_i}$. Since $E$ is a finite intersection of wide subgroups, $E$ is also wide by Lemma \ref{claim 1}. For any $h_1,h_2\in G$, $$[E^{h_1}:E^{h_1}\cap E^{h_2}]=[E:E\cap E^{h_2 h^{-1}_1}]\leq \prod_{i\leq k}[E:E\cap D^{t_ih_2h_1^{-1}}]\leq d^{k+1}.$$ Therefore $\mathcal{E}:=\{E^t:t\in G\}$ is a family of uniformly commensurable definable subgroups of $G$. By Schlichting's Theorem (\cite[Theorem 4.2.4]{Wagner-Supersimple}), there is a definable subgroup $N$ of $G$, which is invariant under all automorphisms of $G$ stabilizing $\mathcal{E}$ setwise, and is commensurable with all members of $\mathcal{E}$. In particular, $N$ is normal in $G$ and is commensurable with $E$, hence is also wide.
\end{proof}

\begin{lemma}\label{claim 4}
Let $M,N$ be subgroups of $G$. Then $$\widetilde{Z}(M)\cap\widetilde{Z}(N)\leq \widetilde{Z}(M)\cap N\leq \widetilde{Z}(M\cap N).$$
\end{lemma}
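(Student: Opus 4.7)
The plan is to unpack the definitions and observe that both inclusions are essentially elementary index calculations once the right maps are written down.

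For the first inclusion $\widetilde{Z}(M)\cap\widetilde{Z}(N)\leq \widetilde{Z}(M)\cap N$, I would just observe that by definition $\widetilde{Z}(N)=\widetilde{C}_N(N)=\{n\in N:[N:C_N(n)]<\infty\}$ is a subset of $N$. So any element of $\widetilde{Z}(M)\cap\widetilde{Z}(N)$ lies in $\widetilde{Z}(M)$ and in $N$, giving the inclusion immediately. No work is needed here beyond pointing out that $\widetilde{C}_H(K)\subseteq H$ by construction.

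For the second inclusion $\widetilde{Z}(M)\cap N\leq \widetilde{Z}(M\cap N)$, I would take $g\in\widetilde{Z}(M)\cap N$. Since $\widetilde{Z}(M)\subseteq M$, we have $g\in M\cap N$ and $[M:C_M(g)]<\infty$. What remains is to bound $[M\cap N:C_{M\cap N}(g)]$. Note that $C_{M\cap N}(g)=C_M(g)\cap N=C_M(g)\cap(M\cap N)$, so the coset map sending $x(C_M(g)\cap(M\cap N))$ to $xC_M(g)$ defines an injection
\[
(M\cap N)/(C_M(g)\cap(M\cap N))\hookrightarrow M/C_M(g),
\]
which is well-defined and injective because if $x,y\in M\cap N$ satisfy $xC_M(g)=yC_M(g)$ then $y^{-1}x\in C_M(g)\cap(M\cap N)$. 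Hence $[M\cap N:C_{M\cap N}(g)]\leq [M:C_M(g)]<\infty$, so $g\in\widetilde{Z}(M\cap N)$ as required.

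There is no real obstacle here; the lemma is a purely formal manipulation of the definitions of $\widetilde{C}$ and $\widetilde{Z}$, and does not use the $\widetilde{\mathfrak{M}_c}$-condition, dimension, or pseudofiniteness. The only subtlety worth flagging is making sure the injection in the second inclusion is correctly identified, i.e.\ that intersecting the centralizer with $N$ does not increase the index beyond $[M:C_M(g)]$.
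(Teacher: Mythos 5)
Your proof is correct and follows essentially the same route as the paper: the first inclusion is immediate since $\widetilde{Z}(N)\subseteq N$, and the second reduces to the index bound $[M\cap N:C_M(g)\cap N]\leq[M:C_M(g)]$. You merely spell out the injection of coset spaces that justifies that index inequality, which the paper states without comment.
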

\begin{proof}
Clearly, we have $\widetilde{Z}(M)\cap\widetilde{Z}(N)\leq \widetilde{Z}(M)\cap N$ for any $M,N\leq G$.

If $g\in \widetilde{Z}(M)\cap N$, then $g\in M\cap N$ and $[M:C_{M}(g)]<\infty$. Hence, $$[M\cap N:C_{M\cap N}(g)]=[M\cap N:C_M(g)\cap N]\leq [M:C_M(g)]<\infty,$$ and we get $g\in \widetilde{Z}(M\cap N)$. Therefore, $\widetilde{Z}(M)\cap N\leq \widetilde{Z}(M\cap N)$.
\end{proof}

\begin{lemma}\label{claim 3}
Let $M,N$ be subgroups of $G$. If $M$ is commensurable with $N$, then $\widetilde{Z}(M)$ is commensurable with $\widetilde{Z}(N)$. 
\end{lemma}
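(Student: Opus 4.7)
My plan is to reduce the general case of commensurable subgroups $M$ and $N$ to a chain through their intersection. Since $M \sim N$, the intersection $M \cap N$ has finite index in both. So it suffices to show: whenever $[K:H]<\infty$, the almost centers $\widetilde{Z}(K)$ and $\widetilde{Z}(H)$ are commensurable. Applying this twice (once to $(M, M\cap N)$ and once to $(N, M\cap N)$) and using that commensurability is an equivalence relation on subgroups yields the conclusion.

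For the finite-index case, I would establish the set equality $\widetilde{Z}(K)\cap H = \widetilde{Z}(H)$. The inclusion $\widetilde{Z}(K)\cap H \subseteq \widetilde{Z}(H)$ is an instance of Lemma \ref{claim 4} (with $M=K$ and $N=H$, so $M\cap N=H$). For the reverse inclusion, given $g\in\widetilde{Z}(H)$, I have $g\in H\subseteq K$ and $[H:C_H(g)]<\infty$; since $C_H(g)\subseteq C_K(g)$ and $[K:H]<\infty$, the index $[K:C_K(g)]$ is bounded by $[K:H]\cdot[H:C_H(g)]<\infty$, so $g\in\widetilde{Z}(K)\cap H$. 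Once this equality is in hand, $[\widetilde{Z}(K):\widetilde{Z}(H)] = [\widetilde{Z}(K):\widetilde{Z}(K)\cap H] \leq [K:H] < \infty$, giving $\widetilde{Z}(H)\lesssim\widetilde{Z}(K)$. The reverse almost-containment $\widetilde{Z}(K)\lesssim\widetilde{Z}(H)$ then follows trivially from $\widetilde{Z}(K)\cap H\subseteq\widetilde{Z}(H)$ and $[\widetilde{Z}(K):\widetilde{Z}(K)\cap H]\leq[K:H]<\infty$.

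I do not expect a serious obstacle here: no use of the $\widetilde{\mathfrak{M}_c}$-hypothesis, Schlichting's theorem, or the dimension is needed, since the argument is purely a combinatorial manipulation of finite indices. The one subtlety is remembering that when passing from $\widetilde{Z}(H)$ up to $\widetilde{Z}(K)$ the new element $g$ automatically sits in $K$ (because it sits in the smaller group $H$), so the ``base set'' in the definition of the almost center does not cause trouble. Everything else is bookkeeping with the basic inequality $[A:B\cap A]\leq[A:B]$ for subgroups of a common overgroup.
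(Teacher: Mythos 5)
Your proof is correct and takes essentially the same route as the paper's: both rest on the identity $\widetilde{Z}(K)\cap H=\widetilde{Z}(H)$ (the paper writes it as $\widetilde{Z}(M\cap N)=\widetilde{Z}(M)\cap N$), proved by the index inequality $[K:C_K(g)]\leq[K:H][H:C_H(g)]$ together with Lemma \ref{claim 4}, followed by the standard bound $[\widetilde{Z}(K):\widetilde{Z}(K)\cap H]\leq[K:H]$; your reduction to the chain $M\geq M\cap N\leq N$ plus transitivity of commensurability is merely a reorganization of the paper's direct treatment. One harmless slip at the end: since $\widetilde{Z}(H)\subseteq\widetilde{Z}(K)$, that inclusion is what gives $\widetilde{Z}(H)\lesssim\widetilde{Z}(K)$, while the finiteness of $[\widetilde{Z}(K):\widetilde{Z}(H)]$ is what gives $\widetilde{Z}(K)\lesssim\widetilde{Z}(H)$ --- you have the two labels swapped, but you prove both facts, so nothing is missing.
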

\begin{proof}
If $g\in \widetilde{Z}(M\cap N)$, then $$[M:C_M(g)]\leq [M:C_{M\cap N}(g)]\leq[M:M\cap N][M\cap N:C_{M\cap N}(g)]<\infty,$$ hence, $g\in\widetilde{Z}(M)$. Similarly, $\widetilde{Z}(M\cap N)\leq \widetilde{Z}(N)$. Therefore, $\widetilde{Z}(M\cap N)\leq\widetilde{Z}(M)\cap\widetilde{Z}(N)$. Together with Lemma \ref{claim 4}, we have $$\widetilde{Z}(M\cap N)=\widetilde{Z}(M)\cap\widetilde{Z}(N)=\widetilde{Z}(M)\cap N=\widetilde{Z}(N)\cap M.$$

Since $M,N$ are commensurable, $$[\widetilde{Z}(M):\widetilde{Z}(M)\cap \widetilde{Z}(N)]=[\widetilde{Z}(M):\widetilde{Z}(M)\cap N]\leq[M:M\cap N]<\infty.$$ Similarly, $\widetilde{Z}(N)$ and $\widetilde{Z}(M)\cap \widetilde{Z}(N)$ are commensurable. 
%  By claim \ref{claim 4}, $\widetilde{Z}(M)\cap N\leq\widetilde{Z}(M\cap N)$. Thus $[\widetilde{Z}(M):\widetilde{Z}(M\cap N)]\leq [\widetilde{Z}(M):\widetilde{Z}(M)\cap N]<\infty$. 
%Therefore, $[\widetilde{Z}(M):\widetilde{Z}(M)\cap\widetilde{Z}(N)]\leq[\widetilde{Z}(M):\widetilde{Z}(M\cap N)]<\infty$. By symmetry, we also have $[\widetilde{Z}(N):\widetilde{Z}(M)\cap\widetilde{Z}(N)]<\infty$.
\end{proof}

\begin{lemma}\label{claim-0dimension}
Let $H,D$ be definable subgroups of $G$. Define $$H^D_0:=\{h\in H,~\mathsf{dim}(h^D)=0\}.$$ Then there are $d\in\mathbb{N}$ and a definable group $T\leq D$ such that $$H^D_0=\{h\in H,~[T:C_T(h)]\leq d\}.$$ In particular, $H^D_0$ is a definable subgroup of $H$.
\end{lemma}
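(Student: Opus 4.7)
The plan is to recognise $h \in H_0^D$ as the condition ``$C_D(h)$ is wide in $D$'', and then build the desired $T$ by a greedy descent inside $D$ whose termination is forced by the $\widetilde{\mathfrak{M}_c}$-condition on $G$.

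First I would note that the orbit map $D \to h^D$, $d \mapsto d^{-1} h d$, is a definable surjection whose fibres are the right cosets of $C_D(h)$ in $D$. Additivity of $\mathsf{dim}$ then gives $\mathsf{dim}(h^D) = \mathsf{dim}(D) - \mathsf{dim}(C_D(h))$, so
\[
H_0^D = \{h \in H : C_D(h) \text{ is wide in } D\}.
\]
From this description and Lemma \ref{claim 1}, applied through $C_D(h_1 h_2) \supseteq C_D(h_1)\cap C_D(h_2)$ and $C_D(h^{-1}) = C_D(h)$, the set $H_0^D$ is already an abstract subgroup of $H$.

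Next, fix the constant $d$ supplied by the $\widetilde{\mathfrak{M}_c}$-condition on $G$, and attempt to construct $h_0, h_1, \ldots \in H_0^D$ greedily: at stage $i$, pick $h_i \in H_0^D$ such that $[C_D(h_0,\ldots,h_{i-1}) : C_D(h_0,\ldots,h_i)] > d$, and stop as soon as no such element exists. Because $C_D(\bar a) = D \cap C_G(\bar a)$, the natural injection $(C_G(\bar a)\cap D)/(C_G(\bar a,b)\cap D) \hookrightarrow C_G(\bar a)/C_G(\bar a,b)$ gives the comparison $[C_D(\bar a):C_D(\bar a,b)] \leq [C_G(\bar a):C_G(\bar a,b)]$. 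If the procedure ever reached length $d+1$, producing $h_0,\ldots,h_d$, then for each $j = 0,\ldots,d-1$ the transition $C_G(h_0,\ldots,h_j) \to C_G(h_0,\ldots,h_{j+1})$ would have index $> d$, contradicting $\widetilde{\mathfrak{M}_c}$ applied to the tuple $h_0,\ldots,h_d$. Thus the construction halts after some $n \leq d$ steps.

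Finally, set $T := C_D(h_0,\ldots,h_{n-1})$. Since each $C_D(h_i)$ is wide in $D$ by the first paragraph, Lemma \ref{claim 1} gives that $T$ is wide in $D$ too. The inclusion $H_0^D \subseteq \{h \in H : [T:C_T(h)]\leq d\}$ is exactly the termination condition at step $n$; conversely, $[T:C_T(h)] \leq d$ forces $\mathsf{dim}(C_T(h)) = \mathsf{dim}(T) = \mathsf{dim}(D)$, so $C_D(h) \supseteq C_T(h)$ is wide in $D$ and $h \in H_0^D$ by the first paragraph. Since the right-hand side is plainly first-order in $h$, $H_0^D$ is definable, completing the statement. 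The only delicate point is the index bookkeeping between $D$-centralizers and $G$-centralizers needed to trigger $\widetilde{\mathfrak{M}_c}$; everything else is mechanical.
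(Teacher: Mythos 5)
Your proof is correct and follows essentially the same route as the paper: you first check $H^D_0$ is a subgroup via the orbit/centralizer correspondence, then use the $\widetilde{\mathfrak{M}_c}$-condition to produce a finite tuple whose $D$-centralizer $T$ is wide and almost minimal, and conclude by the two inclusions. The only difference is that you unpack the step the paper states as ``by the $\widetilde{\mathfrak{M}_c}$-condition there are $h_0,\dots,h_n$ and $d$ such that\dots'' into an explicit greedy descent, including the (correct, and worth noting) index comparison $[C_D(\bar a):C_D(\bar a,b)]\leq[C_G(\bar a):C_G(\bar a,b)]$ needed to transfer the chain condition from $G$-centralizers to $D$-centralizers.
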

\begin{proof}
It is easy to see that $\mathsf{1}\in H^D_0$ and that it is closed under inverse. Note that $(h_1h_2)^D\subseteq h_1^Dh_2^D$. Therefore, if $h_1,h_2\in H^D_0$, then $$\mathsf{dim}((h_1h_2)^D)\leq \mathsf{dim}(h_1^D)+\mathsf{dim}(h_2^D)=0.$$ Hence, $h_1h_2\in H^D_0$.

By the $\widetilde{\mathfrak{M}_c}$-condition, there are $h_0,\cdots,h_n\in H^D_0$ and $d\in\mathbb{N}$ such that $[T:C_T(h)]\leq d$ for all $h\in H^D_0$, where $T:=C_D(h_0,\cdots,h_n)$. Since for each $h_i$, $\mathsf{dim}(C_D(h_i))=\mathsf{dim}(D)$, we have $\mathsf{dim}(T)=\mathsf{dim}(C_D(h_0,\cdots,h_n))=\mathsf{dim}(D)$. Let $$M:=\{h\in H, ~[T:C_T(h)]\leq d\}.$$ Then $M$ is definable. We claim that $M=H^D_0$. By definition, $H^D_0\subseteq M$. On the other hand, if $h\in M$, then $\mathsf{dim}(C_D(h))\geq \mathsf{dim}(C_T(h))=\mathsf{dim}(T)=\mathsf{dim}(D)$. Hence, $\mathsf{dim}(h^D)=0$ and $h\in H^D_0$. 
\end{proof}
\medskip

\section{Permutation Groups of Dimension 1 and 2}\label{sec4}
In this section, we analyse the permutation groups in $\mathcal{S}$ of dimension 1 or 2.

Here is a useful lemma for (definably) primitive permutation groups that we will use a lot without referring to it explicitly.

\begin{lemma}\label{lem-trivialtansitive} Let $(G,X)$ be a (definably) primitive permutation group and $A$ a (definable) normal subgroup of $G$. Then $A$ is either trivial or acts transitively on $X$.
\end{lemma}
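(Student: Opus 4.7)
The plan is to consider the partition of $X$ into $A$-orbits and invoke the (definable) primitivity hypothesis. First I would define the equivalence relation $\sim_A$ on $X$ by $x\sim_A y$ iff there exists $a\in A$ with $y=x^a$. Since $A$ is a group, $\sim_A$ is an equivalence relation whose classes are precisely the $A$-orbits on $X$, and when $A$ is definable so is $\sim_A$ (the quantifier ranges over the definable set $A$).

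Next I would verify that the partition into $A$-orbits is $G$-invariant, i.e.\ that $G$ permutes the $A$-orbits. Given $g\in G$ and $x,y\in X$ with $y=x^a$ for some $a\in A$, write $ag=g\cdot(g^{-1}ag)$; then $y^g=(x^g)^{g^{-1}ag}$. Normality of $A$ in $G$ gives $g^{-1}ag\in A$, so $x^g\sim_A y^g$. Thus every $g\in G$ maps $A$-orbits to $A$-orbits, and the resulting (definable, if $A$ is definable) partition of $X$ is $G$-invariant.

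Finally I would apply the (definable) primitivity of $(G,X)$: the only $G$-invariant (definable) partitions of $X$ are the discrete partition into singletons and the trivial partition $\{X\}$. In the first case, every $a\in A$ fixes every $x\in X$, so $A$ acts trivially on $X$; under the standard convention that a permutation group acts faithfully, this forces $A=\{1\}$. In the second case there is a single $A$-orbit, which is exactly transitivity of $A$ on $X$.

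This is a short classical argument and I do not anticipate any real obstacle. The only point requiring a little care is the normality step: without $A\trianglelefteq G$, the $A$-orbit partition need not be $G$-invariant, and the reduction to primitivity would fail.
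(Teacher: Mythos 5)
Your proof is correct and is precisely the standard block-system argument the paper alludes to when it writes ``the proof is standard, we leave it to the readers.'' Nothing to add: the $A$-orbit partition is a (definable) $G$-congruence by normality of $A$, and (definable) primitivity forces it to be either the singleton partition (whence $A$ is trivial, by faithfulness of the action) or the universal one (whence $A$ is transitive).
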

The proof is standard, we leave it to the readers.

\begin{lemma}\label{claim 5.1}
Let $(G,X)$ be a definably primitive permutation group. If $G$ has a definable non-trivial normal abelian subgroup $A$, then $A$ acts regularly on $X$ and $A$ is either divisible torsion free or elementary abelian. 

Moreover, $G=A\rtimes G_{x}$ where $G_{x}=\mathsf{Stab}_G(x)$ for some $x\in X$, and $G_{x}$ acts on $X=x^A\simeq A$ by conjugation.

In particular if $(G,X)\in\mathcal{S}$, then we have in addition $\mathsf{dim}(A)=1$.
\end{lemma}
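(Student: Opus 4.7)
The plan is to adapt the classical argument for primitive permutation groups with an abelian normal subgroup to the definable setting. First, Lemma \ref{lem-trivialtansitive} ensures that $A$ acts transitively on $X$. To upgrade this to regularity, fix $x\in X$ and set $A_x:=A\cap G_x$; since $A$ is abelian and $A_x\subseteq A$, conjugation by any $a\in A$ fixes $A_x$ pointwise, so $A_{x^a}=a^{-1}A_xa=A_x$. Transitivity of $A$ then forces $A_y=A_x$ for every $y\in X$, so $A_x$ lies in the kernel of the action $G\to\mathrm{Sym}(X)$; this kernel is a definable normal subgroup of $G$ acting trivially, hence trivial by Lemma \ref{lem-trivialtansitive}. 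Therefore $A_x=\{1\}$, the orbit map $a\mapsto x^a$ is a definable bijection $A\to X$, and in the $\mathcal{S}$ case we immediately conclude $\mathsf{dim}(A)=\mathsf{dim}(X)=1$.

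For the semidirect product decomposition, $A\cap G_x=\{1\}$ has just been established; and for any $g\in G$, transitivity of $A$ supplies $a\in A$ with $x^g=x^a$, so $ga^{-1}\in G_x$ and $g\in G_x\cdot A$. This gives $G=A\rtimes G_x$. Identifying $X=x^A$ with $A$ via $a\mapsto x^a$, the natural action of $g\in G_x$ on $X$ corresponds to conjugation on $A$, since $(x^a)^g=x^{ag}=x^{g\cdot a^g}=(x^g)^{a^g}=x^{a^g}$.

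The step I expect to need most care is the structural dichotomy on $A$, which is where definable primitivity enters decisively. The key observation is that any definable $G$-invariant subgroup $B\leq A$ (equivalently, a definable subgroup normal in $G$, using that $A$ is abelian) yields a definable intermediate subgroup $G_x\leq B\cdot G_x\leq G$; moreover $B\cdot G_x\lneq G$ whenever $B\lneq A$, since any equation $a=bh$ with $a\in A$, $b\in B$, $h\in G_x$ gives $h=b^{-1}a\in A\cap G_x=\{1\}$ and hence $a=b\in B$. Definable primitivity therefore forces $B=\{1\}$ or $B=A$. Applying this to the characteristic (hence $G$-invariant) definable subgroups $A[n]:=\{a\in A:a^n=1\}$ and $nA$ for each $n\in\mathbb{N}$ yields the dichotomy: if some $A[p]$ is non-trivial for a prime $p$, then $A=A[p]$ is elementary abelian of exponent $p$; otherwise $A$ is torsion-free, each $nA$ must equal $A$, and hence $A$ is divisible.
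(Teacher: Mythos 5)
Your proof is correct and covers all of the lemma's claims. The transitivity step, the decomposition $G = A \rtimes G_x$, and the conjugation formula match the paper exactly; your regularity argument (showing $A\cap G_x$ is $A$-conjugation invariant, hence lies in the kernel of the action) is a standard equivalent of the paper's, which instead verifies directly that any two $a,b\in A$ with $x^a=x^b$ act identically on every $y=x^c$ using commutativity of $A$.

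The one genuinely different step is the structural dichotomy on $A$. The paper works only with the subgroups $nA$ and splits on dimension: if $\mathsf{dim}(nA)=1$ then $nA$ acts regularly and hence $nA=A$, while if $\mathsf{dim}(nA)=0$ then $nA$ is trivial by definable primitivity. You instead invoke the classical correspondence for primitive groups with a regular abelian normal subgroup: any definable $G$-invariant $B\leq A$ yields a definable subgroup $B\cdot G_x$ with $G_x\leq B\cdot G_x\leq G$, and the identity $A\cap G_x=\{1\}$ shows that $B\cdot G_x=G$ precisely when $B=A$, so definable maximality of $G_x$ forces $B\in\{\{1\},A\}$. Your route buys two things: it avoids the dimension entirely (the paper's version of this step uses $\mathsf{dim}$ even though the lemma is stated for an arbitrary definably primitive permutation group, not just $(G,X)\in\mathcal{S}$), and by applying the correspondence to $A[p]$ as well as to $nA$ you make the torsion-free side of the dichotomy fully explicit, which the paper's brief sketch leaves implicit. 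Both arguments rest ultimately on definable primitivity, and both are valid.
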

\begin{proof}
As $G$ acts definably primitively on $X$ and $A\trianglelefteq G$ is non-trivial, $A$ acts transitively on $X$. If $x^a=x^b$ for some $x\in X$ and $a,b\in A$, then for any $y\in X$, by transitivity, $y=x^c$ for some $c\in A$. As $A$ is abelian, we get $$y^a=x^{ca}=x^{ac}=x^{bc}=x^{cb}=y^b.$$ Hence, $a=b$. Therefore, $A$ acts regularly on $X$. Fix some $x\in X$. Then $a\mapsto x^a$ is a definable bijection from $A$ to $X$. Thus, if $(G,X)\in\mathcal{S}$, then $\mathsf{dim}(A)=\mathsf{dim}(X)=1$.

For any $n\in\omega$ let $nA:=\{a^n:a\in A\}$. Then $nA$ is a definable characteristic subgroup of $A$, hence definable abelian normal in $G$. If $\mathsf{dim}(nA)=1$, then $nA$ also acts regularly on $X$, whence $nA=A$. Otherwise, $\mathsf{dim}(nA)=0$, and $nA$ is trivial by definable primitivity of $G$. Therefore, $A$ is either divisible torsion free or elementary abelian. 

Let $G_{x}:=\mathsf{Stab}_G(x)$. As $A$ acts regularly on $X$, we have $A\cap G_{x}=\{\mathsf{1}\}$. For any $g\in G$ there is a unique element $a\in A$ such that $x^a=x^g$. Hence, $x=x^{ga^{-1}}$, so $ga^{-1}\in G_{x}$ and $g\in AG_{x}$. As $A\cap G_{x}=\{\mathsf{1}\}$, we obtain $G=A\rtimes G_{x}$.
%Therefore, we can write $g$ as $(gag^{-1})(ga^{-1})$ which is inside $A\rtimes G_{x_0}$. If $g=a'b$ with $a'\in A,b\in G_{x_0}$, then $(x_0)^g=(x_0)^{a'b}=(x_0)^{b(b^{-1}a'b)}=(x_0)^{b^{-1}a'b}$. Hence $a=b^{-1}a'b$ and $a'=bab^{-1}$. Since $g=a'b=ba$, we get $b=ga^{-1}$. Consequently, $a'=gag^{-1}$. Therefore, $G$ is the semi-direct product of $A$ and $G_{x_0}$.

Note that for any $g\in G_{x}$ and any $a\in A$, we have $(x^a)^g=x^{g^{-1}ag}$. Therefore, if we identify $A$ with $X$ via $a\mapsto x^a$, then $G_{x}$ acts on $A$ by conjugation.
\end{proof}

Combining the two lemmas above, we get the first part of our main result.
\begin{theorem}\label{thm-dim1}
Let $(G,X)\in\mathcal{S}$. If $\mathsf{dim}(G)=1$, then $G$ has a definable wide abelian normal subgroup $A$ such that $A$ acts regularly on $X$. Moreover, $A$ is either divisible torsion-free or elementary abelian. 
\end{theorem}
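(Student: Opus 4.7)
The plan is to reduce the theorem to Lemma \ref{claim 5.1} by producing a definable non-trivial normal abelian subgroup of $G$, and the natural candidate is supplied by Fact \ref{fact-frank-finitebyabelian}(2). Since $(G,X)\in\mathcal{S}$ and $\mathsf{dim}(G)=1$, the group $G$ is pseudofinite, $\widetilde{\mathfrak{M}_c}$, and carries an additive integer-valued dimension with $\mathsf{dim}(G)=1$, so Fact \ref{fact-frank-finitebyabelian}(2) yields a definable characteristic finite-by-abelian subgroup $C\leq G$ with $\mathsf{dim}(C)=1$. It remains to upgrade ``finite-by-abelian'' to ``abelian''.

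This is where definable primitivity of $(G,X)$ enters. The derived subgroup $C':=[C,C]$ is finite by definition of finite-by-abelian, hence $\mathsf{dim}(C')=0$ by algebraicity. Moreover, $C'$ is characteristic in $C$ and $C$ is characteristic (in particular normal) in $G$, so $C'$ is a definable normal subgroup of $G$. By Lemma \ref{lem-trivialtansitive}, $C'$ is either trivial or acts transitively on $X$. But $\mathsf{dim}(X)=1$ together with algebraicity forces $X$ to be infinite, whereas $C'$ is finite; hence $C'$ cannot act transitively, and we conclude $C'=\{\mathsf{1}\}$, i.e.\ $C$ is abelian.

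Setting $A:=C$, we now have a definable wide abelian normal subgroup of $G$, and all remaining claims follow directly from Lemma \ref{claim 5.1} applied to $A$: regularity of the $A$-action on $X$, the structural statement that $A$ is divisible torsion-free or elementary abelian, and the equality $\mathsf{dim}(A)=1$ are already packaged there. I do not foresee any serious obstacle: the entire argument rests on the finite-by-abelian/structural input of Fact \ref{fact-frank-finitebyabelian}(2), and definable primitivity contributes only the small but decisive observation that a finite normal subgroup of $G$ must be trivial.
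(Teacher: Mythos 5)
Your proof is correct and follows essentially the same route as the paper: invoke Fact \ref{fact-frank-finitebyabelian}(2) to get a definable wide characteristic finite-by-abelian subgroup, use definable primitivity (via Lemma \ref{lem-trivialtansitive}) and the finiteness of the derived subgroup to conclude it is abelian, and then hand off to Lemma \ref{claim 5.1}. The only cosmetic difference is that you argue a finite group cannot act transitively on an infinite set, while the paper phrases the same contradiction as $\mathsf{dim}(A')\geq\mathsf{dim}(X)=1$ versus $A'$ finite.
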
 
\begin{proof}
By Fact \ref{fact-frank-finitebyabelian}(2), $G$ has a definable wide normal finite-by-abelian subgroup $A$. Consider the derived subgroup $A'$. It is finite and characteristic in $A$, hence is a definable normal subgroup of $G$. Since $G$ acts definably primitively on $X$, either $A'$ is trivial or $A'$ acts transitively on $X$. If $A'$ acts transitively on $X$, then $\mathsf{dim}(A')\geq\mathsf{dim}(X)=1$, contradicting that $A'$ is finite. Hence $A'$ is trivial and $A$ is a definable wide abelian normal subgroup of $G$. By Lemma \ref{claim 5.1}, $A$ acts regularly on $X$ and is either divisible torsion free or elementary abelian.
\end{proof}

We now proceed to analyse the groups in $\mathcal{S}$ of dimension greater than 1. The following lemma gives a key property of them.

\begin{lemma}\label{lem3.0}
Let $(G,X)\in\mathcal{S}$ with $\mathsf{dim}(G)\geq 2$. If $K\trianglelefteq G$ and $\mathsf{dim}(K)\geq 2$, then there is no element $a\in K\setminus\{\mathsf{1}\}$, such that $C_K(a)$ is wide in $K$.
\end{lemma}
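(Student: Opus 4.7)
The plan is to argue by contradiction: suppose some $a\in K\setminus\{\mathsf{1}\}$ has $C_K(a)$ wide in $K$, and extract from the data a definable abelian subgroup of $G$ whose dimension must simultaneously be $1$ (by Lemma~\ref{claim 5.1}) and at least $2$.

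First I would promote $C_K(a)$ to a $G$-invariant subgroup via Schlichting. Since $K\trianglelefteq G$, the conjugates $C_K(a)^g=C_K(a^g)$ are wide in $K$, and the $\widetilde{\mathfrak{M}_c}$-condition on $G$ (applied to conjugates of $C_G(a)$ and then intersected with $K$) makes them uniformly commensurable. Following the proof of Lemma~\ref{claim 2} but with $G$ acting on the family by conjugation, this yields a definable subgroup $N\leq K$ that is normal in $G$, wide in $K$, and commensurable with $C_K(a)$. In particular $a\in\widetilde{C}_G(N)$, so $\widetilde{C}_K(N)=K\cap\widetilde{C}_G(N)$ is a definable normal subgroup of $G$ (by Remark~\ref{remark-Z(G)}) containing $a\neq\mathsf{1}$; definable primitivity forces it to act transitively on $X$, so $\mathsf{dim}(\widetilde{C}_K(N))\geq 1$. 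A dimension count on the product $\widetilde{C}_K(N)\cdot N\leq K$, whose multiplication map has fibres of dimension $\mathsf{dim}(\widetilde{Z}(N))$ (with $\widetilde{Z}(N)=N\cap\widetilde{C}_K(N)$), then yields $\mathsf{dim}(\widetilde{Z}(N))\geq\mathsf{dim}(\widetilde{C}_K(N))\geq 1$.

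Next I would identify a regular abelian normal subgroup inside $\widetilde{Z}(N)$. The almost centre $A:=\widetilde{Z}(N)$ is characteristic in $N$, hence normal in $G$, and is an FC-group since each of its elements has a finite $N$-conjugacy class. A standard chain-stabilization argument for $\widetilde{\mathfrak{M}_c}$-groups shows that $Z(A)=\bigcap_{b\in A}C_A(b)$ is commensurable with some finite sub-intersection $C_A(b_0,\ldots,b_k)$, hence has finite index in $A$; Schur's theorem then implies $A'$ is finite. But $A'$ is characteristic in $A$, hence a finite normal subgroup of $G$, and definable primitivity together with $X$ being infinite forces $A'=\mathsf{1}$. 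Thus $A$ is abelian; Lemma~\ref{claim 5.1} then yields $\mathsf{dim}(A)=1$, $A$ acts regularly on $X$, and by faithfulness of the action one obtains $C_G(A)=A$, so in particular $C_K(A)=A$.

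Finally I would use that $A\leq\widetilde{Z}(N)$, so every $b\in A$ has $[N:C_N(b)]<\infty$. The same chain-stabilization argument, now inside $N$, produces a finite tuple $b_0,\ldots,b_k\in A$ with $C_N(A)$ commensurable to $C_N(b_0,\ldots,b_k)$, a finite intersection of finite-index subgroups of $N$; hence $\mathsf{dim}(C_N(A))=\mathsf{dim}(N)=\mathsf{dim}(K)\geq 2$. But $C_N(A)\leq C_K(A)=A$ has dimension $1$, a contradiction. The main obstacle is the chain-stabilization step that delivers the finite-by-abelian structure of $\widetilde{Z}(N)$; once that is in place, combining it with Schur's theorem and the rigidity $C_G(A)=A$ of a regular abelian action makes the final dimension inequality fall out immediately.
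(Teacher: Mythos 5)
The first half of your argument --- passing to a definable $G$-normal $N$ wide in $K$ via Schlichting, showing $\widetilde{C}_K(N)$ contains a nontrivial element and hence is broad by definable primitivity, and the product dimension count giving $\mathsf{dim}(\widetilde{Z}(N))\geq 1$ --- matches the paper's proof essentially step by step. The divergence, and the genuine gap, is in the twice-invoked ``standard chain-stabilization argument.'' The $\widetilde{\mathfrak{M}_c}$-condition only bounds the \emph{size of the drop} at each step of a centralizer chain; it does \emph{not} force $Z(A)=\bigcap_{b\in A}C_A(b)$ (nor $C_N(A)=\bigcap_{b\in A}C_N(b)$) to be commensurable with a finite sub-intersection $C_A(b_0,\ldots,b_k)$. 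Concretely, let $A$ be a nonprincipal ultraproduct of extraspecial $p$-groups $E_n$ of order $p^{2n+1}$. Then every centralizer has index $1$ or $p$, so $A$ is $\widetilde{\mathfrak{M}_c}$ with constant $d=p$ and is FC, and every finite intersection $C_A(b_0,\ldots,b_k)$ has finite index; but $Z(A)$ has infinite index in $A$. So neither the Schur step (you cannot conclude $[A:Z(A)]<\infty$) nor the final ``$C_N(A)$ is wide'' step is justified by what you wrote. The conclusions you want are in fact true, but they are a nontrivial theorem, not a chain argument: the paper invokes \cite[Proposition~4.23]{Hempel16} to get that $[\widetilde{Z}(N),\widetilde{C}_N(\widetilde{Z}(N))]$ is finite, hence (being a definable, $G$-normal, dimension-$0$ set) trivial, so $\widetilde{C}_N(\widetilde{Z}(N))\subseteq C_N(\widetilde{Z}(N))$, and then widens $\widetilde{C}_N(\widetilde{Z}(N))$ in $N$ via the symmetry of almost centralizers (Fact~\ref{fact-centralizer}).

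A secondary difference is the endgame. You prove $\widetilde{Z}(N)$ abelian, invoke Lemma~\ref{claim 5.1} to get a regular action of dimension $1$, use the rigidity $C_G(\widetilde{Z}(N))=\widetilde{Z}(N)$ of a regular abelian action, and derive the contradiction from $C_N(\widetilde{Z}(N))\leq \widetilde{Z}(N)$ being simultaneously wide and of dimension $1$. The paper does not need $\widetilde{Z}(N)$ to be abelian: having $H:=C_N(\widetilde{Z}(N))$ wide in $K$, it observes that $\mathsf{Stab}_H(x)$ is $\widetilde{Z}(N)$-invariant (since $H$ and $\widetilde{Z}(N)$ commute) and $\widetilde{Z}(N)$ is transitive, so $\mathsf{Stab}_H(x)$ is trivial, contradicting the Orbit--Stabilizer count $\mathsf{dim}(\mathsf{Stab}_H(x))\geq\mathsf{dim}(K)-1\geq 1$. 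Your alternative endgame is fine once the Hempel input is in place, but it takes a longer detour through Lemma~\ref{claim 5.1}; you should in any case replace the ``chain-stabilization'' steps by an appeal to Hempel's result (or an actual Neumann-type counting argument) rather than to $\widetilde{\mathfrak{M}_c}$ alone.
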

\begin{proof}
Suppose, towards a contradiction, that there is $a\in K\setminus\{\mathsf{1}\}$ and $\mathsf{dim}(C_K(a))=\mathsf{dim}(K)\geq 2$. By the $\widetilde{\mathfrak{M}_c}$-condition, there are $g_0,\cdots,g_n\in G$ such that $$\lbrace(\bigcap_{i\leq n}C_K(a^{g_i}))^g:g\in G\rbrace$$ is a uniformly commensurable family. Since $K\trianglelefteq G$, we have $a^{g_i}\in K$ and $(\bigcap_{i\leq n}C_K(a^{g_i}))^g$ is a subgroup of $K$ for any $g\in G$. Note that $C_{K}(a^{g_i})=(C_{K}(a))^{g_i}$ is wide in $K$ for each $g_i$. Thus, $\mathsf{dim}(\bigcap_{i\leq n}C_K(a^{g_i}))=\mathsf{dim}(K)\geq 2$.

By Schlichting's Theorem there is a definable subgroup $N$ of $K$ such that $N\trianglelefteq G$ and is commensurable with $\bigcap_{i\leq n}C_K(a^{g_i})$, whence wide in $K$. Consider the group $\widetilde{Z}(N)$. We claim that $\mathsf{dim}(\widetilde{Z}(N))\geq 1$. Since $N$ is commensurable with $\bigcap_{i\leq n}C_K(a^{g_i})$, we have $a^{g_i}\in \widetilde{C}_K(N)$ and $a^{g_i}\neq\mathsf{1}$. As $\widetilde{C}_K(N)$ is definable normal in $G$, by definable primitivity of $G$, it is of dimension at least 1 (otherwise, it would be trivial). Note that $\widetilde{Z}(N)=N\cap \widetilde{C}_K(N)$. Then 
\begin{align*}
\mathsf{dim}(\widetilde{Z}(N))&=\mathsf{dim}(K)-\mathsf{dim}(K/\widetilde{Z}(N))\\
&\geq \mathsf{dim}(K)-(\mathsf{dim}(K/N)+\mathsf{dim}(K/\widetilde{C}_K(N)))
\\
&\geq \mathsf{dim}(K)-0-\mathsf{dim}(K)+\mathsf{dim}(\widetilde{C}_K(N))=\mathsf{dim}(\widetilde{C}_K(N))\geq 1.
\end{align*}
Therefore $\widetilde{Z}(N)$ acts transitively on $X$.

By \cite[Proposition 4.23]{Hempel16}, the commutator group $E:=[\widetilde{Z}(N),\widetilde{C}_N(\widetilde{Z}(N))]$ is finite. Since $N$ is normal in $G$ and $E$ is characteristic in $N$ and definable of dimension zero, $E$ is trivial. Therefore, $\widetilde{C}_N(\widetilde{Z}(N))\subseteq C_N(\widetilde{Z}(N))$.
 
We claim that $\widetilde{C}_N(\widetilde{Z}(N))$ is wide in $K$. Indeed, by Fact \ref{fact-centralizer}, we have $N\lesssim \widetilde{C}_N(\widetilde{Z}(N))$ if and only if $\widetilde{Z}(N)\lesssim \widetilde{C}_N(N)=\widetilde{Z}(N)$. Thus, $N$ is commensurable with $\widetilde{C}_N(\widetilde{Z}(N))$.
 
Let $H:=C_N(\widetilde{Z}(N))$. Then $H$ is a definable wide subgroup of $K$ and is normal in $G$. Fix $x\in X$. For all $h\in \widetilde{Z}(N)$, $$\ \mathsf{Stab}_H(x^h)=(\mathsf{Stab}_H(x))^h=\mathsf{Stab}_H(x).$$  Since $\widetilde{Z}(N)$ acts transitively on $X$, we get $\mathsf{Stab}_H(x)=\{\mathsf{1}\}$. However, by the Orbit-Stabilizer Theorem, $$\mathsf{dim}(\mathsf{Stab}_{H}(x))=\mathsf{dim}(H)-\mathsf{dim}(\mathsf{Orb_H(x)})=\mathsf{dim}(K)-\mathsf{dim}(X)\geq 2- 1=1,$$ contradicting that $\mathsf{Stab}_H(x)=\{\mathsf{1}\}$.
\end{proof}

In the following, we will show that if we have a finite-by-abelian group acting on a one-dimensional abelian group, then under certain conditions, we can define a pseudofinite field.

\begin{theorem}\label{thm2D}
Let $A$ be an abelian group of dimension 1 and $D$ a broad %finite-by-ablian 
definable group of automorphisms of $A$. Suppose that $A_0\leq A$ is definable of dimension 0 and $D$ acts on $A/A_0$. Let $D_0:=\{d\in D:\forall a\in A,~a^d\in a+A_0 \}$, a definable normal subgroup of $D$. Write $a+A_0\in A/A_0$ as $[a]$ and $dD_0\in D/D_0$ as $[d]$. Suppose $D$ satisfies the following condition:
\centerline{
($\clubsuit$) $\mathsf{dim}([a]^{C_{D/D_0}([d_1],\ldots,[d_n])})=1$ for any $[a]\neq [\mathsf{0}]$ and $n\in\mathbb{N}, d_1,\ldots,d_n\in D$.}

Then there is an interpretable pseudofinite field $F$ such that $F^+$ is isomorphic to $A/A_0$ and $D/D_0$ embeds into $F^\times$ with $\mathsf{dim}(D/D_0)=1$.
\end{theorem}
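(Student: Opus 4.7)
The plan is to realize $F$ as the subring of $\mathrm{End}(\bar A)$ generated by the induced action of $\bar D := D/D_0$ on $\bar A := A/A_0$, and to use condition $(\clubsuit)$ throughout to force this ring to be a pseudofinite field.

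First, I would pass to the quotients. By the very definition of $D_0$, the induced action of $\bar D$ on $\bar A$ is faithful, and $\mathsf{dim}(\bar A) = \mathsf{dim}(A) - \mathsf{dim}(A_0) = 1$. Condition $(\clubsuit)$ now reads: for every $\bar a \neq \bar 0$ and every $\bar d_1, \ldots, \bar d_n \in \bar D$, $\mathsf{dim}(\bar a^{C_{\bar D}(\bar d_1, \ldots, \bar d_n)}) = 1$. Taking $n=0$ and applying the Orbit--Stabilizer formula already gives $\mathsf{dim}(\bar D) \geq 1$.

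The decisive step is then to show that $\bar D$ acts transitively on $\bar A \setminus \{\bar 0\}$ with $0$-dimensional point stabilizers, and hence that $\mathsf{dim}(\bar D) = 1$. Equivalently, I need to show that no nontrivial element of $\bar D$ fixes a broad subgroup of $\bar A$. Suppose instead that some $\mathsf{Stab}_{\bar D}(\bar a)$ were broad; then one would apply $(\clubsuit)$ inside this stabilizer and run Schlichting's theorem on the uniformly commensurable family of conjugates of suitable centralizers appearing in $(\clubsuit)$. As in the proof of Lemma~\ref{lem3.0}, this would produce a broad definable normal subgroup of $\bar D$ acting trivially on a wide $\bar D$-orbit, contradicting faithfulness. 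The wide orbit condition in $(\clubsuit)$, applied across distinct orbits, similarly rules out multiple $1$-dimensional orbits, giving transitivity.

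With regular transitivity established, I would construct the field by taking the subring $R \subseteq \mathrm{End}(\bar A)$ generated by $\bar D$. For any fixed $\bar a \neq \bar 0$, the evaluation map $R \to \bar A$, $r \mapsto r(\bar a)$, is a definable additive homomorphism whose image contains the (wide) orbit $\bar a^{\bar D}$; regularity of the action together with $(\clubsuit)$ forces the kernel of every nonzero $r \in R$ acting on $\bar A$ to be $0$-dimensional, so $R$ has no zero divisors. By pseudofiniteness, each finite component of the ambient ultraproduct realizes $R$ as a finite integral domain, hence a finite field by Wedderburn's little theorem, and their ultraproduct is an interpretable pseudofinite field $F$. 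Since $F$ acts faithfully and transitively on $\bar A \setminus \{\bar 0\}$, the group $\bar A$ is a $1$-dimensional $F$-vector space, yielding $F^+ \cong \bar A$; the embedding $\bar D \hookrightarrow F^\times$ is built into the construction, and counting gives $\mathsf{dim}(\bar D) = \mathsf{dim}(F^\times) = 1$.

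The main obstacle I anticipate is the regularity argument in the second step: ruling out broad point-stabilizers via a careful combination of $(\clubsuit)$, Schlichting's theorem, and the faithfulness of the $\bar D$-action on $\bar A$, in close analogy with Lemma~\ref{lem3.0}. Once that is in place, the passage to the field structure is essentially formal, modulo the pseudofinite incarnation of Wedderburn's theorem (or equivalently, Zassenhaus's classification of finite near-fields, noting that the sporadic exceptions have bounded order and hence disappear in the ultraproduct).
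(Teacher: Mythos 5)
Your broad plan — quotient, build a ring of endomorphisms, show it is a domain, invoke pseudofinite Wedderburn — tracks the paper's approach, but there are two genuine gaps.

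The ``decisive step'' you propose, proving that $\bar D := D/D_0$ acts transitively (even regularly) on $\bar A \setminus \{\bar{\mathsf{0}}\}$ (where $\bar A := A/A_0$), is false in general and is not needed. In the finite approximants $\bar D$ is typically a proper subgroup of $F^\times$ (the paper's Theorem~\ref{thm-dim2} explicitly allows $D$ to have finite index in $F^\times$), and then $\bar D$ has several one-dimensional orbits on $\bar A \setminus \{\bar{\mathsf{0}}\}$; condition $(\clubsuit)$ neither can nor should rule this out. All that follows from $(\clubsuit)$ with $n=0$ and Orbit--Stabilizer is that every nonzero orbit is wide, i.e.\ of dimension $1$. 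The equality $\mathsf{dim}(\bar D) = 1$ is not an input of the construction but its final consequence, once one has the embedding $\bar D \hookrightarrow F^\times$ with $\mathsf{dim}(F) = 1$. Your Schlichting-style argument by analogy with Lemma~\ref{lem3.0} is aimed at proving something false. Similarly, the late assertion that ``$F$ acts transitively on $\bar A \setminus \{\bar{\mathsf{0}}\}$'' is not automatic from $F$ being a field; the paper argues instead that each $[a]^F$ is a wide definable subgroup of $\bar A$, and then any two such wide subgroups intersect widely by Lemma~\ref{claim 1}, forcing them to coincide.

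The more serious gap is interpretability of the ring. You take $R$ to be ``the subring of $\mathrm{End}(\bar A)$ generated by $\bar D$,'' but this is not a priori a definable object: its elements are arbitrary finite signed sums of words in $D$, with no bounded-complexity presentation, so the ``definable additive homomorphism'' $R \to \bar A$ and the subsequent ultraproduct analysis do not make sense as stated. This is precisely what Lemma~\ref{lem5.0} is for, and why the paper replaces $\mathcal{R}_D(\bar A)$ by the enlarged ring $\widetilde{\mathcal{R}}_D(\bar A)$ generated by $D$ together with the definable set of inverses $(d-d')^{-1}$ for $d-d'\neq\mathbf{0}$: a dimension count (the wide sets $[a]^D$ and $[a]^{Dr}$ must overlap nontrivially inside the one-dimensional $\bar A$) then shows that every element of $\widetilde{\mathcal{R}}_D(\bar A)$ equals $(d_3-d_4)^{-1}(d_1-d_2)$ for suitable $d_1,\ldots,d_4\in D$, which gives a uniform definable parametrization. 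Without such a bounded-form representation you cannot speak of a definable domain, cannot pass to finite components to apply Wedderburn, and the remainder of the argument does not get off the ground.
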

Remark: If $D$ is finite-by-abelian and $A_0:=\{a\in A: \mathsf{dim}(a^D)=0\}$ is of dimension 0, then condition $(\clubsuit)$ is satisfied. Indeed, $C_D(d_1,\ldots,d_n)$ has finite index in $D$ when $D$ is finite-by-abelian. As $a\not\in A_0$ by assumption, $\mathsf{dim}(a^D)=1$. Hence, $\mathsf{dim}(a^{C_D(d_1,\ldots,d_n)})=\mathsf{dim}(a^D)=1$ and $$\mathsf{dim}([a]^{[C_D(d_1,\ldots,d_n)]})=\mathsf{dim}([a]^{C_{D/D_0}([d_1],\ldots,[d_n])})=1.$$ Also note that condition $(\clubsuit)$ implies that $\mathsf{dim}(a^D)=1$ for $a\not\in A_0$.

Let $\mathcal{R}_D(A/A_0)$ be the ring of endomorphisms of $A/A_0$ generated by $D$, with addition being the component-wise addition on $A$ and multiplication being composition. Then any $r\in \mathcal{R}_D(A/A_0)$ is equal to some $\sum_{i\leq n}(-1)^{\epsilon_i} d_i$, but this representation need not be unique.

%We write the element $a+A_0$ of $A/A_0$ as $[a]$. Note that for any $d\in D$ and $a\in A$ we have $[a]^d=[a^d]$. For a subset $B\subseteq A$ we also write $(B+A_0)/A_0\subseteq A/A_0$ as $[B]$.

\begin{lemma}\label{lem4.0}
For all $r\in\mathcal{R}_D(A/A_0)$, either $r$ is the constant $[\mathsf{0}]$ function $\mathbf{0}$, or $r$ is an automorphism of $A/A_0$.
\end{lemma}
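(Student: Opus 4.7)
The plan is to analyse $r\in\mathcal{R}_D(A/A_0)$ by studying its kernel and its image as definable subgroups of $A/A_0$, and to exploit the fact that these are stabilised by a large centralizer in $D/D_0$ against which the hypothesis $(\clubsuit)$ can be deployed. Note that $\mathsf{dim}(A/A_0)=1$ by additivity applied to the projection $A\to A/A_0$, and that $r$ is a group endomorphism of $A/A_0$, so $\ker r$ and $\mathrm{im}\, r$ are definable subgroups of $A/A_0$ with $\mathsf{dim}(\ker r)+\mathsf{dim}(\mathrm{im}\, r)=1$.

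Fix a representation $r=\sum_{i=1}^{n}\epsilon_i d_i$ with $\epsilon_i\in\{+1,-1\}$ and $d_i\in D$, and set $C:=C_{D/D_0}([d_1],\ldots,[d_n])$. The first key step is to show that $\ker r$ and $\mathrm{im}\, r$ are $C$-invariant subgroups of $A/A_0$. Indeed, for $[c]\in C$ the relation $[c][d_i]=[d_i][c]$ in $D/D_0$ says that $c d_i c^{-1}d_i^{-1}\in D_0$, and elements of $D_0$ act trivially on $A/A_0$, so $c$ and $d_i$ commute as automorphisms of $A/A_0$. Consequently $r\circ c=c\circ r$, from which $c(\ker r)=\ker r$ and $c(\mathrm{im}\, r)=\mathrm{im}\, r$ follow immediately (using that each $c$ is an automorphism).

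Now assume $r\neq\mathbf{0}$, so we may pick $[b]\in\mathrm{im}\, r\setminus\{[\mathsf{0}]\}$. By $C$-invariance $[b]^C\subseteq\mathrm{im}\, r$, and $(\clubsuit)$ gives $\mathsf{dim}([b]^C)=1$, hence $\mathsf{dim}(\mathrm{im}\, r)=1$ and $\mathsf{dim}(\ker r)=0$. If $\ker r$ were nontrivial, picking $[a]\in\ker r\setminus\{[\mathsf{0}]\}$ would yield $[a]^C\subseteq\ker r$ of dimension $1$ by the same two ingredients, contradicting $\mathsf{dim}(\ker r)=0$. Therefore $\ker r=\{[\mathsf{0}]\}$, and $r$ is a definable injective endomorphism of $A/A_0$.

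To conclude, I upgrade injectivity to surjectivity using the pseudofinite setting: $A/A_0$ is a pseudofinite set, and a definable injection from a pseudofinite set into itself is automatically a bijection, since in each finite factor of the ultraproduct an injective self-map of a finite set is bijective. Hence $r$ is an automorphism of $A/A_0$, as required. The main obstacle I anticipate is the clean identification of the correct centralizer $C$ and the verification that both $\ker r$ and $\mathrm{im}\, r$ are $C$-invariant; once that is in place, $(\clubsuit)$ immediately forces any nontrivial proper subgroup appearing as kernel or cokernel into a dimension contradiction, and the passage from injective to bijective is handed to us by pseudofiniteness.
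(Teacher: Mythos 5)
Your proposal is correct and takes essentially the same approach as the paper: both identify the centralizer $C=C_{D/D_0}([d_1],\ldots,[d_n])$, observe that $[a]^C\subseteq\ker r$ and $([a]^r)^C\subseteq\mathrm{im}\,r$ so that $(\clubsuit)$ forces any nontrivial kernel or image to have dimension $1$, invoke $\mathsf{dim}(\ker r)+\mathsf{dim}(\mathrm{im}\,r)=1$ to rule out both being nontrivial, and finish by pseudofiniteness to upgrade injectivity to bijectivity. The only (minor, welcome) difference is that you spell out explicitly why commutativity in $D/D_0$ gives commutativity as automorphisms of $A/A_0$, a point the paper leaves implicit.
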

\begin{proof}
%Let $D_0:=\{d\in D:\forall a\in A,~a^d\in a+A_0\}$, a definable normal subgroup of $D$ since $A_0$ is $D$-invariant. We write $hD_0\in D/D_0$ as $[h]$.

We first prove the following claim: if there is some $[a]\in A/A_0$ such that $[a]\neq[\mathsf{0}]$ and $[a]^r=[\mathsf{0}]$, then $\mathsf{dim}(\mbox{ker}(r))=1$. Indeed, let $d_1,\ldots,d_n$ be the elements of $D$ which appear in a representation of $r$. Then $([a]^{[h]})^{r}=([a]^{r})^{[h]}=[\mathsf{0}]$ for any $[h]\in C_{D/D_0}([d_1],\ldots,[d_n])$. As a consequence, $[a]^{C_{D/D_0}([d_1],\ldots,[d_n])}\subseteq \mbox{ker}(r)$. We have  $\mathsf{dim}([a]^{C_{D/D_0}([d_1],\ldots,[d_n])})=1$ by condition $(\clubsuit)$. Therefore, $\mbox{ker}(r)$ has dimension 1.

Now we prove a similar assertion for the dimension of the image: if there is some $[a]\neq[\mathsf{0}]$ such that $[a]^r\neq[\mathsf{0}]$, then $\mathsf{dim}(\mbox{im}(r))=1$. Let $d_1,\ldots,d_n$ be all the elements in $D$ which appear in a representation of $r$. For any $[d]\in C_{D/D_0}([d_1],\ldots,[d_n])$, we have $([a]^{[d]})^{r}=([a]^{r})^{[d]}$, i.e., $([a]^r)^{[d]}\in \mbox{im}(r)$. Hence, $([a]^r)^{C_{D/D_0}([d_1],\ldots,[d_n])}\subseteq \mbox{im}(r)$. Then $$1\geq \mathsf{dim}(\mbox{im}(r))\geq \mathsf{dim}(([a]^r)^{C_{D/D_0}([d_1],\ldots,[d_n])})=1.$$ 

Since $\mathsf{dim}(\mbox{ker}(r))+\mathsf{dim}(\mbox{im}(r))=\mathsf{dim}(A/A_0)=1$, we can conclude that either $\mbox{ker}(r)=\{[\mathsf{0}]\}$ or $\mbox{im}(r)=\{[\mathsf{0}]\}$. If $\mbox{im}(r)=\{[\mathsf{0}]\}$, then $r=\mathbf{0}$. Otherwise $r$ is injective. As $(G,X)$ is a pseudofinite structure, $r$ must also be surjective, hence an automorphism. 
\end{proof}
We can now see that $\mathcal{R}_D(A/A_0)$ is a division ring. To get an interpretable pseudofinite field, we need to define another ring. Let $\widetilde{\mathcal{R}}_{D}(A/A_0)$ be the ring of endomorphisms of $A/A_0$ generated by $D$ and the definable set $$\{(d-d')^{-1}:d, d'\in D, d-d'\neq\mathbf{0}\}$$ (the existence of $(d-d')^{-1}$ as automorphisms of $A/A_0$ is guaranteed by Lemma \ref{lem4.0}). 

By exactly the same proof, we can show that every non-zero element of $\widetilde{\mathcal{R}}_{D}(A/A_0)$  is an automorphism of $A/A_0$.

\begin{lemma}\label{lem5.0}
The division ring $\widetilde{\mathcal{R}}_D(A/A_0)$ is interpretable.
\end{lemma}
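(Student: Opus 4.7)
The plan is to interpret $R:=\widetilde{\mathcal{R}}_D(A/A_0)$ by parametrising its elements via their action on a single fixed nonzero element of $A/A_0$, and then transferring the ring structure onto a definable subset of $A/A_0$. Fix $[a_0]\in A/A_0\setminus\{[\mathbf{0}]\}$ and consider the evaluation map $\alpha\colon R\to A/A_0$, $r\mapsto[a_0]^r$. Additivity of $\alpha$ is immediate. Injectivity follows from the observation in the paragraph after Lemma~\ref{lem4.0}: every nonzero element of $R$ is an automorphism of $A/A_0$, and so has trivial kernel. The image $B:=\alpha(R)$ is an additive subgroup of $A/A_0$ which is stable under the right $R$-action, and in particular under $D$.

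For each $n\in\mathbb{N}$, let $\widetilde{R}_n$ denote the set of elements of $R$ expressible as a sum of at most $n$ signed products of at most $n$ factors from $D\cup\{(d-d')^{-1}:d,d'\in D,\ d-d'\neq\mathbf{0}\}$. Since $(d-d')^{-1}$ is a definable function $A/A_0\to A/A_0$ by Lemma~\ref{lem4.0}, each $\widetilde{R}_n$ is the image of a definable subset of a suitable power of $D$ under a definable evaluation map; consequently $B_n:=\alpha(\widetilde{R}_n)\subseteq A/A_0$ is definable, and the $B_n$ form an increasing chain whose union is $B$.

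The main step, and the step I expect to be hardest, is to show that this chain stabilises at some finite $N$ with $B_N=B$. In each finite component of the ultraproduct, $R$ is a finite division ring and hence a finite field by Wedderburn, so the chain stabilises at some finite stage. To make the stabilisation uniform across components I would argue as follows: the multiplicative group $R^\times$ acts freely on $B\setminus\{[\mathbf{0}]\}$ (any $r\in R^\times$ fixing $[a_0]$ would yield $r-\mathrm{id}\in R$ with nontrivial kernel, forcing $r=\mathrm{id}$), so $B$ is cyclic as an $R$-module on the generator $[a_0]$; a dimension argument using $\mathsf{dim}(A/A_0)=1$ and condition~($\clubsuit$) --- together with the observation that any additional free $R$-summand of $A/A_0$ beyond $B$ would contribute an extra $\mathsf{dim}$-$1$ slice --- forces $B=A/A_0$, after which \L{}o\'s's theorem converts the component-wise stabilisation into a uniform bound $N$.

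Once such an $N$ is obtained, I interpret $R$ as the set $A/A_0$ equipped with the inherited addition (which, via $\alpha$, matches the additive structure of $R$) and with multiplication
\[
[b]\star[c]=[d]\ \Longleftrightarrow\ \exists\,w\in\widetilde{R}_N\bigl(\operatorname{eval}(w)([a_0])=[c]\ \wedge\ \operatorname{eval}(w)([b])=[d]\bigr).
\]
Any two witnesses $w,w'$ represent the same element of $R$ because $\alpha$ is injective, so $\star$ is well-defined; the definition is manifestly first-order in $[b],[c],[d]$; and the ring axioms transfer from $R$ to $(A/A_0,+,\star)$ via $\alpha$. This exhibits $R$ as an interpretable division ring.
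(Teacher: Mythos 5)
Your injectivity of $\alpha$ is fine, and once a uniform bound $N$ is in hand your transport of the ring structure onto a definable quotient is essentially what the paper does (the paper realises the ring as $E/{\sim}$ with $E$ a definable set of formal expressions, you realise it as $(A/A_0,+,\star)$ via $\alpha$; these are equivalent). The problem is the step you yourself flag as hardest: getting the uniform $N$.

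There is a genuine gap there. \L{}o\'s's theorem does \emph{not} convert ``in each finite component the chain $B_n^i$ stabilises at some $N_i$'' into ``there is a single $N$ that works $\mathcal{U}$-almost everywhere.'' The $N_i$ may grow without bound, and then in the ultraproduct the chain $B_1\subsetneq B_2\subsetneq\cdots$ is strictly increasing forever with union $B$. Nothing in your argument rules this out. You try to route around this by first proving $B=A/A_0$ and then appealing to compactness/\L{}o\'s, but that is circular: the dimension argument you sketch (``any additional free $R$-summand of $A/A_0$ beyond $B$ contributes an extra $\mathsf{dim}$-$1$ slice'') needs $B$, or equivalently the sets $[a_0]^R$ and $[b]^R$, to be definable so that $\mathsf{dim}$ is even defined on them --- and that is precisely what interpretability of $\widetilde{\mathcal{R}}_D(A/A_0)$ would give you. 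The paper proves Lemma~\ref{lem5.0} \emph{before} the ``$a^F=A/A_0$'' claim in Theorem~\ref{thm2D} for exactly this reason. (Even granting $B=A/A_0$, compactness would only give a bound if the equality held in every model of the theory, and the countable union $\bigcup_n B_n$ is not itself an interpretable set, so one cannot speak of its dimension in an arbitrary model.)

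The missing idea is a direct dimension argument that produces a bounded normal form for \emph{every} element at once, rather than a stabilisation argument. Fix $[a]\neq[\mathsf{0}]$ and $r\neq\mathbf{0}$ (with a specific representation, so $r$ is a definable map). Both $[a]^D$ and $[a]^{Dr}$ are definable of dimension~$1$. If $([a]^D-[a]^D)\cap([a]^{Dr}-[a]^{Dr})=\{[\mathsf{0}]\}$, then every element of $[a]^D+[a]^{Dr}$ has a unique decomposition, so $\mathsf{dim}([a]^D+[a]^{Dr})=2$, contradicting $\mathsf{dim}(A/A_0)=1$. Hence there are $d_1,\dots,d_4\in D$ with $[a]^{d_1-d_2}=[a]^{(d_3-d_4)r}\neq[\mathsf{0}]$, which forces $r=(d_3-d_4)^{-1}(d_1-d_2)$. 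This shows every nonzero $r$ lies in the definable set $\{(d_3-d_4)^{-1}(d_1-d_2):d_i\in D,\ d_3-d_4\neq\mathbf{0}\}$ modulo functional equality --- i.e.\ the chain stabilises with $N$ explicitly small --- and interpretability follows. Without this (or an equivalent) ingredient your proof does not close.
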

\begin{proof}
Pick some $[a]\neq[\mathsf{0}]$. For any $r\in\widetilde{\mathcal{R}}_D(A/A_0)$ with $r\neq\mathbf{0}$, consider the set $[a]^{Dr}$ which is the image of $[a]^D$ under $r$. Since $\mathsf{dim}([a]^D)=\mathsf{dim}(a^D)=1$ and $\mbox{ker}(r)$ is of dimension $0$ (as $r\neq\mathbf{0}$), we have that $[a]^{Dr}$ is of dimension 1. We claim that $$([a]^D-[a]^D)\cap ([a]^{Dr}-[a]^{Dr})\neq\{[\mathsf{0}]\}.$$ Indeed, if $([a]^D-[a]^D)\cap ([a]^{Dr}-[a]^{Dr})=\{[\mathsf{0}]\}$, then $[a]^{d_1}+[a]^{d_{2}r}=[a]^{d_3}+[a]^{d_4r}$ if and only if $[a]^{d_1}=[a]^{d_3}$ and $[a]^{d_2r}=[a]^{d_4r}$ for any $d_1,d_2,d_3,d_4\in D$. Hence any element in $[a]^D+[a]^{Dr}$ can be uniquely written as the sum. Therefore, $$\mathsf{dim}([a]^D+[a]^{Dr})=\mathsf{dim}([a]^D)+\mathsf{dim}([a]^{Dr})=2,$$ which contradicts the fact that $[a]^D+[a]^{Dr}$ is a subset of $A/A_0$ and $A/A_0$ is of dimension 1. Hence, there is some $d_1,d_2,d_3,d_4\in D$ such that $[a]^{d_1-d_2}=[a]^{(d_3-d_4)r}\neq [\mathsf{0}]$, i.e., $[a]^{(d_3-d_4)(d_3-d_4)^{-1}(d_1-d_2)}=[a]^{(d_3-d_4)r}$. Since $[a]\neq [\mathsf{0}]$ and $d_3-d_4$ is an automorphism, $[a]^{d_3-d_4}\neq [\mathsf{0}]$. Thus, $r=(d_3-d_4)^{-1}(d_1-d_2)$.

Therefore, $\widetilde{\mathcal{R}}_D(A/A_0)$ is a subset of $$E/\sim:=\{(d_3-d_4)^{-1}(d_1-d_2):d_1,d_2,d_3,d_4\in D, d_3-d_4\neq\mathbf{0}\}/\sim,$$ where $r\sim r'$ if $r$ and $r'$ induces the same endomorphism on $A/A_0$ for $r,r'\in E$.  On the other hand, $E/\sim$ is clearly a subset of $\widetilde{\mathcal{R}}_D(A/A_0)$. Since $E$ is definable, $\widetilde{\mathcal{R}}_D(A/A_0)$ is interpretable.
\end{proof}

Now we prove Theorem \ref{thm2D}.

\begin{proof}
By Lemma \ref{lem5.0}, $\widetilde{\mathcal{R}}_D(A/A_0)$ is an interpretable domain. Thus, there is some $J$ in the ultrafilter $\mathcal{U}$ such that $\widetilde{\mathcal{R}}_{D_i}(A_i/(A_0)_i)$ is also a finite domain in $(G_i,X_i)$ for any $i\in J$. Any finite domain is a field (Wedderburn's Little Theorem). Therefore, it is also true for all pseudofinite domain and we get $F:=\widetilde{\mathcal{R}}_D(A/A_0)$ is a field. It is an interpretable pseudofinite field.

Consider $D_0=\{d\in D: \forall a\in A, a^d\in a+A_0\}$. Take any $a\not\in A_0$, we know the set $[a]^D\subseteq A/A_0$ has dimension 1. Hence, $D/D_0$ has dimension at least 1.

By definition of $F=\widetilde{\mathcal{R}}_D(A/A_0)$ we know that $D/D_0$ embeds into $F^\times$. Hence $\mathsf{dim}(F)\geq 1$ and $D/D_0$ is commutative.

For any $[a]\neq [\mathsf{0}]$, let $[a]^{F}:=\{[a]^r:r\in F\}$. Define a map $i_a:F^+\to [a]^{F}$ by sending $r$ to $[a]^r$. It is clearly well-defined, surjective and is a group homomorphism. It is also injective. Indeed, if $[a]^r=[a]^{r'}$ for some $r,r'\in F$, then $[a]^{(r-r')}=[\mathsf{0}]$. Hence $r-r'=\mathbf{0}$, and we get $r=r'$. Therefore, $F^+$ is isomorphic to $[a]^{F}$. Note that $[a]^{F}$ is a definable subgroup of $A/A_0$. Moreover, it is of dimension 1, since $\mathsf{dim}(F)\geq 1$. We claim that $a^{F}=A/A_0$. If there is $[b]\in (A/A_0)\setminus [a]^{F}$, then $[b]^{F}$ is also isomorphic to $F^+$ and of dimension 1. As $[a]^{F}$ and $[b]^{F}$ are wide subgroups of $A$, we have $[a]^{F}\cap [b]^{F}$ is of dimension 1. In particular, there is $[c]\neq [\mathsf{0}]$. such that $[c]=[b]^{r_1}=[a]^{r_2}$ for some $r_1,r_2\neq\mathbf{0}$. Therefore, $[b]=[a]^{r_2r_1^{-1}}$ and $[b]\in [a]^{F}$, a contradiction.

Finally, we check that $\mathsf{dim}(D/D_0)=1$. By the proof before, we know that $D/D_0$ is of dimension at least 1. On the other hand, we also have $\mathsf{dim}(D/D_0)\leq \mathsf{dim}(F^\times)=\mathsf{dim}(F^+)=\mathsf{dim}(A)=1$. Hence, $\mathsf{dim}(D/D_0)=1$ as we have claimed.
\end{proof}

\begin{lemma}\label{lem field automorph}
Suppose $A$ is an abelian group of dimension 1 and $M$ is a group of automorphisms of $A$. Let $D\trianglelefteq M$ be a broad definable finite-by-abelian subgroup such that $A_0:=\{a\in A:\mathsf{dim}(a^D)=0\}$ is of dimension 0. Then $D$ satisfies the condition $(\clubsuit)$. Let $F:=\widetilde{\mathcal{R}}_D(A/A_0)$ be the interpretable pseudofinite field defined as in Theorem \ref{thm2D}. Then $M$ acts naturally by automorphisms on $F$ and $\mathsf{PStab}_M(F)/M_0$ embeds into $F^\times$ with $\mathsf{dim}(\mathsf{PStab}_M(F)/M_0)=1$, where $\mathsf{PStab}_M(F)$ is the point-wise stabilizer of $F$ and $$M_0:=\{m\in \mathsf{PStab}_M(F):\forall a\in A, ~a^m\in a+A_0\}.$$
\end{lemma}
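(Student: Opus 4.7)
The plan is to verify the hypotheses of Theorem~\ref{thm2D} for the pair $(D,A)$ and then promote the resulting field structure from $D$ to the whole of $M$, exploiting $D \trianglelefteq M$.

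First I would check that $A_0$ is a definable $M$-invariant subgroup of $A$. Definability and closure under the group operation follow from Lemma~\ref{claim-0dimension} applied inside the semidirect product $A \rtimes M$ with $H := A$. For $M$-invariance, the normality $D \trianglelefteq M$ gives $(a^m)^D = (a^D)^m$, and since $a \mapsto a^m$ is a bijection this preserves dimension zero. Next I would verify condition $(\clubsuit)$ exactly as in the remark after Theorem~\ref{thm2D}: $D$ finite-by-abelian implies $C_D(d_1,\ldots,d_n)$ has finite index in $D$, so for $a \notin A_0$ an orbit-stabilizer comparison gives $\mathsf{dim}(a^{C_D(d_1,\ldots,d_n)}) = \mathsf{dim}(a^D) = 1$, and this dimension descends to $A/A_0$ because $\mathsf{dim}(A_0) = 0$. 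Theorem~\ref{thm2D} then yields the interpretable pseudofinite field $F = \widetilde{\mathcal{R}}_D(A/A_0)$ together with an additive isomorphism $A/A_0 \cong F^+$ and an embedding $D/D_0 \hookrightarrow F^\times$ of dimension $1$.

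Next I would lift the $D$-action to an $M$-action on $F$. For $m \in M$ with induced automorphism $\tilde m$ of $A/A_0$, conjugation by $\tilde m$ on $\mathrm{End}(A/A_0)$ sends $D$ into itself (since $d^m \in D$) and sends each $(d_1 - d_2)^{-1}$ to $(d_1^m - d_2^m)^{-1}$, so the subring $F$ is preserved, yielding an action of $M$ on $F$ by ring automorphisms. Commutativity of $F$ then forces $D \subseteq \mathsf{PStab}_M(F)$.

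The main obstacle I anticipate is the identification of $\mathsf{PStab}_M(F)/M_0$ with a subgroup of $F^\times$. I would view $A/A_0$ as a one-dimensional $F$-vector space via the isomorphism $A/A_0 \cong F^+$ supplied by Theorem~\ref{thm2D} (with $F$ acting on itself by multiplication). An element $m \in \mathsf{PStab}_M(F)$ commutes with every $r \in F$ as an endomorphism of $A/A_0$, so $\tilde m$ is $F$-linear, and any $F$-linear automorphism of a one-dimensional $F$-vector space is multiplication by a unique $\chi(m) \in F^\times$. The resulting definable homomorphism $\chi : \mathsf{PStab}_M(F) \to F^\times$ has kernel exactly $M_0$ (those $m$ with $\tilde m = \mathrm{id}$), giving the embedding $\mathsf{PStab}_M(F)/M_0 \hookrightarrow F^\times$. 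Finally $D \cap M_0 = D_0$ produces the chain $D/D_0 \hookrightarrow \mathsf{PStab}_M(F)/M_0 \hookrightarrow F^\times$, and sandwiching between $\mathsf{dim}(D/D_0) = 1$ and $\mathsf{dim}(F^\times) = 1$ yields $\mathsf{dim}(\mathsf{PStab}_M(F)/M_0) = 1$.
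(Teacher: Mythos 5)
Your proof is correct, and it takes a genuinely different route from the paper at the key step. The paper handles the embedding $\mathsf{PStab}_M(F)/M_0\hookrightarrow F^\times$ by setting $T:=\mathsf{PStab}_M(F)$, verifying condition $(\clubsuit)$ for $T$ (using $DT_0/T_0\leq Z(T/T_0)$ to control centralizers), re-running Theorem~\ref{thm2D} on the triple $(A,A_0,T)$ to produce a field $\bar F$, and finally invoking pseudofiniteness to force $\bar F=F$. You instead observe that $A/A_0$ is a one-dimensional $F$-vector space (since $[a]^F=A/A_0$ for $[a]\neq[\mathsf 0]$), that $m\in\mathsf{PStab}_M(F)$ commutes with every $r\in F$ and hence acts $F$-linearly, and therefore acts by multiplication by a unique scalar $\chi(m)\in F^\times$; this gives the embedding directly with kernel $M_0$, and the dimension count follows by sandwiching between $D/D_0$ and $F^\times$. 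Your module-theoretic argument is cleaner and more self-contained for this step: it avoids the second application of Theorem~\ref{thm2D} and the pseudofiniteness argument that two fields sharing the same additive group are equal. One small point worth making explicit: to obtain $D/D_0\hookrightarrow\mathsf{PStab}_M(F)/M_0$ you correctly note $D\subseteq\mathsf{PStab}_M(F)$ from commutativity of $F$, and $D\cap M_0=D_0$ follows since $D\subseteq\mathsf{PStab}_M(F)$ and the defining conditions of $D_0$ and $M_0$ then agree on $D$.
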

\begin{proof}
Note that $A_0$ is definable by Lemma \ref{claim-0dimension}. And clearly, it is a $D$-invariant subgroup of $A$, so the induced action of $D$ on $A/A_0$ is well-defined. By the remark following Theorem \ref{thm2D}, we have that $D$ satisfies the condition $(\clubsuit)$.

Note that for any $a\in A$ and $m\in M$, if $\mathsf{dim}(a^D)=0$, then $\mathsf{dim}((a^m)^D)=\mathsf{dim}((a^D)^m)=0$. Therefore, $M$ also acts by automorphisms on $A/A_0$.

We define an action of $M$ on $F=\widetilde{\mathcal{R}}_D(A/A_0)$ by conjugation, i.e., for any $h\in M$ and $r\in F$, define $r^h:=h^{-1}rh$ (as the composition of automorphisms of $A/A_0$). We claim that $r^h\in F$ for any $r\in F$ and $h\in M$.

We prove by induction on the construction of $r\in F$:
\begin{enumerate}
\item
If $r=d\in D$, then $d^{h}=h^{-1}dh\in D$, as $D$ is normal in $M$.
\item
If $r=(d_1-d_2)^{-1}$ for some $d_1d_2^{-1}\not\in D_0$, then for any $[x],[y]\in A/A_0$, we have
\begin{align*}
 [x]^{r^h}=[y] &\text{~~ if and only if ~~} [x]^{h^{-1}(d_1-d_2)^{-1}h}=[y]\\ 
 &\text{~~ if and only if ~~} [x]=[y]^{h^{-1}(d_1-d_2)h} \\
 &\text{~~ if and only if ~~} [x]=[y]^{(d_1)^h-(d_2)^h}\\
 & \text{~~ if and only if ~~} [x]^{((d_1)^h-(d_2)^h)^{-1}}=[y].
\end{align*}

Thus, $r^h=((d_1)^h-(d_2)^h)^{-1}\in F$.
\item
If $r=r_1+r_2$, then $r^h=h(r_1+r_2)h^{-1}=(r_1)^h+(r_2)^h$. By induction hypothesis $(r_1)^h,(r_2)^h\in F$, hence $r^{h}\in F$.
\item
If $r=r_1r_2$, then $r^h=hr_1r_2h^{-1}=(r_1)^h(r_2)^h$. Again by induction hypothesis $(r_1)^h,(r_2)^h\in F$, hence $r^h\in F$.
\end{enumerate}

Clearly, for any $h\in M$ the map $(\cdot)^h$ is a field endomorphism, whence by pseudofiniteness, $(\cdot)^h$ is surjective, whence a field automorphism of $F$.

Consider the group $T:=\mathsf{PStab}_M(F)$. Let $T_0:=\{t\in T:\forall a\in A,~a^t\in a+A_0\}$. Note that $T_0$ is normal in $T$ as $T$ acts on $A_0$. Since $D/D_0$ is abelian and $D_0\subseteq T_0$, we have $DT_0/T_0\leq Z(T/T_0)$. For any $m_1,\ldots,m_n\in T$ and $a\not\in A_0$, we have $[a]^{C_{T/T_0}([m_1],\ldots,[m_n])}\supseteq [a^D]$, thus $\mathsf{dim}([a]^{C_{T/T_0}([m_1],\ldots,[m_n])})=1$. Therefore, we may apply Theorem \ref{thm2D} with $A, A_0$ and  $T$ and get an interpretable pseudofinite field $\bar{F}$ such that $A/A_0\simeq \bar{F}^+$, $T/T_0$ embeds into $\bar{F}^\times$ and $\mathsf{dim}(T/T_0)=1$. Note that $F\subseteq\bar{F}$ and $F^+\simeq A/A_0\simeq \bar{F}^+$, by pseudofiniteness $\bar{F}=F$.
\end{proof}

We now specify the case for $(G,X)\in\mathcal{S}$ with $\mathsf{dim}(G)=2$. Basically, we will apply Theorem \ref{thm2D} to get the interpretable field. However, we still need to find a definable normal abelian subgroup in $G$. This is the aim of the following two lemmas.

\begin{lemma}\label{lem1}
Let $(G,X)\in\mathcal{S}$ with $\mathsf{dim}(G)=2$. Then $G$ has no definable wide finite-by-abelian subgroup.
\end{lemma}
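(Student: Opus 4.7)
The plan is to derive the lemma almost immediately by contradiction, feeding an appropriately chosen element into Lemma \ref{lem3.0} with $K=G$. Suppose, toward a contradiction, that $C\leq G$ is a definable wide finite-by-abelian subgroup, so $\mathsf{dim}(C)=2=\mathsf{dim}(G)$. Since $C$ is wide it is in particular infinite, so I can fix some $c\in C\setminus\{\mathsf{1}\}$. Applying Lemma \ref{claim 1.0} inside the finite-by-abelian group $C$ tells me that the centralizer $C_C(c)$ is wide in $C$, i.e.\ $\mathsf{dim}(C_C(c))=2$. Since $C_C(c)\subseteq C_G(c)$, this forces $\mathsf{dim}(C_G(c))=2=\mathsf{dim}(G)$, so $C_G(c)$ is wide in $G$.

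Now I would invoke Lemma \ref{lem3.0} with $K:=G$: we have $G\trianglelefteq G$ and $\mathsf{dim}(G)=2\geq 2$, so the lemma forbids any non-identity element of $G$ from having wide centralizer in $G$. This directly contradicts the wideness of $C_G(c)$ established above, completing the proof. There is essentially no genuine obstacle here, since the real content — Schlichting's theorem on uniformly commensurable families of centralizer-intersections, Hempel's finiteness result for the commutator $[\widetilde{Z}(N),\widetilde{C}_N(\widetilde{Z}(N))]$, and the final orbit-stabilizer contradiction against $\mathsf{dim}(X)=1$ — has already been packaged into Lemma \ref{lem3.0}. The only thing this lemma contributes is the observation that a wide finite-by-abelian subgroup provides, via Lemma \ref{claim 1.0}, plentiful non-identity elements with wide $G$-centralizers, which the hypothesis $\mathsf{dim}(G)=2$ cannot tolerate.
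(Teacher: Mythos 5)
Your proof is correct, and it is a genuinely different and shorter route than the one taken in the paper. The paper's own proof of Lemma~\ref{lem1} does not invoke Lemma~\ref{lem3.0} at all: it works directly with the $\widetilde{\mathfrak{M}_c}$-condition to find a centralizer $D$, minimal up to finite index, almost containing the putative wide finite-by-abelian subgroup $A$; it then shows $\widetilde{Z}(D)$ is wide, passes to a definable normal subgroup via Lemma~\ref{claim 2} and Schlichting, uses Lemmas~\ref{claim 4} and~\ref{claim 3} to show the resulting almost-center is a wide definable normal abelian subgroup, and finally contradicts Lemma~\ref{claim 5.1} (which forces such a subgroup to have dimension~$1$, not~$2$). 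Your approach instead notices that Lemma~\ref{claim 1.0} applied inside $C$ produces a non-identity $c$ with $C_G(c)$ wide, and then reads off the contradiction from Lemma~\ref{lem3.0} with $K=G$. Since Lemma~\ref{lem3.0} appears earlier in the paper and its proof does not depend on Lemma~\ref{lem1}, there is no circularity. Your argument buys brevity by delegating the hard work (Schlichting, Hempel's result on $[\widetilde{Z}(N),\widetilde{C}_N(\widetilde{Z}(N))]$, and the orbit--stabilizer count against $\mathsf{dim}(X)=1$) to the already-proved Lemma~\ref{lem3.0}; the paper's version is slightly more self-contained in that it sidesteps Hempel's Proposition~4.23 for this particular lemma, but both arguments rest on the same foundations.
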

\begin{proof}
Suppose $G$ has such a subgroup $A$. By the $\widetilde{\mathfrak{M}_c}$-condition, we can take $D:=C_G(\bar{g})$ minimal up to finite index for some finite tuple $\bar{g}$ in $G$ such that $[A:A\cap D]<\infty$. 

We claim that $A\cap D\leq\widetilde{Z}(D)$. As $A$ is finite-by-abelian, we have $[A:C_A(a)]<\infty$ for any $a\in A\cap D$. Together with $[A:A\cap D]<\infty$, we get $[A:C_A(a)\cap D]< \infty$. Since $C_A(a)\cap D\leq C_D(a)$, also $[A:A\cap C_D(a)]<\infty$. By minimality of $D$ we have $[D:C_D(a)]<\infty$. Hence, $a\in \widetilde{Z}(D)$ and $A\cap D\leq \widetilde{Z}(D)$ as claimed. Since $A\cap D$ has finite index in $A$ and $A$ is wide, $\widetilde{Z}(D)$ is also wide in $G$.

By Lemma \ref{claim 2}, there is a definable wide normal subgroup $N\trianglelefteq G$ such that $N$ is commensurable with $\bigcap_{i\leq k}D^{g_i}$ for some $g_0,\ldots,g_k\in G$. By Lemma \ref{claim 4}, we have $\bigcap_{i\leq k}\widetilde{Z}(D)^{g_i}\leq \widetilde{Z}(\bigcap_{i\leq k}D^{g_i})$. Since $\widetilde{Z}(D)$ is wide, so is $\bigcap_{i\leq k}\widetilde{Z}(D)^{g_i}$, hence also $\widetilde{Z}(\bigcap_{i\leq k}D^{g_i})$. Since $N$ is commensurable with $\bigcap_{i\leq k}D^{g_i}$, we get $\mathsf{dim}(\widetilde{Z}(N))=\mathsf{dim}(\widetilde{Z}(\bigcap_{i\leq k}D^{g_i}))=2$ by Lemma \ref{claim 3}. Thus, $\widetilde{Z}(N)$ is a definable normal finite-by-abelian subgroup of $G$. Since $\widetilde{Z}(N)'$ is finite and normal in $G$, it is trivial by definably primitivity. Thus, $\widetilde{Z}(N)$ is a definable normal abelian subgroup of $G$. By Lemma \ref{claim 5.1}, $\mathsf{dim}(\widetilde{Z}(N))=1$, contradicting that $\mathsf{dim}(\widetilde{Z}(N))=2$.
\end{proof}

\begin{lemma}\label{lem2}
Let $(G,X)\in\mathcal{S}$ with $\mathsf{dim}(G)=2$. Assume that the definable sections of $G$ also satisfy the $\widetilde{\mathfrak{M}_c}$-condition. Then $G$ has a definable normal abelian subgroup $A$ of dimension $1$.
\end{lemma}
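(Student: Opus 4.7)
The plan is to construct the desired subgroup in three stages: first produce a definable abelian subgroup $A_0\leq G$ of dimension $1$; then upgrade it to a $G$-normal subgroup via Schlichting's Theorem applied to a commensurable centraliser; finally strip away the finite derived part using definable primitivity. For the first stage, Fact \ref{fact-frank-soluble}(2) provides a definable soluble subgroup $D\leq G$ of dimension $2$; since definable sections of $G$ satisfy $\widetilde{\mathfrak{M}_c}$, each derived subgroup $D^{(i)}$ is definable. Let $k$ be the largest integer with $\mathsf{dim}(D^{(k)})=2$; as $D^{(k)}/D^{(k+1)}$ is abelian and Lemma \ref{lem1} forbids a definable wide finite-by-abelian subgroup of $G$, one must have $\mathsf{dim}(D^{(k+1)})=1$. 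Fact \ref{fact-frank-finitebyabelian}(2) applied to $D^{(k+1)}$ then yields a definable characteristic finite-by-abelian subgroup $B\leq D^{(k+1)}$ of dimension $1$, and $A_0:=Z(B)$ is a definable abelian subgroup of $G$ of dimension $1$.

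The second stage is the main obstacle. Pick $a\in A_0\setminus\{\mathsf{1}\}$ and set $D_a:=C_G(a)$; since $A_0\leq D_a$, we have $\mathsf{dim}(D_a)\geq 1$. The case $\mathsf{dim}(D_a)=2$ leads to a contradiction: Lemma \ref{claim 2} would produce a definable wide $G$-normal subgroup $N\trianglelefteq G$ commensurable with $E:=\bigcap_i D_a^{g_i}$; since $a\in E$ and $N\cap E$ has finite index in $N$, some nontrivial power $a^m$ lies in $N$ with centraliser $C_N(a^m)\supseteq N\cap E$ wide in $N$, contradicting Lemma \ref{lem3.0}. Hence $\mathsf{dim}(D_a)=1$, and $A_0$ is commensurable with $D_a$. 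Running the argument from the proof of Lemma \ref{claim 2} on $D_a$ (the production of uniformly commensurable conjugates uses only the $\widetilde{\mathfrak{M}_c}$-condition, not wideness of $D_a$) yields $g_0,\ldots,g_k\in G$ such that the $G$-conjugates of $E:=C_G(a^{g_0},\ldots,a^{g_k})$ form a uniformly commensurable family, and Schlichting's Theorem produces a definable $G$-normal subgroup $N\trianglelefteq G$ commensurable with $E$. By Lemma \ref{claim 3}, the almost-centre $\widetilde{Z}(N)$, which is definable by Remark \ref{remark-Z(G)} and $G$-normal as characteristic in $N$, is commensurable with $\widetilde{Z}(E)$; since $A_0$ is abelian and commensurable with $D_a$, a finite-index subgroup of $A_0$ lies in $\widetilde{Z}(E)$, so $\widetilde{Z}(N)$ is a definable $G$-normal finite-by-abelian subgroup of $G$ of dimension $1$.

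For the third stage, the derived subgroup $\widetilde{Z}(N)'$ is finite and $G$-normal, hence trivial by definable primitivity of $(G,X)$, since a finite subgroup cannot act transitively on the dimension-$1$ set $X$. Therefore $A:=\widetilde{Z}(N)$ is the desired definable normal abelian subgroup of $G$ of dimension $1$. The hardest step will be verifying that $\widetilde{Z}(N)$ has dimension exactly $1$: this requires carefully tracking how $A_0$ sits inside $D_a$, $E$, and their $G$-conjugates through Lemmas \ref{claim 3} and \ref{claim 4}, in order to control the almost-centre of the Schlichting closure via the abelian content propagated from $A_0$.
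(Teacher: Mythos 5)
Your plan diverges from the paper's route: the paper invokes Fact~\ref{fact-frank-soluble}(1) (a broad finite-by-abelian subgroup with wide normaliser) and then refers to the two explicit cases in the construction of that subgroup from the proof of Theorem~13 of~\cite{Mcgroups}, handling each case separately; you instead start from Fact~\ref{fact-frank-soluble}(2) and try to descend along a derived series. Unfortunately your version has several genuine gaps, and the final one is fatal.

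In stage~1, the assertion that the derived subgroups $D^{(i)}$ of a definable soluble group are definable is not justified by the $\widetilde{\mathfrak{M}_c}$-condition; in this setting (no Indecomposability Theorem) derived subgroups need not be definable. Moreover, even granting definability, the claim that ``$\mathsf{dim}(D^{(k+1)})=1$'' does not follow from Lemma~\ref{lem1}: if $\mathsf{dim}(D^{(k+1)})=0$, that does \emph{not} make $D^{(k)}$ finite-by-abelian, since dimension~$0$ does not imply finite here (Remark~\ref{remark-dim}). And it is not clear that $Z(B)$ is wide in a finite-by-abelian group $B$: $\widetilde{Z}(B)=B$, but the exact centre $Z(B)$ can have infinite index.

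The decisive gap is in stage~2, in the step ``$\mathsf{dim}(D_a)=1$, and $A_0$ is commensurable with $D_a$.'' From $A_0\leq D_a$ and $\mathsf{dim}(A_0)=\mathsf{dim}(D_a)=1$ you can only conclude $\mathsf{dim}(D_a/A_0)=0$, which does \emph{not} give $[D_a:A_0]<\infty$ in this framework. Without that, your later claim that a finite-index subgroup of $A_0$ sits inside $\widetilde{Z}(E)$ (and hence that $\widetilde{Z}(N)$ is broad) has no support; indeed there is nothing in your argument preventing $E=C_G(a^{g_0},\ldots,a^{g_k})$ and its Schlichting closure $N$ from having dimension~$0$, in which case $N$ is trivial by definable primitivity and $\widetilde{Z}(N)$ is not the group you want. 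Your own closing caveat identifies exactly this step as the hard one, but the commensurability you rely on to resolve it is false. The paper sidesteps this by exploiting the \emph{wide normaliser} from Fact~\ref{fact-frank-soluble}(1): it applies Fact~\ref{fact-frank-finitebyabelian} to the dimension-$1$ section $N_G(\widetilde{Z}(D))/\widetilde{Z}(D)$ to manufacture a wide finite-by-abelian subgroup and contradict Lemma~\ref{lem1}, a mechanism your proof does not have.
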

\begin{proof}
By Fact \ref{fact-frank-soluble}(1), $G$ has a broad definable finite-by-abelian subgroup $C$ whose normalizer is wide. We refer to the proof in \cite[Theorem 13]{Mcgroups}, from the construction of $C$ in the proof, there are two cases. The first case is that
$C$ is normal in $G$. Then $C$ is not wide by Lemma \ref{lem1}, so $\mathsf{dim}(C)=1$. Since $C'$ is definable normal in $G$ of dimension 0, it is trivial. Therefore, $A:=C$ is a definable normal abelian group of dimension 1.

The second case is that
$C:=\widetilde{Z}(D)$ where $D$ is commensurable with $E=C_G(\bar{b})$ for some $\bar{b}\in G^n$ and $\mathsf{dim}(D)\geq 1$. By the $\widetilde{\mathfrak{M}_c}$-condition and Schlichting's Theorem, there is a definable normal subgroup $H$ of $G$, such that $H$ is commensurable with $\bigcap_{i\leq k}E^{g_i}$, for some $g_0,\ldots,g_k\in G$. We may assume that $\mathsf{dim}(\widetilde{Z}(H))=\mathsf{dim}(\widetilde{Z}(\bigcap_{i\leq k}E^{g_i})=0$, for otherwise, we are in the previous case. Since $H$ is normal in $G$ and $\widetilde{Z}(H)$ is characteristic in $H$, $\widetilde{Z}(H)$ is a definable normal subgroup of $G$ of dimension 0. Hence $\widetilde{Z}(H)$ cannot act transitively on $X$ and is trivial by Lemma \ref{lem-trivialtansitive}. By Lemma \ref{claim 4} and Lemma \ref{claim 3}, we get $\bigcap_{i\leq k}\widetilde{Z}(E^{g_i})\leq \widetilde{Z}(\bigcap_{i\leq k}E^{g_i})$ and $\widetilde{Z}(\bigcap_{i\leq k}E^{g_i})$ is commensurable with $\widetilde{Z}(H)$. Hence $\bigcap_{i\leq k}\widetilde{Z}(E^{g_i})=\bigcap_{i\leq k}\widetilde{Z}(E)^{g_i}$ is finite.

As $D$ is commensurable with $E$, we have $\widetilde{Z}(D)$ is commensrable with $\widetilde{Z}(E)$. We may assume $[\widetilde{Z}(D):\widetilde{Z}(D)\cap \widetilde{Z}(E)]\leq \ell$ for some $\ell\in\mathbb{N}$. Then \begin{align*}
&\left[\bigcap_{i\leq k}\widetilde{Z}(D)^{g_i}:\left(\bigcap_{i\leq k}\widetilde{Z}(D)^{g_i}\right)\cap \left(\bigcap_{i\leq k}\widetilde{Z}(E)^{g_i}\right)\right]
\\
\leq &\prod_{j\leq k}\left[\bigcap_{i\leq k}\widetilde{Z}(D)^{g_i}:\left(\bigcap_{i\leq k}\widetilde{Z}(D)^{g_i}\right)\cap \widetilde{Z}(E)^{g_j}\right]\\
= &\prod_{j\leq k}\left[\bigcap_{i\leq k}\widetilde{Z}(D)^{g_i}:\left(\bigcap_{i\neq j}\widetilde{Z}(D)^{g_i}\right)\cap \left(\widetilde{Z}(D)^{g_j}\cap\widetilde{Z}(E)^{g_j}\right)\right]\\
= &\prod_{j\leq k}\left[\widetilde{Z}(D)^{g_j}:\widetilde{Z}(D)^{g_j}\cap\widetilde{Z}(E)^{g_j}\right]\leq\ell^{k+1}.
\end{align*}
As $\bigcap_{i\leq k}\widetilde{Z}(E)^{g_i}$ is finite, we get $\bigcap_{i\leq k}\widetilde{Z}(D)^{g_i}$ is also finite.

By assumption, $N_G(\widetilde{Z}(D))$ is wide, hence $\mathsf{dim}(N_G(\widetilde{Z}(D))/\widetilde{Z}(D))=1$. By Fact \ref{fact-frank-finitebyabelian}, there is a definable $B\leq N_G(\widetilde{Z}(D))$ such that $B/\widetilde{Z}(D)$ is broad finite-by-abelian. Hence, $B$ is wide in $G$. Clearly, $B^{g_i}/\widetilde{Z}(D)^{g_i}$ is also broad finite-by-abelian for any $g_i$. By Lemma \ref{claim 5}, the group $\bigcap_{i\leq k}B^{g_i}/\bigcap_{i\leq k}\widetilde{Z}(D)^{g_i}$ is finite-by-abelian. Since $\bigcap_{i\leq k}\widetilde{Z}(D)^{g_i}$ is finite, $\bigcap_{i\leq k}B^{g_i}$ is finite-by-abelian. However, $\bigcap_{i\leq k}B^{g_i}$ is definable and wide in $G$, contradicting Lemma \ref{lem1}.

\end{proof}

Now we can conclude the dimension-2 case.
\begin{theorem}\label{thm-dim2}
Let $(G,X)\in\mathcal{S}$ with $\mathsf{dim}(G)=2$. Suppose the definable sections of $G$ satisfy the $\widetilde{\mathfrak{M}_c}$-condition. Then $G=A\rtimes G_x$ and there is an interpretable pseudofinite field $F$ such that $A\simeq F^+$ and $D$ embeds into $F^\times$ for some wide definable subgroup $D\trianglelefteq G_x$. 

Moreover, $G_x$ induces a group of automorphisms on $F$.
\end{theorem}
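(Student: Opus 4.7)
The plan is to combine the preparatory results: Lemma \ref{lem2} to produce a definable normal abelian subgroup, Lemma \ref{claim 5.1} for the semidirect decomposition, and Lemma \ref{lem field automorph} to interpret the field.

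First, I would apply Lemma \ref{lem2} to obtain a definable normal abelian subgroup $A\trianglelefteq G$ with $\mathsf{dim}(A)=1$. Lemma \ref{claim 5.1} then gives $G=A\rtimes G_x$, where $G_x:=\mathsf{Stab}_G(x)$ for some $x\in X$, $A$ acts regularly on $X$, and $G_x$ acts on $A\simeq X$ by conjugation. This action is faithful (because $G$ acts faithfully on $X$), and by additivity of $\mathsf{dim}$ we have $\mathsf{dim}(G_x)=1$. Since definable sections of $G$ satisfy $\widetilde{\mathfrak{M}_c}$, Fact \ref{fact-frank-finitebyabelian}(2) applied to $G_x$ produces a definable characteristic---hence normal---finite-by-abelian subgroup $C\trianglelefteq G_x$ with $\mathsf{dim}(C)=1$.

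The core step is to verify the hypothesis of Lemma \ref{lem field automorph} with $M=G_x$ and $D=C$: that $A_0:=\{a\in A:\mathsf{dim}(a^C)=0\}$ has dimension $0$. By Lemma \ref{claim-0dimension}, $A_0$ is a definable subgroup of $A$. Since $C\trianglelefteq G_x$, a direct computation using $C^g=C$ for $g\in G_x$ shows that $A_0$ is $G_x$-invariant, and it is $A$-invariant since $A$ is abelian, so $A_0\trianglelefteq G$. Definable primitivity of $(G,X)$ together with regularity of the $A$-action on $X$ force $A_0\in\{\{\mathsf{1}\},A\}$. To rule out $A_0=A$, I would invoke Lemma \ref{lem3.0} with $K=G$: for any $a\in A\setminus\{\mathsf{1}\}$, $C_G(a)$ is not wide in $G$, so $\mathsf{dim}(C_G(a))=1$ (the lower bound coming from $A\leq C_G(a)$). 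Writing $C_G(a)=A\rtimes C_{G_x}(a)$, additivity gives $\mathsf{dim}(C_{G_x}(a))=0$, hence $\mathsf{dim}(C_C(a))=0$ and $\mathsf{dim}(a^C)=\mathsf{dim}(C)-\mathsf{dim}(C_C(a))=1$. Thus $a\notin A_0$, so $A_0=\{\mathsf{1}\}$.

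With $A_0=\{\mathsf{1}\}$, Lemma \ref{lem field automorph} yields an interpretable pseudofinite field $F=\widetilde{\mathcal{R}}_C(A)$ with $F^+\simeq A/A_0=A$, an action of $G_x$ on $F$ by field automorphisms, and an embedding of $\mathsf{PStab}_{G_x}(F)/M_0$ into $F^\times$ of dimension $1$, where $M_0=\{m\in\mathsf{PStab}_{G_x}(F):a^m=a\text{ for all }a\in A\}$. Since the $G_x$-action on $A$ is faithful, $M_0$ is trivial; setting $D:=\mathsf{PStab}_{G_x}(F)$, this gives a definable wide normal subgroup of $G_x$ embedding into $F^\times$, with $G_x/D$ acting faithfully on $F$ as field automorphisms, which delivers all the claims of the theorem. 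The main obstacle I anticipate is the verification $\mathsf{dim}(A_0)=0$: without the sharp structural input of Lemma \ref{lem3.0}, the degenerate case $A_0=A$---in which $C$ acts on $A$ with only finite orbits and the ring $\widetilde{\mathcal{R}}_C(A)$ fails to form a field---cannot be excluded.
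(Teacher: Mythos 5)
Your proof is correct and follows essentially the same route as the paper: Lemma \ref{lem2} for a definable normal abelian $A$, Lemma \ref{claim 5.1} for the decomposition $G = A \rtimes G_x$, Fact \ref{fact-frank-finitebyabelian}(2) for a wide finite-by-abelian normal subgroup of $G_x$, Lemma \ref{lem3.0} to force $A_0 = \{\mathsf{1}\}$, and Theorem \ref{thm2D}/Lemma \ref{lem field automorph} to interpret the field. The only difference is a matter of presentation: the paper's verification that $A_0 = \{\mathsf{0}\}$ takes $a \in A_0$, observes $A \times C_D(a) \subseteq C_G(a)$ is wide, and concludes $a = \mathsf{0}$ from Lemma \ref{lem3.0}; you argue the contrapositive, invoking Lemma \ref{lem3.0} first to get $\mathsf{dim}(C_G(a)) = 1$ for $a \neq \mathsf{1}$ and then computing $\mathsf{dim}(a^C) = 1$. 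These are logically equivalent, and your intermediate observation that $A_0$ is normal in $G$ so that primitivity forces $A_0 \in \{\{\mathsf{1}\}, A\}$ is correct but redundant, since your direct computation already rules out every nontrivial element of $A$ from belonging to $A_0$.
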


\begin{proof}
By Lemma \ref{lem2}, $G$ has a definable normal abelian subgroup $A$. By Lemma \ref{claim 5.1} we have $G=A\rtimes G_x$ and $G_x$ acts on $A$ by conjugation, where $G_x$ is the point-stabilizer $\mathsf{Stab}_G(x)$. By Fact \ref{fact-frank-finitebyabelian}(2), $G_x$ has a definable finite-by-abelian normal subgroup $D$.  For any $a\in A$, if $\mathsf{dim}(a^D)=0$, then $\mathsf{dim}(C_D(a))=\mathsf{dim}(D)=1$. Since $A\times C_D(a)\subseteq C_G(a)$, we get $\mathsf{dim}(C_G(a))\geq \mathsf{dim}(A\rtimes C_D(a))=2=\mathsf{dim}(G)$. So $a=\mathsf{0}$ by Lemma \ref{lem3.0}. Therefore, $A_0:=\{a\in A:\mathsf{dim}(a^D)=0\}=\{\mathsf{0}\}$.  Applying Theorem \ref{thm2D} and Lemma \ref{lem field automorph} with $A_0=\{\mathsf{0}\}$ and $D_0=\{\mathsf{1}\}$, we get the desired result.
%a pseudofinite field $K=\widetilde{End}_D(A)$ and $G_x$ acts on $K$ by automorphisms. Moreover, $D\leq PS_{G_x}(K)$ embeds into $K^\times$. As $K$ is the ring of endomorphisms of $A$ generated by $D\cup\{(d-d')^{-1}:d,d'\in D,d\neq d'\}$, we have $PS_{G_x}(K)=C_{G_x}(D)$ . Let $H:=C_{G_{x}}(D)$. Then $H$ embeds into $K^\times$, whence is abelian. As $H=C_{G_x}(D)$ and $D\leq H$, we get $H$ is maximal abelian in $G_x$.
\end{proof}

If we add some extra condition on sets of dimension 0, we can also make the full stabilizer $G_{x}$ embeds into $F^\times$ as in Fact \ref{Elwes-main}.  

\begin{lemma}\label{lem-Btrivial}
Suppose an infinite field $F$ and a group $B$ of automorphisms of $F$ are interpretable in a theory with an additive integer-valued dimension $\mathsf{dim}$ such that $\mathsf{dim}(F)=1$. Then $B$ is either trivial or infinite.
\end{lemma}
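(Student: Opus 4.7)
My approach is to rule out the remaining case, that $B$ is finite and non-trivial, by a direct Galois-theoretic and dimension-counting argument. Suppose for contradiction that $n:=|B|\geq 2$. The fixed subfield
\[
F^B=\{x\in F: x^b=x\text{ for all }b\in B\}
\]
is interpretable, being cut out by a finite conjunction as $B$ is finite. By Artin's theorem, $F/F^B$ is a Galois extension of degree $n$, so in particular $F$ is an $n$-dimensional vector space over $F^B$. Fixing any $F^B$-basis $\alpha_1,\ldots,\alpha_n$ of $F$, the $F^B$-linear map
\[
\phi\colon (F^B)^n\longrightarrow F,\qquad (c_1,\ldots,c_n)\longmapsto c_1\alpha_1+\cdots+c_n\alpha_n,
\]
is an interpretable bijection with parameters $\alpha_1,\ldots,\alpha_n$.

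I then push this bijection through the dimension axioms. By invariance of $\mathsf{dim}$ under interpretable bijection, $\mathsf{dim}(F)=\mathsf{dim}((F^B)^n)$. Iterating the additivity of $\mathsf{dim}$ along the coordinate projections $(F^B)^k\to (F^B)^{k-1}$, whose fibres are copies of $F^B$, gives $\mathsf{dim}((F^B)^n)=n\cdot\mathsf{dim}(F^B)$. Since $F^B\subseteq F$, the union axiom gives $\mathsf{dim}(F^B)\leq\mathsf{dim}(F)=1$, and because $\mathsf{dim}$ is integer-valued we must have $\mathsf{dim}(F^B)\in\{0,1\}$. Either possibility yields a contradiction: if $\mathsf{dim}(F^B)=0$ then $\mathsf{dim}(F)=0$, whereas if $\mathsf{dim}(F^B)=1$ then $\mathsf{dim}(F)=n\geq 2$. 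Both contradict $\mathsf{dim}(F)=1$, so $B$ must be trivial or infinite.

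I do not foresee any substantial obstacle. The only subtle point is that the equality $\mathsf{dim}((F^B)^n)=n\cdot\mathsf{dim}(F^B)$ genuinely requires the full additivity hypothesis (the fibration axiom alone gives only the inequality $\geq$); this is why the hypothesis of additivity is essential to the statement. Apart from this, the argument uses only basic Galois theory and the four axioms for $\mathsf{dim}$, with no need for structural group theory or the CFSG.
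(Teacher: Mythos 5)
Your proof is correct and takes essentially the same approach as the paper: both rely on the Galois-theoretic fact that $F$ is a finite-dimensional vector space over the fixed field of a finite group of automorphisms, and then push the resulting interpretable bijection through the additivity of $\mathsf{dim}$ to force that vector-space degree to be $1$. The only (cosmetic) difference is that the paper argues per element $\sigma\in B$ using $K=\mathrm{fix}(\sigma)$ rather than the full fixed field $F^B$; your version is, if anything, slightly more explicit in ruling out the case $\mathsf{dim}(F^B)=0$.
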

\begin{proof}
If $B$ is finite, then any $\sigma\in B$ must have finite order. Thus, the fixed field  $fix(\sigma)$ is of finite index in $F$. As $1=\mathsf{dim}(F)=[F:fix(\sigma)]\cdot\mathsf{dim}(F)$, we get $fix(\sigma)=F$. Thus, $B$ is trivial. 
\end{proof} 
%For each element $e\in E_K$, the group generated by $e$, $E_e:=\langle e\rangle$, is finite of order $m$ for some $m\in\omega$. We follow the proof of \cite[Section 5.2, Claim 5]{Elwes} to show that $E_e$ is trivial. Let $H$ be the fixed field of $e$. There is $J\in \mathcal{U}$, such that for all $i\in J$, $(E_e)_i$ is a group of order $m$ of automorphisms of the finite field $K_i$, and $H_i$ is the fixed field of $e_i$. Hence, each $K_i$ is a definable $m$-dimensional vector space over $H_i$. Therefore, $\mathsf{dim}(K)=m\mathsf{dim}(H)$. By Theorem \ref{thm2D}, $\mathsf{dim}(K)=1$, we have $m=1$ and $K=H$. Thus, $(\cdot)^h:K\to K$ fixes the whole of $K$ for any $h\in G_{x}$.
\begin{corollary}\label{cor1}
Let $(G,X)\in\mathcal{S}$ with $\mathsf{dim}(G)=2$. Suppose the definable sections of $G$ satisfy $\widetilde{\mathfrak{M}_c}$-condition, and that the dimension-0 group $E_F:=G_{x}/\mathsf{PStab}_{G_{x}}(F)$ is finite. Then $G_{x}$ embeds into $F^\times$.
\end{corollary}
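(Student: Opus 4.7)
The plan is to combine the structure from Theorem~\ref{thm-dim2} with Lemma~\ref{lem field automorph} applied to $M := G_x$, and then kill the quotient $E_F$ using Lemma~\ref{lem-Btrivial}.

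First, I would recall from the proof of Theorem~\ref{thm-dim2} that $G = A \rtimes G_x$, that $D \trianglelefteq G_x$ is a broad definable finite-by-abelian subgroup, and that $A_0 := \{a \in A : \mathsf{dim}(a^D) = 0\} = \{\mathsf{0}\}$. These are exactly the hypotheses needed to apply Lemma~\ref{lem field automorph} with $M = G_x$ (acting on $A$ by conjugation, since $A \trianglelefteq G$). The conclusion yields an embedding $\mathsf{PStab}_{G_x}(F)/M_0 \hookrightarrow F^\times$, where
\[
M_0 \;=\; \{m \in \mathsf{PStab}_{G_x}(F) : \forall a \in A,\ a^m \in a + A_0\} \;=\; \{g \in G_x : g \text{ centralizes } A\}.
\]
Since $G_x$ acts on $A$ by conjugation and, under the identification $a \mapsto x^a$ from Lemma~\ref{claim 5.1}, this coincides with the restriction of the (faithful) permutation action on $X$, any $g \in G_x$ centralizing $A$ fixes $X$ pointwise and is hence the identity. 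Thus $M_0 = \{\mathsf{1}\}$ and $\mathsf{PStab}_{G_x}(F)$ embeds into $F^\times$.

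Second, I would interpret $E_F = G_x/\mathsf{PStab}_{G_x}(F)$ as the image of the conjugation action of $G_x$ on $F$, that is, as a subgroup of $\mathrm{Aut}(F)$; this is precisely the ``$G_x$ induces a group of automorphisms on $F$'' clause of Theorem~\ref{thm-dim2}. Because $F$ is an interpretable pseudofinite field of dimension $1$ and $E_F$ is assumed to be finite, Lemma~\ref{lem-Btrivial} forces $E_F$ to be trivial, so $G_x = \mathsf{PStab}_{G_x}(F)$. Combining with the previous paragraph yields $G_x \hookrightarrow F^\times$.

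The only slightly delicate point is the identification $M_0 = \{\mathsf{1}\}$, which relies on the standing convention that $(G,X)$ is a faithful permutation group; once this is granted, the corollary is a direct concatenation of the two lemmas above, with no further dimension-counting or new combinatorial input required.
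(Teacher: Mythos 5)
Your proposal is correct and takes essentially the same route as the paper: apply Theorem~\ref{thm-dim2} to obtain the field $F$ and the automorphism group $E_F$, kill $E_F$ via Lemma~\ref{lem-Btrivial}, and then use Lemma~\ref{lem field automorph} to embed $\mathsf{PStab}_{G_x}(F)=G_x$ into $F^\times$. Your explicit verification that $M_0=\{\mathsf{1}\}$ via faithfulness of the permutation action is a detail the paper leaves implicit, and it is a welcome clarification rather than a deviation.
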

\begin{proof}
By the argument before, $(G,X)$ interprets a pseudofinite field $F$ of dimension $1$ and a group of field automorphisms $E_F:=G_{x}/\mathsf{PStab}_{G_{x}}(F)$.  By assumption, the group $E_F$ is finite, hence is trivial by Lemma \ref{lem-Btrivial}.
By Lemma \ref{lem field automorph}, $G_{x}=\mathsf{PStab}_{G_{x}}(F)$ embeds into $F^\times$. 
%Therefore, for any $h\in G_{x_0}$ and $d\in D$, we have $d^h=d$, whence $D\leq Z(G_{x_0})$. For any $n\in\mathbb{N}$, $d_1,\ldots,d_n\in G_{x_0}$ and $a\in A\setminus\{0\}$, we have $\mathsf{dim}(a^{C_{G_{x_0}}(d_1,\ldots,d_n)})\geq \mathsf{dim}(a^D)=1$. Applying Theorem \ref{thm2D} with $A_0=\emptyset$ and $G_{x_0}$, we get an interpretable pseudofinite field $\bar{K}:=\widetilde{\mathcal{R}}_{G_{x_0}}(A)$, where $\widetilde{\mathcal{R}}_{G_{x_0}}(A)$ is the ring of endomorphisms generated by $\{(g_1-g_2)^{-1}: g_1\neq g_2\in G_{x_0}\} \cup G_{x_0}$, such that $G_{x_0}$ embeds into $\bar{K}^\times$ and $(\bar{K})^+$ is isomorphic to $A$. Indeed, since $K^+\simeq A\simeq (\bar{K})^+$, $K^+\leq \bar{K}^+$ and $K,K^+$ are pseudofinite, we have $K=\bar{K}^+$.
\end{proof}

\section{Permutation Groups of Dimension $\geq 3$}\label{sec5}
This section deals with permutation groups in $\mathcal{S}$ of dimension greater or equal to $3$. The general strategy will be different from the previous sections. All the proofs before rely more on the $\widetilde{\mathfrak{M}_c}$-condition and properties of dimensions. From now on we will use pseudofiniteness to go directly to finite structures, and then use the well-established results of finite groups, such as CFSG (the classification of finite simple groups).

Moreover, as mentioned in the introduction, we need two extra assumptions: the $\widetilde{\mathfrak{M}_s}$-condition on $(G,X)$, and the (EX)-condition on $X$. 

While we need these two additional assumptions in the main theorem, we still make our statements as general as possible. 

The following lemma only assume pseudofiniteness and the $\widetilde{\mathfrak{M}_c}$-condition.
\begin{lemma}\label{lem-boundrank}
Let $G=\prod_{i\in I}G_i/\mathcal{U}$ be a pseudofinite $\widetilde{\mathfrak{M}_c}$- group. Then there is some $n<\omega$ and $J\in\mathcal{U}$ such that for all $i\in J$ we cannot find subgroups $D^i_0,\ldots,D^i_{n-1}$ of $G_i$ which are center-less and commute with each other.
%For any $n\in D$, there is $J\in\mathcal{U}$ such that $G_i$ has subgroups $D^i_1,\ldots,D^i_n$ which are center-less and commute with each other for all $i\in J$.
\end{lemma}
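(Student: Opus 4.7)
The plan is to argue by contradiction. Since $G$ satisfies $\widetilde{\mathfrak{M}_c}$, there is a constant $d$ witnessing it, and by \L{}o\'s' theorem the set $J := \{i \in I : G_i \text{ satisfies } \widetilde{\mathfrak{M}_c} \text{ with the same constant } d\}$ lies in $\mathcal{U}$. I claim $n$ can be chosen of order $d \lceil \log_2(d+1)\rceil$ and that this $J$ suffices; the task reduces to the purely combinatorial statement that no finite group satisfying $\widetilde{\mathfrak{M}_c}$ with constant $d$ can host $n$ center-less pairwise commuting subgroups.

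Suppose for contradiction some $i \in J$ carries center-less pairwise commuting subgroups $D_0, \ldots, D_{n-1} \leq G_i$. The first step is to show that $H := \langle D_0, \ldots, D_{n-1}\rangle$ is an internal direct product $D_0 \times \cdots \times D_{n-1}$: any element of $D_k \cap \langle D_j : j \neq k\rangle$ lies in $D_k$ and is a product of elements commuting with $D_k$, hence sits in $Z(D_k) = \{\mathsf{1}\}$.

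Next I partition $\{0, \ldots, n-1\}$ into $d+1$ blocks $B_0, \ldots, B_d$ each of size $r := \lceil \log_2(d+1)\rceil$. For each $k$ pick a nontrivial $a_k \in D_k$; center-lessness forces $a_k \notin Z(D_k)$ and hence $[D_k : C_{D_k}(a_k)] \geq 2$. Set $g_\ell := \prod_{k \in B_\ell} a_k \in H$. The direct-product structure makes the centralizer in $H$ factor component-wise, yielding
\[
[C_H(g_0, \ldots, g_{\ell-1}) : C_H(g_0, \ldots, g_\ell)] = \prod_{k \in B_\ell}[D_k : C_{D_k}(a_k)] \geq 2^r > d
\]
for each $\ell \geq 1$. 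The standard injection $C_H(\bar g)/C_H(\bar g, g_\ell) \hookrightarrow C_{G_i}(\bar g)/C_{G_i}(\bar g, g_\ell)$ transports this lower bound into $G_i$, so the sequence $g_0, \ldots, g_d$ realises $d$ consecutive centralizer indices all $>d$, directly contradicting $\widetilde{\mathfrak{M}_c}$ with constant $d$ in $G_i$.

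The main obstacle is precisely this amplification step: raw center-lessness only guarantees an index drop of $\geq 2$ per chosen element, whereas $\widetilde{\mathfrak{M}_c}$ forbids $d$ consecutive drops all of size $>d$. Merging logarithmically many commuting center-less subgroups into a single product element upgrades these small factor-$2$ drops into one drop of size $>d$, and distributing the construction over $d$ parallel blocks supplies the required long chain. All other steps (passing from the ultraproduct witness $d$ to finite $G_i$ via \L{}o\'s, the direct-product decomposition, and the intersection-with-subgroup index inequality) are routine.
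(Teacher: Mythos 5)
Your proof is correct and takes essentially the same approach as the paper: both set $n=(d+1)m$ with $2^m>d$, partition the pairwise-commuting center-less subgroups into $d+1$ blocks of size $m$, and multiply one nontrivial element from each subgroup in a block to manufacture $d$ consecutive centralizer-index drops all exceeding $d$, contradicting the $\widetilde{\mathfrak{M}_c}$-bound. The only differences are cosmetic: you spell out the internal direct-product decomposition of $H$ and the transfer of index lower bounds from $H$ to $G_i$, which the paper leaves implicit, and you derive the contradiction inside a single finite $G_i$ rather than lifting the bad chain back to $G$ via \L{}o\'s.
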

\begin{proof}
This is standard. Fix any $d\in \mathbb{N}$. Let $n=(d+1)\cdot m$ such that $2^m>d$. If the claim is not true, then for all $J\in \mathcal{U}$ there is $i\in J$ such that there are subgroups $D^i_0,\ldots, D^i_{n-1}$ in $G_i$ as claimed. Let $J_0:=\{i\in I: G_i$ has centerless subgroups $D^i_0,\ldots, D^i_{n-1}$ which commute with each other.$\}$ Then $J_0\in \mathcal{U}$, since otherwise the complement would be in the ultrafilter which contradicts our assumption.

For $i\in J_0$, choose $1\neq g_j^i\in D_j^i$ for each $j<n$, and put $h_k^i=\prod_{j<m}(g^i_{km+j})$ for $k\leq d$. Clearly, for each $i\in J_0$ and for any $1\leq k\leq d$ we have 
\begin{align*}
&~[C_{G_i}(h^i_0,\ldots,h^i_{k-1}):C_{G_i}(h^i_0,\ldots,h^i_k)]\\
&\geq [\prod_{j< m}D^i_{km+j}:C_{D^i_{km}}(g^i_{km}) C_{D^i_{km+1}}(g^i_{km+1})\cdots C_{D^i_{km+m-1}}(g^i_{km+m-1})]\\
&\geq \prod_{j<m}[D^i_{km+j}:C_{D^i_{km+j}}(g^i_{km+j})]
\geq 2^m>d.
\end{align*} 
%For any $D^i_j,D^i_k$ with $i\neq k$, as they are centerless, if we take $d^i_j\in D^i_j\setminus \{\mathsf{1}\}$ and $d^i_k\in D^i_k\setminus \{\mathsf{1}\}$, then $$[D^i_jD^i_k:C_{D^i_jD^i_k}(d^i_jd^i_k)]=[D^i_jD^i_k:C_{D^i_j}(d^i_j)D^i_k]\cdot[C_{D^i_j}(d^i_j)D^i_k:C_{D^i_j}(d^i_j)C_{D^i_k}(d^i_k)]\geq 2^2.$$ Similarly, if we take $d^i_1\in D^i_1\setminus \{\mathsf{1}\},\ldots, d^i_m\in D^i_m\setminus \{\mathsf{1}\}$, and let $g:=\prod_{1\leq j\leq m}d^i_j$, then $$[\prod_{1\leq j\leq m}D^i_j:C_{\prod_{1\leq j\leq m}D^i_j}(g)]\geq 2^m>d.$$ As $n=(d+1)\cdot m$, for any $0\leq k\leq d$, we can find $g_k\in \prod_{1\leq j\leq m}D^i_{km+j}$ such that $$[C_{G_i}(g_0,\ldots,g_{k-1}):C_{G_i}(g_0,\ldots,g_k)]\geq [\prod_{1\leq j\leq m}D^i_{km+j}:C_{\prod_{1\leq j\leq m}D^i_{km+j}}(g_k)]>d.$$ As this holds any for $i\in J$, it will also hold in $G$. 
Hence, $G$ does not satisfy the $\widetilde{\mathfrak{M}_c}$-condition, a contradiction.
\end{proof}

Suppose $G=\prod_{i\in I}G_i/\mathcal{U}$. Let $H_i$ be a non-trivial minimal normal subgroup in $G_i$ for $i\in I$. Then $H_i$ is a direct product of isomorphic simple groups. Suppose $H_i=T_i\odot T_i^{g_{i_1}}\odot\cdots\odot T_i^{g_{i_{n_i}}}$ with $g_{i_1},\ldots,g_{i_{n_i}}\in G_i$ and $T_i$ simple. If $H_i$ is not abelian, then neither is $T_i$. Let $H:=\prod_{i\in I}H_i/\mathcal{U}$ and $T=\prod_{i\in I}T_{i}/\mathcal{U}$. 

\begin{lemma}\label{lemT}
Let $(G,X)\in\mathcal{S}$. In particular, $G$ is a pseudofinite $\widetilde{\mathfrak{M}_c}$-group. Let $H$ be defined as above. If $H$ is not abelian, then $T$ is infinite and there is $m\in\mathbb{N}$ such that $H=T\odot T^{g_1}\odot\cdots \odot T^{g_m}$ for some $g_1,\ldots,g_m\in G$.

Moreover, $T$ and $H$ are definable, and $T$ is a simple pseudofinite group.
\end{lemma}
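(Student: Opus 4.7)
The plan is to combine Lemma~\ref{lem-boundrank} with the structure theory of finite simple groups (via CFSG) to extract $T$ and $H$ as definable objects of $G$, and then to use the definable primitivity of $G$ on $X$ to force $T$ to be infinite.

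First I would bound the number of simple factors. Since $H_i$ is non-abelian minimal normal in $G_i$, each factor $T_i^{g_{i_j}}$ is a non-abelian finite simple group, hence centerless, and distinct factors commute pairwise. Lemma~\ref{lem-boundrank} therefore caps $n_i+1$ by some absolute $m+1$ on a set in $\mathcal{U}$, and by shrinking to a smaller set in $\mathcal{U}$ we may assume $n_i = m$ throughout. Setting $g_j := (g_{i_j})/\mathcal{U}$ yields $H = T \odot T^{g_1} \odot \cdots \odot T^{g_m}$, which is a genuine direct product since each factor has trivial centre.

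Next I would establish uniform definability of $T$ (and hence of $H$). Fix a non-trivial $a_i \in T_i$ and set $a := (a_i)/\mathcal{U}$. As $T_i$ is simple and $H_i$ normalizes each factor, $T_i$ equals the normal closure $\langle a_i^{H_i}\rangle$. By CFSG-based diameter bounds for finite simple groups (Liebeck--Shalev), combined with the bound on the Lie-type rank of the $T_i$ forced by the ambient $\widetilde{\mathfrak{M}_c}$-hypothesis, there is $N \in \mathbb{N}$ such that on a set in $\mathcal{U}$ every element of $T_i$ is a product of at most $N$ elements of the form $(a_i^{\pm 1})^h$ with $h \in H_i$. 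The set of such bounded products is uniformly definable, so $T$ is definable in $G$, and consequently $H = T \cdot T^{g_1} \cdots T^{g_m}$ is definable.

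Simplicity of $T$ follows from the same covering property: any non-trivial $K \trianglelefteq T$ contains some $b \neq 1$, and the corresponding bounded word-length set of $T$-conjugates of $b^{\pm 1}$ already exhausts $T$, so $K = T$. For infiniteness, $H$ is a non-trivial definable normal subgroup of $G$ (normality being inherited from $H_i \trianglelefteq G_i$), so by Lemma~\ref{lem-trivialtansitive} and definable primitivity $H$ acts transitively on $X$. Finite sets have dimension zero by axiom, but $\mathsf{dim}(X)=1$, so $X$ is infinite; hence $H$ is infinite, and from $H \simeq T^{m+1}$ so is $T$. The main obstacle is the uniform word-length bound in the second step: for arbitrary finite simple groups the covering number by conjugates of a fixed non-trivial element is not absolutely bounded (e.g.\ it grows with the alternating degree or Lie rank), so one must first exploit the $\widetilde{\mathfrak{M}_c}$-condition to cap the complexity of the $T_i$ before classical diameter estimates deliver the uniform $N$ needed to transfer via the ultraproduct.
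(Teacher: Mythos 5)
Your overall strategy is sound but departs from the paper's route at the key technical step. The paper derives the covering property (that $T$ equals a bounded product $x^T\cdot(k)\cdot x^T$ of conjugates of a fixed nontrivial element) by chaining three results: Wilson's theorem to get that $T$ is a simple pseudofinite group, Ryten's result that $T$ is then supersimple of finite $SU$-rank in the pure group language, and the Indecomposability Theorem to produce the bounded generation. You shortcut this by invoking Liebeck--Shalev covering bounds for finite simple groups of bounded Lie rank directly, which is a legitimate alternative (both routes rest on CFSG, yours avoids the detour through supersimple model theory, and it gives you simplicity of $T$ as a free byproduct rather than citing Wilson). You also correctly flag, but do not carry out, the step where $\widetilde{\mathfrak{M}_c}$ caps the Lie rank; the paper does this concretely by exhibiting arbitrarily many commuting centerless subgroups (copies of $\mathrm{Alt}_5$ or $\mathrm{PSL}_2(\mathbb{F}_{p_i})$) in $T_i$ of unbounded rank and contradicting Lemma~\ref{lem-boundrank}.

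There is, however, a genuine circularity in your definability argument that needs to be repaired. You define $T$ as the set of products of at most $N$ elements of the form $(a^{\pm 1})^h$ with $h\in H$, and then obtain $H$ as $T\cdot T^{g_1}\cdots T^{g_m}$. But at that point $H$ is not yet known to be definable, so quantifying $h$ over $H$ is not a legitimate $G$-formula. If you instead let $h$ range over $G$, the conjugates $a^{G}$ scatter across all factors $T^{g_j}$ (since $G$ permutes the simple factors transitively), so the bounded-product set naturally captures $H$, not $T$. The paper handles this in the right order: it first shows $H$ is definable using $x^{G}$-conjugates (here normality of $H$ guarantees $x^G\subseteq H$ and the covering of each factor gives the sandwich), and only then defines $T$ via $x^{H}$-conjugates, which is now a legitimate formula. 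Reversing the order, as you must, the rest of your argument goes through.
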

\begin{proof}
By Lemma \ref{lem-boundrank}, there is $m\in\mathbb{N}$ and $J\in\mathcal{U}$ such that  $H_i$ is $m+1$-fold product of conjugates of $T_i$ for all $i\in J$. Hence, $H=T\odot T^{g_1}\odot\cdots \odot T^{g_m}$ for some $g_1\ldots,g_m\in G$. We claim that $T$ is infinite. Otherwise, if $T$ is finite, then $H$ is finite, hence definable. Since $H$ is non-trivial, it acts transitively on $X$. Hence, $\mathsf{dim}(X)\leq\mathsf{dim}(H)=0$, a contradiction.

For each $i\in I$, since $T_i$ is non-abelian, we may assume it is either an alternating group $\textsf{Alt}_{n_i}$ or a classical group of Lie type of rank $n_i$ over some field $\mathbb{F}_{q_i}$, denoted as $\textsf{cl}_{n_i}(q_i)$. We claim that $n_i$ is bounded. If not, then for any $n$, for all large enough $n_i$, the group $\textsf{Alt}_{n_i}$ will contain at least $n$ commuting copies of $\textsf{Alt}_5$, and $\textsf{cl}_{n_i}(q_i)$ will contain at least $n$ commuting copies of $\text{PSL}_2(\mathbb{F}_{p_i})$, where $p_i$ is the characteristic of $\mathbb{F}_{q_i}$. Both cases contradict Lemma \ref{lem-boundrank}. Thus, we may assume $\{T_i:i\in I\}$ are classical groups of Lie type of bounded Lie rank.

By \cite{wilson1995simple}, $T$ is a simple pseudofinite group. Hence, the theory of $T$ in the language of pure group is supersimple of finite $SU$-rank by \cite{ryten2007model}. As $T$ is infinite nonabelian simple, there is some $x\in T$ such that the set $x^T$ is infinite. By the Indecomposability Theorem (\ref{fact-indecomp}), there is some infinite definable group $D\leq x^T\cdots x^T$ which is normal in $T$, where $x^T\cdots x^T$ is a $k$-fold product for some $k\in\mathbb{N}$. Denote the $k$-fold product of $X$ as $X\cdot (k)\cdot X$. Since $T$ is simple, $D=T$. Therefore, $x^T\cdot (k)\cdot x^T=T$. As $H$ is normal and $x\in H$, we have \begin{align*}
H&\supseteq (x^G\cdot (k)\cdot x^G)\odot (x^G\cdot (k)\cdot x^G)^{g_1}\odot\cdots\odot(x^G\cdot (k)\cdot x^G)^{g_m}\\
&\supseteq T\odot T^{g_1}\odot\cdots\odot T^{g_m}=H.
\end{align*}
Consequently, $H$ is definable. Moreover, since $x^H\cdot (k)\cdot x^H=x^T\cdot(k)\cdot x^T=T$, we also get $T$ definable. 
\end{proof}

\begin{lemma}\label{lemGx}
Let $(G,X)\in\mathcal{S}$. Suppose $G$ satisfies the $\widetilde{\mathfrak{M}_s}$-condition. Let $D$ be a normal definable subgroup of $G$. Suppose $\mathsf{dim}(D)=n$. Then there are $x_1,\ldots,x_n\in X$ such that for all $1\leq i\leq n$ we have $$\mathsf{dim}(\mathsf{PStab}_{D}(x_1,\ldots,x_i))=n-i.$$
Moreover, there are $x_1,\ldots,x_t\in X$ such that $\mathsf{PStab}_D(x_1,\ldots,x_t)=\{\mathsf{1}\}$.
\end{lemma}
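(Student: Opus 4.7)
The plan is to prove the statement by a greedy construction together with a compactness argument based on faithfulness of the $G$-action on $X$. Write $D_i := \mathsf{PStab}_D(x_1,\ldots,x_i)$, with the convention $D_0 := D$ and $\mathsf{dim}(D_0) = n$. I will choose the $x_i$'s inductively, and then extend the tuple to kill the stabilizer.

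Fix $i < n$ and assume inductively that $\mathsf{dim}(D_i) = n-i > 0$. For any $y \in X$, the Orbit--Stabilizer identity gives
\[
\mathsf{dim}(\mathsf{Stab}_{D_i}(y)) = \mathsf{dim}(D_i) - \mathsf{dim}(y^{D_i}),
\]
and since $y^{D_i} \subseteq X$ has dimension at most $\mathsf{dim}(X) = 1$, we automatically get $\mathsf{dim}(D_{i+1}) \geq n-i-1$ whichever $x_{i+1}$ we pick. So one inequality is free.

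For the matching upper bound I need some $x_{i+1} \in X$ with $\mathsf{dim}(x_{i+1}^{D_i}) = 1$. Suppose not; then $\mathsf{dim}(y^{D_i}) = 0$, equivalently $\mathsf{Stab}_{D_i}(y)$ is wide in $D_i$, for every $y \in X$. By Lemma \ref{claim 1} any finite intersection $\mathsf{PStab}_{D_i}(y_1,\ldots,y_s) = \bigcap_{j\leq s} \mathsf{Stab}_{D_i}(y_j)$ is then also wide in $D_i$, hence has dimension $n-i > 0$. On the other hand, since $G$ acts faithfully on $X$ so does $D_i \leq G$, meaning $\bigcap_{y \in X} \mathsf{Stab}_{D_i}(y) = \{\mathsf{1}\}$; working in a sufficiently saturated elementary extension, the partial type $\{g \in \mathsf{Stab}_{D_i}(y) : y \in X\} \cup \{g \neq \mathsf{1}\}$ is inconsistent, and compactness yields $y_1,\ldots,y_s \in X$ with $\mathsf{PStab}_{D_i}(y_1,\ldots,y_s) = \{\mathsf{1}\}$. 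Since for each fixed $s$ the existence of such a tuple is a first-order statement, the same holds in $(G,X)$. But $\{\mathsf{1}\}$ has dimension $0$, contradicting the previous assertion.

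Iterating for $i = 0,1,\ldots,n-1$ produces the desired $x_1,\ldots,x_n$. For the moreover part, apply the same compactness/faithfulness argument once more, now to the definable subgroup $D_n$ (of dimension $0$ but possibly infinite) acting faithfully on $X$, to obtain $y_1,\ldots,y_k \in X$ with $\mathsf{PStab}_{D_n}(y_1,\ldots,y_k) = \{\mathsf{1}\}$; setting $x_{n+j} := y_j$ and $t := n+k$ gives $\mathsf{PStab}_D(x_1,\ldots,x_t) = \{\mathsf{1}\}$. The delicate step is the compactness argument ruling out the pathological scenario where every $D_i$-orbit in $X$ is zero-dimensional; everything else is Orbit--Stabilizer bookkeeping.
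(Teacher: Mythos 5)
There is a genuine gap in the compactness step, and it is precisely where the $\widetilde{\mathfrak{M}_s}$-condition --- which you never use --- must enter. You claim that faithfulness of the $G$-action implies the partial type $\{y^g = y : y\in X\}\cup\{g\in D_i\}\cup\{g\neq\mathsf{1}\}$ is inconsistent, and deduce by compactness that some finite tuple has trivial pointwise stabilizer. But faithfulness only tells you that in any model, no nontrivial element fixes \emph{every} point of that model's $X$-sort. If you pass to a saturated elementary extension $(G^*,X^*)\succeq (G,X)$, a nontrivial $g\in D_i^*$ may well fix every $y$ in the original $X$ while moving points of $X^*\setminus X$; faithfulness in $(G^*,X^*)$ is a statement about $X^*$, not about $X$, so it does not make your type over $X$ inconsistent. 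Put differently, from the faithfulness of a permutation action one \emph{cannot} conclude that some finite tuple has trivial pointwise stabilizer. A concrete illustration: in $\prod_{\mathcal{U}} (S_n, \{1,\ldots,n\})$, the pointwise stabilizer of any finite tuple is an ultraproduct of groups $S_{n-s}$, hence infinite, even though the action is faithful. (That class fails the $\widetilde{\mathfrak{M}_s}$-condition, as it must.) The same gap infects the ``moreover'' step, which invokes the identical compactness claim for $D_n$.

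The paper's proof does exactly what your compactness argument was attempting, but via the $\widetilde{\mathfrak{M}_s}$-condition, and it is the only place in the lemma where that hypothesis is consumed. The $\widetilde{\mathfrak{M}_s}$-condition directly produces a finite tuple $\bar x = (x_1,\ldots,x_m)$ and a bound $d$ such that $[\mathsf{PStab}_D(\bar x) : \mathsf{PStab}_D(\bar x, y)] \leq d$ for every $y\in X$; normality of $D$ makes the conjugate family of $\mathsf{PStab}_D(\bar x)$ uniformly commensurable, Schlichting gives a commensurable normal definable $D_0\trianglelefteq G$, and definable primitivity forces $D_0$ to be either transitive (impossible, since orbits of $\mathsf{PStab}_D(\bar x)$ have size $\leq d$ but $x^{D_0}$ would have dimension $1$) or trivial. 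Hence $\mathsf{PStab}_D(\bar x)$ is finite, and finitely more points kill it; the $n$ indices at which the dimension actually drops give the required chain, because by Orbit--Stabilizer each step drops the dimension by at most $1$. Your Orbit--Stabilizer bookkeeping and the use of Lemma \ref{claim 1} are fine; the missing idea is the $\widetilde{\mathfrak{M}_s}$/Schlichting/primitivity mechanism that replaces the invalid appeal to compactness.
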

\begin{proof}
To prove the first part of the statement, it suffices to show that there are $x_1,\ldots,x_n\in X$ such that $\mathsf{dim}(\mathsf{PStab}_D(x_1,\cdots,x_n))=0$.
Since $(G,X)$ satisfies $\widetilde{\mathfrak{M}_s}$-condition, so does $(D,X)$. By the $\widetilde{\mathfrak{M}_s}$-condition, there are $x_1,\ldots,x_m\in X$ and $d\in \mathbb{N}$ such that $$[\mathsf{PStab}_D(x_1,\ldots,x_m):\mathsf{PStab}_D(x_1,\ldots,x_m,x)]\leq d,$$ for any $x\in X$. As $D$ is normal in $G$, we get $\{(\mathsf{PStab}_D(x_1,\ldots,x_m))^g:g\in G\}$ is a uniformly commensurable family of definable subgroups. By Schlichting's Theorem, there is definable $D_0\trianglelefteq G$ such that $D_0$ is commensurable with $\mathsf{PStab}_D(x_1,\ldots,x_m)$. 
By Lemma \ref{lem-trivialtansitive}, either $x^{D_0}=X$ or $D_0$ is trivial. If $x^{D_0}=X$, then $$\mathsf{dim}(x^{\mathsf{PStab}_D(x_1,\ldots,x_m)})=\mathsf{dim}(x^{D_0})=1.$$ By the Orbit-Stabilizer Theorem  $$|x^{\mathsf{PStab}_D(x_1,\ldots,x_m)}|=[\mathsf{PStab}_D(x_1,\ldots,x_m):\mathsf{PStab}_D(x_1,\ldots,x_m,x)]\leq d,$$ a contradiction. Therefore, $D_0$ is trivial. As $\mathsf{PStab}_D(x_1,\ldots,x_m)$ is commensurable with $D_0$, we deduce $\mathsf{PStab}_D(x_1,\ldots,x_m)$ is finite. So we only need finitely many more points, say $x_{m+1},\ldots,x_t\in X$, to distinguish $\mathsf{1}$ from other elements in $\mathsf{PStab}_D(x_1,\ldots,x_m)$. Therefore, $\mathsf{PStab}_D(x_1,\ldots,x_t)=\{\mathsf{1}\}$.

To finish the proof we show that there is a subsequence $x_{i_1},\ldots,x_{i_n}$ of $x_1,\ldots,x_m$ with $\mathsf{dim}(\mathsf{PStab}_D(x_{i_1},\ldots,x_{i_n}))=0$. Consider the dimensions of the following sequence $$\mathsf{PStab}_D(x_1),\mathsf{PStab}_D(x_1,x_2),\ldots,\mathsf{PStab}_D(x_1,\ldots,x_m).$$ By the Orbit-Stabilizer Theorem, the dimension can drop at most 1 in each step. Hence, $m\geq n$. Take $n$ elements, say $x_{i_1},\ldots,x_{i_n}$ with $i_1<i_2<\cdots<i_n$, such that each of the corresponding dimension drops. By our choice, $$1\geq\mathsf{dim}((x_{i_j})^{\mathsf{PStab}_D(x_{i_1},\ldots,x_{i_{j-1}})})\geq\mathsf{dim}((x_{i_j})^{\mathsf{PStab}_D(x_1,x_2,\ldots,x_{i_j-1})})=1,$$ for each $1\leq j\leq n$. Therefore, $\mathsf{dim}(\mathsf{PStab}_D(x_{i_1},\ldots,x_{i_n}))=0$.\qedhere

%For the second claim of the lemma, by the $\widetilde{\mathfrak{M}_s}$-condition, there are $k\in\mathbb{N}$ and $x_1,\ldots,x_m\in X$ such that $[\mathsf{PStab}_G(x_1,\ldots,x_m):\mathsf{PStab}_G(x_1,\ldots,x_m,x)]\leq k$ for all $x\in X$. By Schlichting's Theorem, there is a definable normal subgroup $H\trianglelefteq G$ such that $H$ is commensurable with $\mathsf{PStab}_G(x_1,\ldots,x_m)$. By the first part of this lemma, we know that $\mathsf{dim}(\mathsf{PStab}_G(x_1,\ldots,x_m))=0$. As $G$ acts definably primitively on $X$, either $H$ is trivial or $\mathsf{dim}(H)=1$.  Since $H$ is commensurable with $\mathsf{PStab}_G(x_1,\ldots,x_m)$, the latter cannot happen. Therefore, $\mathsf{PStab}_G(x_1,\ldots,x_m)$ is finite. 
\end{proof}

\begin{lemma}\label{lem-xGy}
Suppose $(G,X)\in\mathcal{S}$ satisfies the $\widetilde{\mathfrak{M}_s}$-condition. Let $D$ be a non-trivial normal definable subgroup of $G$. For any $x\in X$, define $L_x:=\{y\in X:\mathsf{dim}(x^{D_y})=0\}$. Then $L_x$ is uniformly definable with respect to $x$.
\end{lemma}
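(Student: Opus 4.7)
My plan is to reduce the definability of $L_x$ to an action analogue of Lemma \ref{claim-0dimension}, exploiting the transitivity of $D$ on $X$ to swap the roles of $x$ and $y$.

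First, I would use Orbit--Stabilizer and additivity of $\mathsf{dim}$:
\[
\mathsf{dim}(x^{D_y}) = \mathsf{dim}(D_y) - \mathsf{dim}(D_x \cap D_y), \quad \mathsf{dim}(y^{D_x}) = \mathsf{dim}(D_x) - \mathsf{dim}(D_x \cap D_y).
\]
Since $D \trianglelefteq G$ is non-trivial, Lemma \ref{lem-trivialtansitive} ensures $D$ acts transitively on $X$, so $D_x$ and $D_y$ are $D$-conjugate and $\mathsf{dim}(D_x) = \mathsf{dim}(D_y)$. Hence $\mathsf{dim}(x^{D_y}) = 0$ iff $\mathsf{dim}(y^{D_x}) = 0$, giving the useful reformulation
\[
L_x = \{y \in X : \mathsf{dim}(y^{D_x}) = 0\}.
\]

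Next, I would adapt the proof of Lemma \ref{claim-0dimension} to the action of $D_x$ on $X$, substituting the $\widetilde{\mathfrak{M}_s}$-condition on $(G,X)$ for the $\widetilde{\mathfrak{M}_c}$-condition. Let $d \in \mathbb{N}$ be the $\widetilde{\mathfrak{M}_s}$-constant. Iteratively picking $z_i \in L_x$ whose addition drops the point-wise stabilizer chain $D_x \supseteq D_x \cap D_{z_1} \supseteq \cdots$ by more than $d$ terminates after at most $d$ steps by $\widetilde{\mathfrak{M}_s}$, yielding a tuple $(z_1, \ldots, z_n) \in L_x^n$ ($n \leq d$) such that $T := \mathsf{PStab}_{D_x}(z_1, \ldots, z_n)$ is wide in $D_x$ (by Lemma \ref{claim 1}, since each $z_i \in L_x$ makes $\mathsf{Stab}_{D_x}(z_i)$ wide) and $[T : T \cap D_y] \leq d$ for every $y \in L_x$. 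The converse is immediate from wideness: $[T : T \cap D_y] \leq d$ makes $T \cap D_y$ wide in $T$ hence wide in $D_x$, forcing $\mathsf{dim}(y^{D_x}) = 0$. Thus $L_x = \{y : [T : T \cap D_y] \leq d\}$ for some $\bar z \in X^{\leq d}$ depending on $x$.

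To obtain uniformity in $x$, I would express $L_x$ via an inductive hierarchy
\[
L_x^{(0)} := \{y : [D_x : D_x \cap D_y] \leq d\}, \quad L_x^{(k)} := L_x^{(k-1)} \cup \bigl\{y : \exists z_1, \ldots, z_k \in L_x^{(k-1)},\ [T(\bar z) : T(\bar z) \cap D_y] \leq d\bigr\},
\]
where $T(\bar z) := D_x \cap D_{z_1} \cap \cdots \cap D_{z_k}$. Each $L_x^{(k)}$ is definable uniformly in $x$ by induction, and the inclusion $L_x^{(k)} \subseteq L_x$ holds because witnesses drawn from $L_x^{(k-1)} \subseteq L_x$ make $T(\bar z)$ wide in $D_x$ via Lemma \ref{claim 1}, propagating the index bound to wideness of $D_x \cap D_y$. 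The main obstacle I anticipate is the reverse inclusion $L_x \subseteq L_x^{(K)}$ for some uniform $K$: this requires a bootstrapping argument that the iterative witnesses for any $y \in L_x$ can themselves be taken at bounded hierarchy depth, preventing infinite regress in the nested witnesses. My strategy is to apply the $\widetilde{\mathfrak{M}_s}$-condition to a joint stabilizer chain combining the witness sequence for $y$ with the sub-witnesses of each $z_i$; this should yield a uniform depth bound $K$ expressible in terms of $d$ alone, so that $L_x = L_x^{(K)}$ is uniformly definable in $x$.
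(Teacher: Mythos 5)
The reformulation $L_x = \{y : \mathsf{dim}(y^{D_x})=0\}$ and the observation that, for each fixed $x$, the $\widetilde{\mathfrak{M}_s}$-condition produces a tuple $\bar z$ with $L_x = \{y : [T(\bar z):T(\bar z)\cap D_y]\leq d\}$ are both correct. The gap you anticipate at the uniformity step is real, however, and the bootstrapping hierarchy you sketch cannot be repaired: it provably fails to capture $L_x$.

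The issue is that wideness does not imply finite index. Every $y \in L_x^{(0)}$ has $[D_x : D_x\cap D_y]\leq d$, so the witness $z_1\in L_x^{(0)}$ at level $1$ yields $[D_x : T(\bar z)]\leq d$ and hence $[D_x : D_x\cap D_y]\leq d^2$ for every $y\in L_x^{(1)}$; inductively, every $y\in L_x^{(K)}$ satisfies a finite index bound $[D_x : D_x\cap D_y]\leq N(K,d)$. But membership in $L_x$ only requires $\mathsf{dim}(D_x\cap D_y)=\mathsf{dim}(D_x)$, which is perfectly compatible with $[D_x:D_x\cap D_y]$ being infinite (the ambient theory can have infinite dimension-$0$ sets). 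So $L_x\not\subseteq L_x^{(K)}$ for any $K$, no matter how the joint stabilizer chains are organised. Relatedly, the witnesses $z_i$ your $\widetilde{\mathfrak{M}_s}$-process actually produces are chosen precisely so that $[T_{i-1}:T_{i-1}\cap D_{z_i}]>d$, which means $z_1\notin L_x^{(0)}$; the natural witnesses never enter the low levels of the hierarchy, so there is nothing to bootstrap from.

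The missing idea is to fix a single tuple once and for all and use transitivity of $D$ to transport it. Pick one base point $x_1$ and apply the $\widetilde{\mathfrak{M}_s}$-condition to $D$ to obtain $x_1,\ldots,x_k$ with $\mathsf{dim}(\mathsf{PStab}_D(x_1,\ldots,x_k))=\mathsf{dim}(D_{x_1})$ and such that for all $z\in X$, $\mathsf{dim}(z^{D_{x_1}})=0$ iff $[\mathsf{PStab}_D(x_1,\ldots,x_k):\mathsf{PStab}_D(x_1,\ldots,x_k,z)]\leq d$. Then for arbitrary $y$, write $y=x_1^g$ with $g\in D$ (possible by transitivity); one computes $x^{D_y}=((x^{g^{-1}})^{D_{x_1}})^g$, so $y\in L_x$ iff $\mathsf{dim}((x^{g^{-1}})^{D_{x_1}})=0$, which by the fixed characterisation is a definable condition on $x^{g^{-1}}$. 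The resulting formula ``$\exists g\in D\,(x_1^g=y \wedge [\mathsf{PStab}_D(\bar x):\mathsf{PStab}_D(\bar x,x^{g^{-1}})]\leq d)$'' is uniform in $x$ because $\bar x$ and $d$ are fixed parameters. (Equivalently, using your reformulation, conjugate $x$ rather than $y$: write $x=x_1^h$ and check the condition on $y^{h^{-1}}$.) Either way the quantifier ranges over group elements, not over auxiliary points of $X$ whose membership in $L_x$ would need to be certified, and this is what avoids the circularity.
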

\begin{proof}
First note that since $D$ is a definable subgroup of $G$, we have $(D,X)$ also satisfies $\widetilde{\mathfrak{M}_s}$-condition. Assume $\mathsf{dim}(D)=n$. Note that since $D$ is non-trivial, definable and normal, it acts transitively on $X$. Thus, $\mathsf{dim}(\mathsf{Stab}_D(x))=n-1$ for any $x\in X$. By the $\widetilde{\mathfrak{M}_s}$-condition, there are $x_1,\ldots,x_k\in X$ and $d\in\mathbb{N}$ such that $\mathsf{dim}(\mathsf{PStab}_D(x_1,\ldots,x_k))=n-1$ and for any $y\in X$, we have either $\mathsf{dim}(\mathsf{PStab}_D(x_1,\ldots,x_k,y))=n-2$ or $$[\mathsf{PStab}_D(x_1,\ldots,x_k):\mathsf{PStab}_D(x_1,\ldots,x_k,y)]\leq d.$$ As $\mathsf{dim}(D_{x_1})=\mathsf{dim}(\mathsf{PStab}_D(x_1,\ldots,x_k))=n-1$, we get $\mathsf{dim}(z^{D_{x_1}})=0$ if and only if $[\mathsf{PStab}_D(x_1,\ldots,x_k):\mathsf{PStab}_D(x_1,\ldots,x_k,z)]\leq d$ for any $z\in X$.

For any $y\in X$, let $g\in D$ be such that $(x_1)^{g}=y$. Then $y\in L_x$ if and only if $\mathsf{dim}(x^{D_{(x_1)^g}})=0$ if and only if $\mathsf{dim}((x^{g^{-1}})^{D_{x_1}})=0$ if and only if there is $g\in D$ such that $(x_1)^g=y$ and $$[\mathsf{PStab}_D(x_1,\ldots,x_k):\mathsf{PStab}_D(x_1,\ldots,x_k,x^{g^{-1}})]\leq d.\qedhere$$
\end{proof}

%In the following, we add the (EX)-condition.

\begin{theorem}\label{thm-normalabelian}
Suppose $(G,X)\in\mathcal{S}$ with $\mathsf{dim}(G)\geq 3$ satisfies the $\widetilde{\mathfrak{M}_s}$-condition and $X$ satisfies the (EX)-condition. Then $G$ does not contain any nontrivial abelian normal subgroup.
\end{theorem}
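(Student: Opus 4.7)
Suppose, for contradiction, that $A\trianglelefteq G$ is a non-trivial abelian normal subgroup. By Lemma~\ref{claim 5.1}, $\dim(A)=1$, $G=A\rtimes G_x$ with $A$ acting regularly on $X$ and $G_x$ acting faithfully on $A$ by conjugation; in particular $\dim(G_x)=\dim(G)-1\ge 2$. Put $A^\circ:=\{a\in A:\dim(a^{G_x})=0\}$; by Lemma~\ref{claim-0dimension} this is a definable subgroup of $A$, it is $G_x$-invariant by construction and $A$-invariant since $A$ is abelian, hence normal in $G$. By Lemma~\ref{lem-trivialtansitive} and regularity of $A$, either $A^\circ=\{0\}$ or $A^\circ=A$.

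If $A^\circ=A$, then every $a\in A$ satisfies $\dim(a^{G_x})=0$, so Lemma~\ref{claim-0dimension} produces a wide definable $T\le G_x$ and $d\in\mathbb{N}$ with $[T:C_T(a)]\le d$ for all $a\in A$; in particular every $T$-orbit on $A$ has size at most $d$. Since $T\le G_x$ acts faithfully on $A$, $\bigcap_{a\in A}C_T(a)=\{1\}$, and the $\widetilde{\mathfrak{M}_c}$-condition supplies $b_1,\dots,b_N\in A$ for which $C_T(b_1,\dots,b_N)$ is commensurable with this trivial intersection, hence finite. Each step in the chain $T\supseteq C_T(b_1)\supseteq\dots\supseteq C_T(b_1,\dots,b_N)$ has index at most $d$, so $|T|\le d^N\,|C_T(b_1,\dots,b_N)|<\infty$, contradicting $\dim(T)=\dim(G_x)\ge 2$.

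If $A^\circ=\{0\}$, every non-zero $a\in A$ has $\dim(a^{G_x})=1$, and the (EX)-condition applied to the definable $G_x$-orbit equivalence relation on $X\simeq A$ leaves only finitely many $G_x$-orbits on $A\setminus\{0\}$. The strategy is to produce an interpretable pseudofinite field $F$ of dimension $1$ with $F^+\simeq A$ into which $G_x$ almost embeds, forcing $\dim(G_x)\le 1$. For this, start from a broad definable finite-by-abelian $C\le G_x$ given by Fact~\ref{fact-frank-finitebyabelian}(1), apply the $\widetilde{\mathfrak{M}_c}$-condition inherited by $G_x$ and Schlichting's theorem (as in Lemma~\ref{claim 2}) to the $G_x$-conjugates of $C$ to obtain a normal subgroup $N\trianglelefteq G_x$ commensurable with a suitable finite intersection of these conjugates (finite-by-abelian by Lemma~\ref{claim 5}), and set $C^\star:=\widetilde Z(N)$; this is definable (Remark~\ref{remark-Z(G)}), $G_x$-normal (characteristic in $N$), broad, and finite-by-abelian, along the lines of the arguments in the proofs of Lemmas~\ref{lem1} and~\ref{lem2}. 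Then $A_0:=\{a\in A:\dim(a^{C^\star})=0\}$ is $G_x$-invariant and definable (Lemma~\ref{claim-0dimension}), hence normal in $G$, and so equals $\{0\}$ or $A$; the case $A_0=A$ is ruled out by applying the first-case argument to $C^\star$ in place of $G_x$ (which would force $C^\star$ to be finite, contradicting broadness). Therefore $A_0=\{0\}$, the remark following Theorem~\ref{thm2D} supplies condition~$(\clubsuit)$ for $C^\star$, and Lemma~\ref{lem field automorph} interprets a pseudofinite field $F$ of dimension~$1$ with $F^+\simeq A$ and $\mathsf{PStab}_{G_x}(F)\hookrightarrow F^\times$ of dimension~$1$ (the subgroup $M_0$ of Lemma~\ref{lem field automorph} being trivial, thanks to faithfulness of the $G_x$-action on $A$). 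The quotient $G_x/\mathsf{PStab}_{G_x}(F)$ embeds into the group of field automorphisms of $F$, which is $0$-dimensional for pseudofinite fields of dimension~$1$ (in the same spirit as Lemma~\ref{lem-Btrivial}). Adding dimensions yields $\dim(G_x)=1$, contradicting $\dim(G_x)\ge 2$.

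The main obstacle is the construction of the $G_x$-normal broad finite-by-abelian subgroup $C^\star$ in the second case: Schlichting's theorem only gives normality up to commensurability with a finite intersection of $G_x$-conjugates of $C$, and verifying that the almost centre of the resulting normal subgroup is simultaneously broad and finite-by-abelian requires a careful combination of the $\widetilde{\mathfrak{M}_c}$-condition with the structural results on almost centres in $\widetilde{\mathfrak{M}_c}$-groups from \cite{Hempel16}.
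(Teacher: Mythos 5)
Your high-level plan (split on whether the ``fixer'' subgroup of $A$ is all of $A$ or trivial, and in the latter case build an interpretable field whose size forces $\dim(G_x)\le 1$) is reasonable, and the case split is exhaustive, but both branches contain genuine gaps, and the overall route differs from the paper's in a way that removes the ingredient the paper actually needs.

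\textbf{Case $A^\circ=A$.} The crucial step — that the $\widetilde{\mathfrak{M}_c}$-condition produces $b_1,\dots,b_N\in A$ with $C_T(b_1,\dots,b_N)$ commensurable with $\bigcap_{a\in A}C_T(a)=\{1\}$, hence finite — is false. What the $\widetilde{\mathfrak{M}_c}$-condition (as used in Lemma~\ref{claim 2}) gives is a finite tuple $\bar b$ such that every \emph{single} further drop $[C_T(\bar b):C_T(\bar b,b)]$ is $\le d$. It does \emph{not} bound the index of $C_T(\bar b)$ over the full (infinite) intersection: the chain may keep dropping by $\le d$ indefinitely, and $C_T(\bar b)$ can perfectly well remain infinite even though $\bigcap_{a\in A}C_T(a)$ is trivial. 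So you have not derived a contradiction. Fortunately this branch admits a much shorter correct argument, which you should substitute: if $A^\circ=A$ then for every $0\ne a\in A$ one has $\dim(C_{G_x}(a))=\dim(G_x)$, and since $A\rtimes C_{G_x}(a)\le C_G(a)$, the centralizer $C_G(a)$ is wide in $G$; applying Lemma~\ref{lem3.0} with $K=G$ gives the contradiction immediately, no $\widetilde{\mathfrak{M}_c}$-stabilization needed.

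\textbf{Case $A^\circ=\{0\}$.} You flag the construction of a $G_x$-normal broad definable finite-by-abelian $C^\star$ as ``the main obstacle,'' and that is exactly right — it is not a technicality but the whole point, and you cannot get it by imitating Lemma~\ref{claim 2}, because $C$ is a finite-by-abelian subgroup, not a centralizer of a tuple, so its conjugates are not a priori uniformly commensurable and Schlichting does not apply. The paper avoids this obstacle entirely by using the $\widetilde{\mathfrak{M}_s}$-condition (via Lemma~\ref{lemGx}) to descend to a two-dimensional point-wise stabilizer $G_{\bar x}$, to which Fact~\ref{fact-frank-soluble}(1) applies directly and yields a broad finite-by-abelian subgroup $D$ with $\dim(N_{G_{\bar x}}(D))=2$. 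Your proposal never invokes $\widetilde{\mathfrak{M}_s}$, which is telling: dropping the one hypothesis the paper actually uses here is a sign the remaining ingredients probably do not suffice. Even if $C^\star$ existed, the ``ruling out $A_0^{C^\star}=A$ by the first-case argument'' inherits the error above, and your final step — ``the group of field automorphisms of $F$ is $0$-dimensional, in the spirit of Lemma~\ref{lem-Btrivial}'' — is not a consequence of that lemma, which only says a \emph{finite} interpretable automorphism group is trivial. The paper's treatment of this step is an entire subsidiary argument: it introduces the definable subfield $F_0^T=\{k\in F:\dim(k^T)=0\}$, rules out $F_0^T=F$ using the $\widetilde{\mathfrak{M}_c}$-condition and Fact~\ref{fact-centralizer}, and rules out $\dim(F_0^T)=0$ by a counting argument ($|T_i|$ is logarithmically small compared to $|F_i|$) combined with the (EX)-condition. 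None of this is recoverable from Lemma~\ref{lem-Btrivial} alone.

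One smaller point: the theorem's hypothesis does not say the normal abelian subgroup is definable, so you also need the observation (Proposition~4.17 of \cite{Hempel16}, as in the paper's first Claim) that a nontrivial normal abelian subgroup forces a definable one before Lemma~\ref{claim 5.1} applies.
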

\begin{proof}
The theorem follows from the claims below. 
\begin{claim}
If $G$ has a nontrivial normal abelian subgroup $H$, then $G$ has a definable nontrivial normal abelian subgroup $A$.
\end{claim}
\begin{proof}
If $G$ has a non-trivial normal abelian subgroup, then $G$ has a definable finite-by-abelian subgroup $A$, which is normal in $G$ and contains $H$, by \cite[Proposition 4.17]{Hempel16}. Since $A'$ is definable and of dimension 0, by definable primitivity, $A'$ is trivial, hence $A$ is abelian. Since $A$ contains $H$, we get $A$ is nontrivial. 
\end{proof}

Suppose the conclusion of Theorem \ref{thm-normalabelian} fails, then $G$ has a nontrivial definable normal abelian subgroup $A$. By Lemma \ref{claim 5.1}, $G=A\rtimes G_x$ where $G_x:=\mathsf{Stab}_G(x)$ for some $x\in X$. We identify $A$ with $X$. Then $G_x$ acts on $A$ by conjugation, while $A$ acts on itself by addition. Our aim is to derive a contradiction.

\begin{claim}\label{lem primitivity}
Suppose $G\in\mathcal{S}$ and $\mathsf{dim}(G)\geq 2$. Assume $G=A\rtimes G_x$. Let $C\trianglelefteq G_x$ with $C$ definable and $\mathsf{dim}(C)\geq 1$. Then $A\rtimes C$ also acts definably primitively on $X$.  
\end{claim}
\begin{proof}
We may assume that $A\rtimes G_x=\prod_{i\in I}A_i\rtimes (G_x)_i/\mathcal{U}$, the formula defining $C$ also defines $C_i\trianglelefteq (G_x)_i$ for each $i\in I$. Let $W_i\leq A_i$ be a nontrivial $C_i$-irreducible subgroup, that is a minimal nontrivial $C_i$-invariant subgroup. Consider $W:=\prod_{i\in I}W_i/\mathcal{U}$. Then $W$ is nontrivial and $C$-invariant. If there is $V:=\prod_{i\in I}V_i/\mathcal{U}$ with each $V_i\neq W_i$ non-trivial and $C_i$-irreducible, then $W\cap V=\emptyset$. Take $a\in W\setminus\{\mathsf{0}\}$ and $b\in V\setminus\{\mathsf{0}\}$. Note that $A\rtimes C\trianglelefteq G$ and $\mathsf{dim}(A\rtimes C)\geq 2$. By Lemma \ref{lem3.0}, we have $C_{A\rtimes C}(a)$ and $C_{A\rtimes C}(b)$ are not wide in $A\rtimes C$. Therefore, $\mathsf{dim}(a^C)=\mathsf{dim}(b^C)=1$. Moreover, we have $(a^C-a^C)\cap(b^C-b^C)\subseteq W\cap V=\emptyset$. Hence, $\mathsf{dim}(a^C+b^C)=\mathsf{dim}(a^C)+\mathsf{dim}(b^C)=2$, contradiction. Hence, we may assume that there is only one nontrivial $C_i$-irreducible subgroup in any $A_i$. 

Let $H$ be any non-trivial definable $C$-invariant subgroup of $A$. Then each $H_i$ is non-trivial and $C$-invariant. Thus, $W_i\subseteq H_i$ and we get $W\subseteq H$. Since $C$ is normal in $G_x$, $H^g$ is also $C$-invariant for any $g\in G_x$. By the same argument, $W\subseteq H^g$. Therefore, $W\subseteq \bigcap_{g\in _{G_x}}H^g$. The group $M:=\bigcap_{g\in _{G_x}}H^g\leq A$ is non-trivial, definable and $G_x$ invariant. As $M\leq A$ is $G_x$ invariant and $G=A\rtimes G_x$, we have $M$ is normal in $G$. Since $M$ is nontrivial, it must act transitively on $X$ by Lemma \ref{lem-trivialtansitive}. As $A$ acts on $X$ regularly by Lemma \ref{claim 5.1}, we deduce $M=H=A$. Therefore, $A$ is the minimal non-trivial definable $C$-invariant subgroup of $A$.

Clearly, $\mathsf{Stab}_{A\rtimes C}(x)= C$. Suppose there is a definable group $C\leq D\leq A\rtimes C$, then $D\cap A\leq A$. Moreover, as $(D\cap A)^C\leq D^C\cap A^C=D\cap A$, we have $(D\cap A)^C=D\cap A$. As $A$ is the minimal non-trivial definable $C$-invariant subgroup of $A$, we conclude either $D\cap A=A$ or $D\cap A=\{\mathsf{0}\}$. Therefore, either $D=C$ or $D=A\rtimes C$.
\end{proof}

By Lemma \ref{lemGx}, there is $\bar{x}=(x_1,\ldots,x_{n-2})$ such that $\mathsf{dim}(\mathsf{PStab}_G(\bar{x}))=2$. We may assume $\mathsf{PStab}_G(\bar{x})\subseteq G_x$ and we write $\mathsf{PStab}_G(\bar{x})$ as $G_{\bar{x}}$. By Fact \ref{fact-frank-soluble}(1), $G_{\bar{x}}$ has a broad definable finite-by-abelian subgroup $D$ such that $N_{G_{\bar{x}}}(D)$ has dimension 2.

Consider the group $A^D_0:=\{a\in A:\mathsf{dim}(a^D)=0\}$. The dimension of $A^D_0$ is either 0 or 1. We will show that neither of them holds.

\begin{claim}\label{lemfixer}
The dimension of $A^D_0$ is not 1.
\end{claim}
\begin{proof}
Suppose $\mathsf{dim}(A^D_0)=1$. By Lemma \ref{claim-0dimension}, there are $d\in\mathbb{N}$ and a definable group $T\leq D$ such that $A^D_0=\{a\in A:[T:C_T(a)]\leq d\}$ and $\mathsf{dim}(T)=\mathsf{dim}(D)$. Therefore $A^D_0\leq \widetilde{C}_G(T)$. Since $A$ is in definable bijection with $X$, by the (EX)-condition, $A^D_0$ has finite index in $A$. Hence, $A\lesssim \widetilde{C}_G(T)$. By Fact \ref{fact-centralizer}, $T\lesssim \widetilde{C}_G(A)$.

Let $M:=\widetilde{C}_G(A)\cap G_x$. Then $\mathsf{dim}(M)\geq\mathsf{dim}(T)\geq 1$. Note that $\widetilde{C}_G(A)$ is normal in $G$, hence, $M$ is normal in $G_x$. By Lemma \ref{lem primitivity}, $A\rtimes M=\widetilde{C}_G(A)$ also acts definably primitively on $X$.

As $\widetilde{C}_G(A)\lesssim \widetilde{C}_G(A)$, we have $A\lesssim \widetilde{C}_G(\widetilde{C}_G(A))$ by Fact \ref{fact-centralizer}. Thus, there is $\mathsf{0}\neq a\in A$ such that $[\widetilde{C}_G(A):C_{\widetilde{C}_G(A)}(a)]<\infty$, which means $C_{\widetilde{C}_G(A)}(a)$ is wide in $\widetilde{C}_G(A)$, contradicting Lemma \ref{lem3.0}.
\end{proof}

\begin{claim}The dimension of $A^D_0$ is not 0.
\end{claim}
\begin{proof}
Let $M:=N_{G_{\bar{x}}}(D)$. As the normalizer of $D$ is wide in $G_{\bar{x}}$, we have $\mathsf{dim}(M)=2$. Suppose $\mathsf{dim}(A^D_0)=0$. We can apply Theorem \ref{thm2D} and Lemma \ref{lem field automorph} to get an interpretable pseudofinite field $F$ such that $A/A^D_0\simeq F^+$ and $M$ extends to a group of automorphisms of $F$. Consider the point-wise stabilizer $\mathsf{PStab}_{M}(F)$. Let $$M_0:=\{m\in \mathsf{PStab}_{M}(F):\forall a\in A,~a^m\in a+A^D_0\}.$$ By Lemma \ref{lem field automorph}, $\mathsf{dim}(\mathsf{PStab}_{M}(F)/M_0)=1$. By the second part of Lemma \ref{lemGx}, the value of $m\in M_0$ is determined by its value on some $a_1,\ldots,a_t\in A$. Hence, $$\mathsf{dim}(M_0)\leq t\mathsf{dim}(A^D_0)=0.$$ Thus, $\mathsf{dim}(\mathsf{PStab}_{M}(F))=1$.

Therefore, $T:=M/\mathsf{PStab}_{M}(F)$ is a group of automorphisms of $F$ such that the action is faithful and $\mathsf{dim}(T)=\mathsf{dim}(M)-\mathsf{dim}(\mathsf{PStab}_{M}(F))=2-1=1$.

Consider $F_0^T:=\{k\in F:\mathsf{dim}(k^T)=0\}$. By the fact that $T$ is a group of automorphisms of $F$, we can check easily that $F_0^T$ is a subfield of $F$. Note that $F_0^T$ is definable (apply Lemma \ref{claim-0dimension} to the group $(F^+\rtimes T)$). We claim that either $F_0^T=F$ or $\mathsf{dim}(F_0^T)=0$. Indeed, if $\mathsf{dim}(F_0^T)=1$, then $$1=\mathsf{dim}(F)=[F:F_0^T]\cdot\mathsf{dim}(F_0^T)=[F:F_0^T],$$ and we get $F=F_0^T$.

If $F_0^T=F$, then by the $\widetilde{\mathfrak{M}_c}$-condition of the interpretable group $F^+\rtimes T$, there are $k_0,\cdots,k_t\in F$ and $n\in\mathbb{N}$ such that if we define $H:=C_T(k_0,\cdots,k_t)$, then for all $k\in F$ we have $[H:C_H(k)]\leq n$, that is $|k^H|\leq n$. Consider the group $F^+\rtimes H$. From the above argument we know that $F^+\lesssim \widetilde{C}_{F^+\rtimes H}(H)$. By Fact \ref{fact-centralizer}, we have $H\lesssim \widetilde{C}_{F^+\rtimes H}(F^+)$. Therefore, there is $h\neq id$ such that $[F^+:C_{F^+}(h)]<\infty$. Since $C_{F^+}(h)$ is a definable subfield of $F$ and $\mathsf{dim}(F)=1$, we have $C_{F^+}(h)=F^+$, contradicting $h\neq id$.

Thus $F_0^T$ is of dimension $0$. Let $Y:=F\setminus F_0^T$. Clearly, there is $J\in \mathcal{U}$ such that $|Y_i|\geq |F_i|/2$ for all $i\in J$. If $F_i=\mathbb{F}_{p_i^{n_i}}$, then $|T_i|\leq n_i$. Therefore, there are infinitely many $T$-orbits on $Y$ and each of them has dimension 1. Note that $X$ is in definable bijection with $F^+$, contradicting the (EX)-condition.
\end{proof}

This finishes the proof of Theorem \ref{thm-normalabelian}.
\end{proof}

With all the assumptions above we conclude that $H=T\times T^{g_1}\times\cdots\times T^{g_m}$, with $T$ definable and simple non-abelian. The following lemmas show that $T$ is normal in $G$, hence $H=T$.

The following four lemmas all assume that $(G,X)\in\mathcal{S}$ satisfies the $\widetilde{\mathfrak{M}_s}$-condition and the (EX)-condition. 

\begin{lemma}\label{lem-finiteorbit}
Let $D$ be a non-trivial definable normal subgroup of $G$. Suppose $\mathsf{dim}(D)\geq 2$. Then for any $x\in X$, the group $D_x:=\mathsf{Stab}_D(x)$ has only finitely many orbits on $X$.
\end{lemma}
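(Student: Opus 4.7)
The plan is to bound the $D_x$-orbits on $X$ by splitting them by dimension: the (EX)-condition handles dimension $1$ directly, and the $\widetilde{\mathfrak{M}_s}$-condition combined with definable primitivity handles dimension $0$. Since $D\trianglelefteq G$ is non-trivial, $D$ acts transitively on $X$ (Lemma~\ref{lem-trivialtansitive}), so $\mathsf{dim}(D_x)=\mathsf{dim}(D)-1\geq 1$ and every $D_x$-orbit has dimension $0$ or $1$. The relation ``same $D_x$-orbit'' is a definable equivalence on $X$, and by the (EX)-condition there are only finitely many equivalence classes of dimension $1$, i.e., finitely many dim-$1$ $D_x$-orbits.

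For the dim-$0$ orbits, consider $L_x:=\{y\in X:\mathsf{dim}(y^{D_x})=0\}$, definable by Lemma~\ref{lem-xGy}. The $\widetilde{\mathfrak{M}_s}$-condition (as used in the proof of Lemma~\ref{lem-xGy}) produces $x_1=x,x_2,\ldots,x_k\in X$ and $d\in\mathbb{N}$ such that $A:=\mathsf{PStab}_D(x_1,\ldots,x_k)\leq D_x$ satisfies $\mathsf{dim}(A)=\mathsf{dim}(D_x)$ and $|y^A|\leq d$ for every $y\in L_x$. Since $A\leq D_x$, the number of $D_x$-orbits in $L_x$ is at most the number of $A$-orbits in $L_x$, so it suffices to show that only finitely many $A$-orbits lie in $L_x$.

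Next, definable primitivity makes $y\mapsto D_y$ injective on $X$: the equivalence ``$D_y=D_z$'' is $G$-invariant and definable, hence by primitivity either equality or universal; the universal case would force the kernel $\bigcap_{y\in X}D_y$ of the $D$-action on $X$ to have dimension at least $1$, contradicting Lemma~\ref{lem-trivialtansitive} applied to that normal subgroup of $G$. Injectivity yields in particular $N_D(D_x)=D_x$ (by comparing $|X|=[D:D_x]$ with the number of distinct conjugates of $D_x$), and for each $y\in L_x$ the wide subgroup $B_y:=A\cap D_y$ has index at most $d$ in $A$. The family $\{B_y:y\in L_x\}$ is uniformly commensurable in $A$, so Schlichting's theorem provides a definable $N\leq A$ commensurable with every $B_y$; only boundedly many values of $B_y$ then appear, and each fibre $\{y\in L_x:B_y=B\}$ is contained in the fixed-point set $\mathrm{Fix}_X(B)=\{y:B\leq D_y\}$.

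The main obstacle is to show that each such $\mathrm{Fix}_X(B)$ is finite. By the injectivity $y\mapsto D_y$, $\mathrm{Fix}_X(B)$ is in bijection with the set of conjugates of $D_x$ in $D$ containing $B$, and combining Lemma~\ref{lemGx} (a finite tuple whose pointwise $D$-stabilizer is trivial), the self-normalization $N_D(D_x)=D_x$, and the structural choice of $x_1,\ldots,x_k$ should pin this cardinality down to a bounded value. Once this finiteness is established, $L_x$ consists of finitely many $D_x$-orbits, which together with the dim-$1$ bound completes the proof.
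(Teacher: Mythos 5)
Your proposal takes a genuinely different route: you split the $D_x$-orbits by dimension and try to bound the dim-$0$ and dim-$1$ families separately. The dim-$1$ part is fine (applying the (EX)-condition directly to the definable equivalence ``same $D_x$-orbit''), and that is also what the paper does at the end. But the paper never bounds dim-$0$ orbits --- it shows they don't exist, except for the singleton $\{x\}$. The relation $x\sim y\iff\mathsf{dim}(x^{D_y})=0$ is a $G$-invariant definable equivalence relation (checking symmetry and transitivity is the work), and by definable primitivity it is either trivial or universal; Lemma~\ref{lemGx} rules out universality, so $\sim$ is equality and every orbit on $X\setminus\{x\}$ is one-dimensional. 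That is a much stronger consequence of primitivity than the one you extract (injectivity of $y\mapsto D_y$, i.e.\ triviality of $D_y=D_z$, which is a priori a strictly coarser relation than $\sim$).

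Your attempt to handle the dim-$0$ orbits directly has two genuine gaps. First, from the fact that $\{B_y:y\in L_x\}$ is uniformly commensurable you conclude via Schlichting that ``only boundedly many values of $B_y$ then appear''; this does not follow. Schlichting gives a single definable group commensurable with all of them, but a uniformly commensurable family can have infinitely many distinct members (for instance, the index-$2$ subgroups of an infinite elementary abelian $2$-group). Second, the step you yourself flag as ``the main obstacle'' --- showing each $\mathrm{Fix}_X(B)$ is finite --- is exactly the place where you would need the full strength of $\sim$ being trivial: if $D_x^g\supseteq B$ with $B$ wide in $D_x$, then $\mathsf{dim}(D_x\cap D_x^g)=\mathsf{dim}(D_x)$, i.e.\ $x\sim x^g$, and triviality of $\sim$ forces $g\in D_x$. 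Injectivity of $y\mapsto D_y$ alone does not yield this. So your route essentially requires proving the paper's claim anyway, and it is cleaner to prove it once and conclude that $\{x\}$ is the only dim-$0$ orbit.
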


\begin{proof}
Note that $D$ is definable normal and non-trivial. It acts transitively on $X$. Therefore, $\mathsf{dim}(D)\geq \mathsf{dim}(x^D)=1$ and for any $x\in X$, $$\mathsf{dim}(D_x)=\mathsf{dim}(D)-\mathsf{dim}(x^D)=\mathsf{dim}(D)-1\geq 1.$$

Define a relation $\sim$ on $X$ as: $x\sim y$ if $\mathsf{dim}(x^{D_y})=0$. Clearly, $\sim$ is reflexive. It is symmetric. If $\mathsf{dim}(x^{D_y})=0$, then $\mathsf{dim}(D_y/D_{yx})=0$. Therefore, $\mathsf{dim}(D_{yx})=\mathsf{dim}(D_y)=\mathsf{dim}(D_x)$, and $y^{D_x}$ has dimension 0. It is also transitive. If both $x^{D_y}$ and $y^{D_z}$ have dimension 0, then $\mathsf{dim}(D_x)=\mathsf{dim}(D_{xy})=\mathsf{dim}(D_y)=\mathsf{dim}(D_{yz})$. That is, both $D_{xy}$ and $D_{yz}$ are wide in $D_y$. Therefore, $D_{xyz}=D_{xy}\cap D_{yz}$ is also wide in $D_y$. Hence $\mathsf
{dim}(D_{xyz})=\mathsf{dim}(D_y)=\mathsf{dim}(D_z)$. We get $\mathsf{dim}(x^{D_z})=\mathsf{dim}(D_z/D_{xz})\leq \mathsf{dim}(D_z/D_{xyz})=0$. 

Moreover, $\sim$ is $G$-invariant and definable. It is definable by Lemma \ref{lem-xGy}. For $G$-invariance, if $x\sim y$, then for any $g\in G$, we have $(x^g)^{D_{y^g}}=(x^g)^{(D_y)^g}=(x^{D_y})^g$. Thus, $\mathsf{dim}((x^g)^{D_{y^g}})=\mathsf{dim}(x^{D_y})=0$. Consequently, $x^g\sim y^g$. 

By definable primitivity, $\sim$ is either trivial or the universal congruence. By Lemma \ref{lemGx}, there is $y\in X$ such that $\mathsf{dim}(\mathsf{PStab}_D(x,y))=\mathsf{dim}(D_x)-1$. Thus, $\sim$ is not the universal congruence. Therefore, every $D_x$ orbit on $X\setminus\{x\}$ has dimension 1. By the (EX)-condition, there can be only finitely-many such orbits.
\end{proof}

\begin{lemma}\label{lem-E=H_x}
Let  $D$ be a normal definable subgroup of $G$ with $\mathsf{dim}(D)\geq 2$. Suppose there is a definable subgroup $E$ such that $\mathsf{Stab}_D(x)\leq E\leq D$ and $\mathsf{dim}(E)=\mathsf{dim}(\mathsf{Stab}_D(x))$. Then $E=\mathsf{Stab}_D(x)$.
\end{lemma}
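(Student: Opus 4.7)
The plan is to look at the $E$-orbit $Y := x^E$ and show it must be just $\{x\}$, which will force $E \leq G_x \cap D = D_x$ and hence $E = D_x$.

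First I would compute $\dim(Y)$. Since $E \leq D$, we have $\mathsf{Stab}_E(x) = E \cap G_x = E \cap (D \cap G_x) = E \cap D_x$, and because $D_x \leq E$ this equals $D_x$. By orbit-stabilizer,
\[
\mathsf{dim}(Y) = \mathsf{dim}(E) - \mathsf{dim}(D_x) = 0,
\]
using the hypothesis $\mathsf{dim}(E) = \mathsf{dim}(D_x)$.

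Next I would exploit that $Y$ is $D_x$-invariant: since $D_x \leq E$, the orbit $Y = x^E$ is preserved by $D_x$, and because $D_x$ fixes $x$, the set $Y \setminus \{x\}$ is also $D_x$-invariant, so it decomposes as a union of $D_x$-orbits inside $X \setminus \{x\}$. Here the previous lemma is the key input: Lemma~\ref{lem-finiteorbit} (applied to $D$, using $\mathsf{dim}(D)\geq 2$ and the $\widetilde{\mathfrak{M}_s}$- and (EX)-conditions in force throughout this section) not only gives that $D_x$ has finitely many orbits on $X$, but its proof actually shows that every $D_x$-orbit contained in $X \setminus \{x\}$ has dimension $1$.

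Combining these, $Y \setminus \{x\}$ is a union of $1$-dimensional sets, whereas $\mathsf{dim}(Y) = 0$. Therefore $Y \setminus \{x\}$ must be empty, i.e.\ $Y = \{x\}$. This means $E$ fixes $x$, so $E \leq G_x \cap D = D_x$, and together with the standing inclusion $D_x \leq E$ we conclude $E = D_x$. I do not anticipate a serious obstacle: the entire argument is a short packaging of Lemma~\ref{lem-finiteorbit} together with orbit-stabilizer for $E$, and the only subtlety is remembering to read off from the proof of Lemma~\ref{lem-finiteorbit} the sharper fact that the non-trivial $D_x$-orbits on $X$ are exactly $1$-dimensional rather than merely of bounded cardinality.
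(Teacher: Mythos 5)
Your proof is correct and relies on the same key input as the paper: the fact, established in the proof of Lemma \ref{lem-finiteorbit}, that every nontrivial $D_x$-orbit on $X$ has dimension $1$ (equivalently, that the relation $x\sim y\Leftrightarrow\dim(x^{D_y})=0$ is the equality relation). The paper argues element-by-element — for each $m\in E$ it shows $\dim(D_{x^m}\cap D_x)=\dim(D_x)$, hence $x\sim x^m$, hence $x^m=x$ — whereas you package the same computation globally by observing $\dim(x^E)=0$ and that $x^E\setminus\{x\}$ would otherwise contain a $1$-dimensional $D_x$-orbit; the two arguments are interchangeable rephrasings of the same idea, and your orbit-level framing is arguably cleaner to read.
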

\begin{proof}
Let $D_x:=\mathsf{Stab}_D(x)$. 
As $\mathsf{dim}(E)=\mathsf{dim}(D_x)$, we have $\mathsf{dim}((D_x)^m\cap D_x)=\mathsf{dim}(D_x)$ for any $m\in E$. 
Note that $\mathsf{dim}((D_x)^m\cap D_x)=\mathsf{dim}(D_x)$ if and only if  $\mathsf{dim}(D_{x^m}\cap D_x)=\mathsf{dim}(D_x)$ if and only if $\mathsf{dim}(x^{D_{x^m}})=0$ if and only if $x\sim x^m$, where $\sim$ is defined as in Lemma \ref{lem-finiteorbit}. By the same lemma, $x\sim y$ if only if $x=y$. Therefore, $x^m=x$ and $m\in D_x$. We conclude that $E=D_x$. 
\end{proof}

 \begin{lemma}\label{lem-subgroupindex}
If $D$ is a definable normal subgroup of $G$ of finite index and $\mathsf{dim}(D)\geq 2$, then $D$ also acts definably primitively on $X$.
 \end{lemma}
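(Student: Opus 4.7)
Definable primitivity requires two things: transitivity of $D$ on $X$, and definable maximality of $\mathsf{Stab}_D(x)$ in $D$. Since $\mathsf{dim}(D)\geq 2>0$, $D$ is non-trivial, and as a definable normal subgroup of $G$ it acts transitively on $X$ by Lemma~\ref{lem-trivialtansitive}. For maximality, I would take an arbitrary definable subgroup $E$ with $\mathsf{Stab}_D(x)\leq E\leq D$ and aim to show $E\in\{\mathsf{Stab}_D(x),D\}$.

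Set $B:=x^E$; then $B$ is a $D$-block through $x$ with $\mathsf{Stab}_D(B)=E$ (every subgroup of $D$ above $\mathsf{Stab}_D(x)$ gives rise to such a block). By Orbit--Stabilizer $\mathsf{dim}(B)=\mathsf{dim}(E)-\mathsf{dim}(\mathsf{Stab}_D(x))\in\{0,1\}$. If $\mathsf{dim}(B)=0$, Lemma~\ref{lem-E=H_x} yields directly $E=\mathsf{Stab}_D(x)$. If $\mathsf{dim}(B)=1$ and $B=X$, then $E$ is transitive on $X$ with $\mathsf{Stab}_E(x)=\mathsf{Stab}_D(x)$, and a short orbit-comparison forces $E=D$.

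The remaining case is $\mathsf{dim}(B)=1$ and $B\subsetneq X$. Let $\sim$ be the equivalence relation whose classes are the $D$-translates of $B$; since $D$ permutes them transitively they all have dimension $1$, so the (EX)-condition forces only finitely many classes, whence $[D:E]<\infty$. For each $s\in\mathsf{Stab}_G(x)$ let $\sim^s$ be the $s$-conjugate of $\sim$; its $x$-class is $B^s$ with block-stabilizer $E^s$. From $[G:N_G(E)]\leq[G:N_D(E)]=[G:D]\cdot[D:N_D(E)]<\infty$ one sees that $\{E^s:s\in\mathsf{Stab}_G(x)\}$ is finite, so the common refinement $\sim^{**}:=\bigcap_{s\in\mathsf{Stab}_G(x)}\sim^s$ is a definable equivalence relation. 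It is $D$-invariant (each $\sim^s$ is, since $D\trianglelefteq G$) and $\mathsf{Stab}_G(x)$-invariant by construction, and hence $G$-invariant since $G=D\cdot\mathsf{Stab}_G(x)$.

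Now $G$-definable primitivity forces $\sim^{**}$ to be either universal or trivial. In the universal case, $\bigcap_s B^s=X$ forces each $B^s=X$ and hence $B=X$, contradicting $B\subsetneq X$. In the trivial case, the $x$-class of $\sim^{**}$ is $\{x\}$, and the block--stabilizer correspondence gives $\bigcap_s E^s=\mathsf{Stab}_D(\{x\})=\mathsf{Stab}_D(x)$; but the left-hand side is a finite intersection of finite-index subgroups of $D$ and so itself has finite index in $D$, which would make $X\cong D/\mathsf{Stab}_D(x)$ finite, contradicting $\mathsf{dim}(X)=1$. The main thing to verify carefully is that $\sim^{**}$ really is $G$-invariant and definable; once that is in hand, the $G$-definable primitivity assumption closes out the argument.
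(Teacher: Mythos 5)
Your proof is correct, and its first half is essentially the paper's: reduce to a definable $E$ with $\mathsf{Stab}_D(x)\leq E\leq D$, note that $\mathsf{dim}(E)\in\{\mathsf{dim}(\mathsf{Stab}_D(x)),\mathsf{dim}(D)\}$, dispose of the lower case via Lemma~\ref{lem-E=H_x}, and in the upper case use the (EX)-condition on the pairwise disjoint $D$-translates of the block $x^E$ (each of dimension $1$) to deduce $[D:E]<\infty$; the paper phrases this with right cosets $Md_i$ of $M=E$, but it is the same block system. Where you diverge is in closing the argument from $[D:E]<\infty$. The paper immediately gets $[G:E]<\infty$ since $[G:D]<\infty$, applies Poincar\'e's Theorem to produce a definable normal $S\trianglelefteq G$ of finite index with $S\leq E$, invokes definable primitivity to get $x^S=X$ and hence $x^E=X$, and then the coset comparison ($x^d=x^e$ gives $de^{-1}\in D_x\leq E$, so $d\in E$) yields $E=D$. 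You instead split into $x^E=X$ (handled the same way) and $x^E\subsetneq X$, and in the latter build the common refinement $\sim^{**}$ of the $\mathsf{Stab}_G(x)$-conjugates of the block system, verify that it is a definable $G$-invariant equivalence relation (your finiteness of $\{E^s\}$ indeed follows, since $E\leq N_D(E)$ gives $[D:N_D(E)]\leq[D:E]<\infty$, and $G$-invariance follows from $G=D\cdot\mathsf{Stab}_G(x)$), and then apply definable primitivity to $\sim^{**}$. That detour is valid, but strictly heavier than necessary: once $[D:E]<\infty$ is in hand, Poincar\'e together with primitivity already forces $x^E=X$, so the sub-case $x^E\subsetneq X$ never occurs and the whole refinement construction can be bypassed.
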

 \begin{proof}
Let $M$ be a definable subgroup of $D$ such that $D_x\leq M\leq D$, where $D_x:=\mathsf{Stab}_D(x)$. Then either $\mathsf{dim}(M)=\mathsf{dim}(D_x)=n-1$ or $\mathsf{dim}(M)=\mathsf{dim}(G)$.

%, then the set $x^M$ will have dimension 1. As $D_x\leq M\leq D$ and $D$ acts transitively on $X$, we get $D$ permutes the orbits of $M$ in $X$. 
If $\mathsf{dim}(M)=\mathsf{dim}(D)=\mathsf{dim}(G)$, then $$\mathsf{dim}(x^M)=\mathsf{dim}(M/M_x)=\mathsf{dim}(M/M\cap D_x)\geq \mathsf{dim}(D/D_x)=1.$$ Consider the right coset space of $M$ in $D$. Assume $D=\bigcup_{i\in I}Md_i$ with $Md_i\neq Md_j$ for $i\neq j$. Let $\mathcal{E}:=\{x^{Md_i}:i\in I\}$. We claim that $x^{Md_i}\cap x^{Md_j}=\emptyset$ for any $i\neq j$. Suppose $x^{Md_i}\cap x^{Md_j}\neq\emptyset$, then there are $m_i,m_j\in M$ with $x^{m_id_i}=x^{m_jd_j}$. Therefore, $m_id_i(d_j)^{-1}(m_j)^{-1}\in D_x$. As $D_x\leq M$, we get $d_i(d_j)^{-1}\in M$, hence $i=j$. Note that $\mathsf{dim}(x^{Md_i})=\mathsf{dim}(x^M)=1$ for all $i\in I$. By the (EX)-condition, $I$ must be finite. Consequently, $M$ has finite index in $D$, hence $[G:M]<\infty$. By Poincar\'{e}'s Theorem, $M$ contains a definable normal subgroup $S$ of $G$ which also has finite index in $G$. Therefore, $x^S=X$ and $x^M\supseteq x^S=X$. For any $d\in D$, there is $m\in M$ such that $x^d=x^m$. Thus, $dm^{-1}\in D_x\leq M$ and $d\in M$. Therefore, $D=M$.
%there are only finitely many such orbits $$x^M,(x^{d_1})^M,\ldots,(x^{d_{\ell}})^M.$$ Let $d_0:=\mathsf{1}$. For any $d\in D$, there are $0\leq i\leq \ell$ and $m\in M$ such that $x^d=x^{d_im}$. Hence, $d_imd^{-1}\in D_x\leq M$, which entails $d\in d_iM$. Therefore, $D=\bigcup_{i\leq\ell}d_i M$. 

Suppose $\mathsf{dim}(M)=\mathsf{dim}(D_x)$, then by Lemma \ref{lem-E=H_x}, we get $M=D_x$.  Therefore, $D$ acts definably primitively on $X$.
\end{proof}
 
\begin{lemma}\label{lem-C(H)}
Let $H=T\times T^{g_1}\times\cdots\times T^{g_m}$ be as above. Then $H=T$ and $C_G(H)$ is trivial. In fact, $H=\prod_{i\in I}soc(G_i)/\mathcal{U}$ where $soc(G_i)$ is the socle of $G_i$.
\end{lemma}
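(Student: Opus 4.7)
The plan is to first establish $H = T$ (i.e., the simple factor $T$ is itself normal in $G$), then show $C_G(H) = 1$, and finally deduce the ultraproduct equality by transfer. Throughout, the key technique is the commuting-transitive principle --- two commuting normal subgroups that both act transitively on a faithful set must both act regularly --- combined with a contradiction from Lemma \ref{lem-finiteorbit} via the infinitude of conjugacy classes in an infinite simple pseudofinite group.

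To show $H = T$, suppose for contradiction that $m \geq 1$. The group $G$ permutes the simple factors $\{T_0, \ldots, T_m\}$ by conjugation, so $N_G(T) = \{g \in G : T^g = T\}$ has finite index in $G$, and Poincar\'{e}'s theorem yields a definable normal subgroup $K := \bigcap_i N_G(T_i) \trianglelefteq G$ of finite index normalizing each $T_i$. Since $\mathsf{dim}(K) = \mathsf{dim}(G) \geq 3$, Lemma \ref{lem-subgroupindex} shows $K$ acts definably primitively on $X$, and then each $T_i$, as a non-trivial definable normal subgroup of $K$, acts transitively on $X$ by Lemma \ref{lem-trivialtansitive}. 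For $i \neq 0$, $T_i$ commutes with $T$; if $s \in (T_i)_x$, then for any $y = x^t$ with $t \in T$ we have $y^s = (x^t)^s = (x^s)^t = y$, so $s$ fixes all of $X$ and hence $s = \mathsf{1}$. Thus $T_i$ acts regularly, and symmetrically so does $T$, forcing $\mathsf{dim}(T) = \mathsf{dim}(X) = 1$. Identifying $X$ with $T$ via the regular $T$-action, any transitive subgroup of $\mathrm{Sym}(X)$ commuting with $T_R$ must sit inside $C_{\mathrm{Sym}(X)}(T_R) = T_L$ and, being regular, must equal $T_L$; hence each $T_i$ with $i \neq 0$ equals $T_L$, which rules out $m \geq 2$ immediately. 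For $m = 1$, the subgroup $D := T \times T_L \trianglelefteq K$ has dimension $2$, and $(K,X)$ inherits membership in $\mathcal{S}$ together with the $\widetilde{\mathfrak{M}_s}$ and (EX) conditions, so Lemma \ref{lem-finiteorbit} applies and $D_x$ has only finitely many orbits on $X$. A direct computation gives $D_x = \{(t, (t^{-1})_L) : t \in T\}$ acting on $X = T$ by conjugation, so its orbits are exactly the $T$-conjugacy classes, contradicting the fact (a consequence of CFSG and the growth of class numbers in finite simple groups of Lie type of bounded rank) that $T$ has infinitely many conjugacy classes.

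With $H = T$ in hand, the same technique proves $C_G(T) = \{\mathsf{1}\}$: if $C_G(T)$ were non-trivial, it would be a definable normal subgroup of $G$, hence transitive on $X$; the commuting-transitive argument then forces $C_G(T) = T_L$, and Lemma \ref{lem-finiteorbit} applied to the definable normal subgroup $T \times T_L \trianglelefteq G$ reproduces the conjugacy-class contradiction. For the ultraproduct statement, $C_G(H) = \{\mathsf{1}\}$ transfers to $C_{G_i}(H_i) = \{\mathsf{1}\}$ for $i$ in some $J \in \mathcal{U}$; any other minimal normal subgroup $H'_i$ of $G_i$ would intersect $H_i$ trivially and commute with it, hence lie in $C_{G_i}(H_i) = \{\mathsf{1}\}$, so $H_i$ is the unique minimal normal subgroup of $G_i$ and equals $\mathrm{soc}(G_i)$. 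The main obstacle is the precise identification of $D_x$ as a diagonal copy of $T$ acting by conjugation, combined with the CFSG-based fact that infinite simple pseudofinite groups of bounded Lie rank have infinitely many conjugacy classes.
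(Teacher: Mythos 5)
Your proof is correct, but it takes a genuinely different route from the paper's. The paper forms $G_0 := \bigcap_{g\in G}(G_T)^g$, observes that if $S := C_{G_0}(T)$ is non-trivial then both $T$ and $S$ act regularly (so $\mathsf{dim}(T)=1$), and then invokes Fact~\ref{fact-frank-finitebyabelian}(2) directly: $T$ would have a broad definable characteristic finite-by-abelian subgroup, which by simplicity is $T$ itself, contradicting that $T$ is infinite non-abelian. This one short argument simultaneously kills $m\geq 1$ (since $T^{g_1}\leq S$) and the case $C_G(T)\neq\mathsf{1}$. You instead reach the same regularity conclusion via the same commuting-transitive step, but then split into cases: you rule out $m\geq 2$ with the classical identification $C_{\mathrm{Sym}(X)}(T_R)=T_L$ forcing two distinct direct factors to coincide, and you handle $m=1$ (and later $C_G(T)\neq\mathsf{1}$) by identifying the diagonal stabilizer in $T_R\times T_L$ and applying Lemma~\ref{lem-finiteorbit} against the infinitude of conjugacy classes in $T$. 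Your route is more concrete and explicitly permutation-theoretic, at the cost of a case split and a separate external fact; the paper's is shorter because Fact~\ref{fact-frank-finitebyabelian}(2) does all the work at once. Two small remarks: you do not actually need CFSG for the conjugacy-class step, since Landau's theorem (finitely many finite groups with a given class number) already shows a pseudofinite simple group has infinitely many classes; and the passage to the finite-index subgroup $K$ is not needed for the $m=1$ application of Lemma~\ref{lem-finiteorbit}, since $D=H$ is already normal in $G$, though it is harmless.
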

\begin{proof}
Consider $G_T:=\{g\in G:T^g=T\}$. As $\{T,T^{g_1},\ldots, T^{g_m}\}$ is permuted by $G$, the index of $G_T$ in $G$ is finite. By Poincar\'{e}'s Theorem, there is a definable normal subgroup $G_0:=\bigcap_{g\in G}(G_T)^g$, which also has finite index in $G$. By definition, $H\leq G_0$. By Lemma \ref{lem-subgroupindex}, $G_0$ also acts definably primitively on $X$. 

Note that $T$ is normal in $G_0$. Consider $S:=C_{G_0}(T)$. It is definable and normal in $G_0$. If $S$ is non-trivial, then $T$ and $S$ centralize each other and both act transitively on $X$. Fix $x\in X$. For any $h\in T$, we have $\mathsf{Stab}_S(x^h)=(\mathsf{Stab}_S(x))^h=\mathsf{Stab}_S(x)$. Since $x^{T}=X$, we get $\mathsf{Stab}_S(x)=\{\mathsf{1}\}$. Similarly, $\mathsf{Stab}_{T}(x)=\{\mathsf{1}\}$. We conclude that both $S$ and $T$ act regularly on $X$. Therefore, $T$ has dimension 1. By Fact \ref{fact-frank-finitebyabelian}(2), $T$ has a broad finite-by-abelian normal subgroup. As $T$ is simple, it is abelian, which contradicts Theorem \ref{thm-normalabelian}.

Therefore, $C_{G_0}(T)$ is trivial and $H=T$. By the same reason, $C_G(H)=C_G(T)$ is also trivial. Suppose $\{D_i:i\in I\}$ is another collection of minimal normal subgroups of $G_i$ such that $\{i\in I:D_i\neq H_i\}\in\mathcal{U}$. Then $D_i$ and $H_i$ centralizes each other for all $D_i\neq H_i$. Therefore, $\prod_{i\in I}D_i/\mathcal{U}\leq C_G(H)$, which entails that $\prod_{i\in I}D_i/\mathcal{U}$ is trivial. Hence, $H=\prod_{i\in I}soc(G_i)/\mathcal{U}$.
\end{proof}

Now, we can finish our analysis of higher dimensional cases. We state here a result concerning finite simple groups and a result about counting dimension in ultraproducts of one-dimensional asymptotic classes and SU-rank.

\begin{fact}\label{fact-psl2}(\cite[the Claim in Lemma 5.15]{Elwes}) Let $G(q)$ be a group of Lie type (possibly twisted) over a finite field $\mathbb{F}_q$, with $G\neq \text{PSL}_2(\mathbb{F}_q)$, and let $P(q)$ be a parabolic subgroup of $G(q)$. Then $|G(q):P(q)|>O(q)$.
\end{fact}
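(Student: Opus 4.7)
The plan is to invoke the standard parabolic subgroup theory of finite groups of Lie type together with the point-count formulas for the associated generalized flag varieties. For a connected reductive group $\mathbf{G}$ defined over $\mathbb{F}_q$ with Lie rank $r$, the parabolic subgroups of $G(q)$ up to conjugacy are parametrized by subsets of the simple roots (or, for twisted forms, by Frobenius-orbits of subsets), the minimal such being the Borel subgroup $B(q)$. The index $[G(q):P(q)]$ equals the number of $\mathbb{F}_q$-points of a smooth projective variety $G/P$, and is therefore a polynomial in $q$ whose degree equals $\dim_{\mathbb{C}}(G/P)$. To bound $[G(q):P(q)]$ from below, it suffices to consider maximal parabolics (those obtained by removing a single node from the Dynkin diagram, up to Frobenius), and among these to pick the one minimizing $\dim(G/P)$.

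The proof then becomes a case-by-case verification across the (twisted or untwisted) Lie types. For type $A_n$ with $n\geq 2$ (i.e.\ $\textup{PSL}_{n+1}(q)$ with $n+1\geq 3$), the smallest flag variety is projective space $\mathbb{P}^n$, giving $[G(q):P(q)]=(q^{n+1}-1)/(q-1)\geq q^2+q+1$. For the other classical families of rank $\geq 2$ (types $B_n, C_n, D_n$, and the twisted versions ${}^2A_n$ with $n\geq 3$, ${}^2D_n$, ${}^3D_4$, ${}^2E_6$), the minimal flag variety still has complex dimension $\geq 2$, so the index is already a polynomial of degree $\geq 2$ in $q$. For the exceptional types $G_2, F_4, E_6, E_7, E_8$ the rank is $\geq 2$ and all minimal flag varieties have dimension $\geq 5$, so the conclusion is even stronger.

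The delicate cases are the rank-$1$ groups, since here the only proper parabolic is the Borel itself. These are exactly: the untwisted $\textup{PSL}_2(q)$ (the excluded case, where $[G(q):B(q)]=q+1=O(q)$), the unitary group ${}^2A_2(q)=\textup{PSU}_3(q)$ with $[G(q):B(q)]=q^3+1$, the Suzuki groups ${}^2B_2(q)$ with $[G(q):B(q)]=q^2+1$, and the small Ree groups ${}^2G_2(q)$ with $[G(q):B(q)]=q^3+1$. In each of these remaining rank-$1$ cases one verifies by direct inspection that $[G(q):B(q)]$ is a polynomial in $q$ of degree at least $2$, hence $\omega(q)$.

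The main obstacle is simply the bookkeeping to cover all twisted and exceptional families uniformly. This can be handled either by the case-by-case approach above using standard order formulas (as in Carter's book on finite groups of Lie type or the ATLAS), or more uniformly by observing that once $\mathbf{G}$ is not of type $A_1$ the minimal $\dim_{\mathbb{C}}(G/P)$ over maximal parabolics is at least $2$ for untwisted groups and at least $2$ (in terms of the $\mathbb{F}_q$-dimension counted by the polynomial degree in $q$) for all twisted forms, the only exception being the $\mathbb{F}_q$-form $\textup{PSL}_2(\mathbb{F}_q)$ itself. Combining these inputs yields $[G(q):P(q)]\geq cq^2$ for some absolute $c>0$ in all cases, which is strictly of larger order than $O(q)$.
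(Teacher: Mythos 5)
Your proof is correct and follows the standard case-by-case route through the classification of finite groups of Lie type; the paper itself cites this statement to Elwes et al.\ without reproving it, and the cited proof is of the same kind (parabolic index formulas checked type by type). One small caveat: for the Suzuki and Ree families the Steinberg endomorphism is a ``square root of Frobenius,'' so the degree of the polynomial counting $\mathbb{F}_q$-points of $G/P$ is roughly half of $\dim_{\mathbb{C}}(G/P)$ rather than equal to it (e.g.\ for $^2B_2$, $\dim_{\mathbb{C}} G/B = 4$ but the Borel index is $q^2+1$, degree~$2$); you sidestep this correctly by computing the rank-one Borel indices $q^2+1$ and $q^3+1$ directly, but the blanket ``degree equals complex dimension'' assertion should carry this qualification.
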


\begin{lemma}\label{lem-dimsame}
Let $\mathsf{dim}_1$ and $\mathsf{dim}_2$ be two additive integer-valued dimensions on definable sets of $M$. Suppose for any definable subset $X\subseteq M$ we have $\mathsf{dim}_1(X)=\mathsf{dim}_2(X)$, and that $\mathsf{dim}_1$ is definable, i.e. for any $\emptyset$-definable formula $\varphi(x,y)$ there are $\emptyset$-definable formulas $\psi_1(y),\ldots,\psi_n(y)$ and distinct natural numbers $N_1,\ldots,N_n$ such that $M\models \forall y\left(\exists x\varphi(x,y)\leftrightarrow\bigvee_{1\leq i\leq n}\psi_i(y)\right)$ and $\mathsf{dim}_1(\varphi(M^{|x|},b))=N_i$ if and only if $M\models \psi_i(b)$ for all $b\in M^{|y|}$ and $1\leq i\leq n$. Then $\mathsf{dim}_1$ and $\mathsf{dim}_2$ coincide on all definable sets of $M$. 
\end{lemma}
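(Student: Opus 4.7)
The plan is to induct on the arity $n\geq 1$, showing that $\mathsf{dim}_1(X)=\mathsf{dim}_2(X)$ for every definable $X\subseteq M^n$; the base case $n=1$ is the hypothesis. For the inductive step with $n\geq 2$, I would take a definable $X\subseteq M^n$ given by some $\varphi(x,y,\bar c)$ with $x$ ranging over $M^{n-1}$ and $y$ over $M$, let $\pi\colon X\to M$ be the projection to the last coordinate, and set $Y:=\pi(X)\subseteq M$.

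The definability assumption on $\mathsf{dim}_1$ applied to the $\emptyset$-definable formula $\varphi(x,y,\bar z)$ produces $\emptyset$-definable formulas $\psi_1(y,\bar z),\ldots,\psi_k(y,\bar z)$ and distinct natural numbers $N_1,\ldots,N_k$ such that the sets $Y_i:=\psi_i(M,\bar c)$ partition $Y$, with $\mathsf{dim}_1(\varphi(M^{n-1},b,\bar c))=N_i$ for every $b\in Y_i$. Writing $X_i:=\pi^{-1}(Y_i)$, every fiber of $\pi$ above a point of $Y_i$ is a definable subset of $M^{n-1}$ of $\mathsf{dim}_1$-dimension $N_i$, hence also of $\mathsf{dim}_2$-dimension $N_i$ by the induction hypothesis applied in arity $n-1$.

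Since both dimensions are additive, fibration combined with additivity at a generic point gives the equidimensional-fiber identity
\[
\mathsf{dim}_j(X_i)=\mathsf{dim}_j(Y_i)+N_i\qquad\text{for } j\in\{1,2\}.
\]
As $Y_i\subseteq M$, the hypothesis gives $\mathsf{dim}_1(Y_i)=\mathsf{dim}_2(Y_i)$, hence $\mathsf{dim}_1(X_i)=\mathsf{dim}_2(X_i)$, and the union axiom then yields $\mathsf{dim}_1(X)=\max_i\mathsf{dim}_1(X_i)=\max_i\mathsf{dim}_2(X_i)=\mathsf{dim}_2(X)$. The main subtlety is the equidimensional-fiber identity itself, which is not immediate from the fibration axiom (that axiom only gives the lower bound $\geq$); the matching upper bound requires selecting a generic $(a,b)\in X_i$ with $\mathsf{dim}_j(a,b/\bar c)=\mathsf{dim}_j(X_i)$ and invoking additivity in the form $\mathsf{dim}_j(a,b/\bar c)=\mathsf{dim}_j(a/b\bar c)+\mathsf{dim}_j(b/\bar c)$, together with the bounds $\mathsf{dim}_j(a/b\bar c)\leq N_i$ and $\mathsf{dim}_j(b/\bar c)\leq\mathsf{dim}_j(Y_i)$. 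Once this generic-point argument is made explicit, the induction step and the rest of the proof are routine.
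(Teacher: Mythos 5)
Your proof is correct and follows essentially the same inductive strategy as the paper's; the only cosmetic difference is that you fiber over the last single coordinate (so fibers lie in $M^{n-1}$, handled by the inductive hypothesis, while the base $Y_i\subseteq M$ is handled directly), whereas the paper fibers over the last $m-1$ coordinates (so fibers lie in $M$ and the base is handled by induction). You also correctly isolate the one point the paper glosses over — that additivity alone, via a generic point, upgrades the fibration inequality to the equidimensional-fiber equality $\mathsf{dim}_j(X_i)=\mathsf{dim}_j(Y_i)+N_i$.
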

\begin{proof}
Let $Y\subseteq M^m$ be a definable set defined over  $a$. We induct on $m$. The case $m=1$ is valid by assumption. Suppose $m>1$. Let $\varphi(x_1,\ldots,x_m,a)$ be the formula defining $Y$. As $\mathsf{dim}_1$ is definable, there are distinct natural numbers $K_1,\ldots,K_k$ for some $k\in\mathbb{N}$ and formulas $\psi_1(x_2,\ldots,x_m,a),\ldots,$ $\psi_k(x_2,\ldots,x_m,a)$ such that $\exists x_1\varphi(x_1,M^{m-1},a)=\bigcup_{1\leq i\leq k}\psi_i(M^{m-1},a)$ and $\mathsf{dim}_1(\varphi(M,x_2,\ldots,x_m,a))=N_i$ if and only if $M\models \psi_i(x_2,\ldots,x_m,a)$ for any $x_2,\ldots,x_m\in M^{m-1}$ and $1\leq i\leq k$. Note that $\mathsf{dim}_1(\varphi(M,x_2,\ldots,x_m,a))=\mathsf{dim}_2(\varphi(M,x_2,\ldots,x_m,a))$ for any $x_2,\ldots,x_m\in M^{m-1}$. Hence, we also have $\mathsf{dim}_2(\varphi(M,x_2,\ldots,x_m,a))=N_i$ if and only if $M\models \psi_i(x_2,\ldots,x_m,a)$. Let $Y_i:=\psi_i(M^{m-1},a)$. We may assume $Y_i\neq\emptyset$ for all $1\leq i\leq k$.  By additivity of $\mathsf{dim}_1$ and $\mathsf{dim}_2$, we have $$\mathsf{dim}_j(Y)=\max\{N_i+\mathsf{dim}_j(Y_i):1\leq i\leq k\}$$ for $j\in\{1,2\}$. By induction hypothesis $\mathsf{dim}_1(Y_i)=\mathsf{dim}_2(Y_i)$ for any $1\leq i\leq k$. Therefore, $\mathsf{dim}_1(Y)=\mathsf{dim}_2(Y)$.
\end{proof}
%Notation: We write $A\cong B$ to denote there is a definable isomorphism between $A$ and $B$. When $A,B$ are groups, we write $A\lesssim B$ to denote $A$ can be definably embeds into $B$ in the group structure.
\begin{corollary}\label{cor-sudimc}
Suppose $M=\prod_{i\in I}M_i/\mathcal{U}$ is an infinite ultraproduct of a one-dimensional asymptotic class. Let $\mathsf{dim}_c$ be the counting dimension on $M$, that is, for a definable non-empty set $X=\prod_{i\in I}X_i/\mathcal{U}\subseteq M^n$, define $\mathsf{dim}_c(X):=d$ when there is a real number $r>0$ with $$\lim_{\mathcal{U}}\left(\frac{|X_i|}{|M_i|^d}\right)=r.$$ Then $\mathsf{dim}_c$ coincides with $SU$-rank.
\end{corollary}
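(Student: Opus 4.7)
The plan is to invoke Lemma \ref{lem-dimsame} with $\mathsf{dim}_1 := \mathsf{dim}_c$ and $\mathsf{dim}_2 := SU$. This reduces the proof to three separate tasks: verifying that both functions are additive integer-valued dimensions on interpretable sets of $M$, verifying that they agree on definable subsets of the home sort $M$, and verifying that $\mathsf{dim}_c$ is definable in the sense required by the lemma.

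First I would recall that, by the theorem of Macpherson and Steinhorn that ultraproducts of one-dimensional asymptotic classes are supersimple of $SU$-rank $1$, the $SU$-rank is an additive integer-valued dimension on all interpretable sets (note that since $SU(M)=1$, the $SU$-rank of any interpretable set is finite, so no case analysis as in Remark \ref{remark-dim} is needed). For $\mathsf{dim}_c$, the existence of the defining limit for every definable $X\subseteq M^n$ follows from the asymptotic class hypothesis by a straightforward induction on the arity, using the uniformity in parameters that is built into the definition. Invariance, algebraicity and union are immediate; fibration and additivity reduce to standard Fubini-type counting arguments applied coordinate by coordinate in the finite approximations $M_i$.

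Next I would verify the definability of $\mathsf{dim}_c$ required by Lemma \ref{lem-dimsame}. This is essentially a direct reformulation of the definition of a one-dimensional asymptotic class: for any $\emptyset$-definable formula $\varphi(x,\bar y)$ with $|x|=1$ there is a $\emptyset$-definable partition of the parameter space into finitely many pieces $\psi_1(\bar y),\ldots,\psi_n(\bar y)$ and a corresponding finite set of pairs $(\mu_j,d_j)$ such that $|\varphi(M_i,\bar b)|=\mu_j|M_i|^{d_j}+o(|M_i|^{d_j})$ whenever $M_i\models\psi_j(\bar b)$. The integer $d_j$ is precisely $\mathsf{dim}_c(\varphi(M^{|x|},\bar b))$ on the $j$-th piece, which is exactly the definability condition of Lemma \ref{lem-dimsame}.

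Finally, for agreement on definable $X\subseteq M$: by the one-dimensionality of the asymptotic class, $|X_i|$ is either eventually bounded in $i$ (in which case $X$ is finite, so $\mathsf{dim}_c(X)=0=SU(X)$) or grows as $\mu|M_i|$ for some $\mu>0$ (in which case $\mathsf{dim}_c(X)=1$, and likewise $SU(X)=1$ since $X$ is infinite and $SU(M)=1$). Applying Lemma \ref{lem-dimsame} then yields $\mathsf{dim}_c=SU$ on all interpretable sets. The only step requiring care is the verification of the dimension axioms for $\mathsf{dim}_c$ on sets of arbitrary arity—particularly fibration—but this is a routine Fubini-type calculation within the asymptotic class framework and is not a real obstacle.
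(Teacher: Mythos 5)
Your proposal is correct and takes essentially the same approach as the paper: both reduce the claim to Lemma \ref{lem-dimsame} by observing that $\mathsf{dim}_c$ and $SU$-rank agree on definable subsets of the home sort (each giving $0$ for finite and $1$ for infinite sets, since $SU(Th(M))=1$ by Macpherson--Steinhorn) and that $\mathsf{dim}_c$ is definable by the asymptotic-class axioms. You spell out the verification of the dimension axioms and the definability clause in more detail than the paper does, but the structure of the argument is identical.
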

\begin{proof}
As $M$ is an infinite ultraproduct of a one-dimensional asymptotic class, $SU(Th(M))=1$ by \cite[Lemma 4.1]{macpherson2008one}. Therefore, for any non-empty definable subset $X\subseteq M$, either $X$ is finite of $SU$-rank 0 or is infinite of $SU$-rank 1. Also note that $\mathsf{dim}_c(X)=0$ if $X$ is finite, and $\mathsf{dim}_c(X)=1$ if $X$ is infinite and $X\subseteq M$. Since $\mathsf{dim}_c$ is definable, we apply Lemma \ref{lem-dimsame} and get the desired result.
\end{proof}

\begin{theorem}\label{thm-dim3}
Let $(G,X)$ be a pseudofinite definably primitive permutation group satisfies the following conditions:
\begin{enumerate}
\item
there is an additive integer-valued dimension on $(G,X)$ such that $\mathsf{dim}(X)=1$ and $\mathsf{dim}(G)\geq 3$;
\item
$G$ and its definable sections satisfy the $\widetilde{\mathfrak{M}_c}$-condition;
\item
$X$ satisfies the (EX)-condition;
\item
$(G,X)$ satisfies the $\widetilde{\mathfrak{M}_s}$-condition.
\end{enumerate}
Let $s(G):=\prod_{i\in I}soc(G_i)/\mathcal{U}$. Then $\mathsf{dim}(G)=3$, $s(G)$ is definable and there is an interpretable pseudofinite field $F$ of dimension 1 such that we can identify $X\cong \textup{PG}_1(F)$, $s(G)\cong \textup{PSL}_2(F)$ and $\textup{PSL}_2(F)\leq G\leq \textup{P}\Gamma\textup{L}_2(F)$.
\end{theorem}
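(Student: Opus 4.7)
The plan is to combine the structural results from the preceding sections with CFSG to identify the socle $s(G)$ as $\mathrm{PSL}_2(F)$ and then to match up the action. First I invoke Theorem \ref{thm-normalabelian} to exclude any nontrivial normal abelian subgroup of $G$. Consequently the minimal normal subgroups of the factors $G_i$ are nonabelian, and Lemma \ref{lemT} writes the ultraproduct $H$ as a finite product $T \odot T^{g_1} \odot \cdots \odot T^{g_m}$ of conjugates of a single definable infinite simple pseudofinite group $T$. Lemma \ref{lem-C(H)} then collapses this product to a single factor: $H = T = \prod_i \mathrm{soc}(G_i)/\mathcal{U}$, and shows $C_G(T) = 1$, so $G$ embeds in $\mathrm{Aut}(T)$. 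This is the structural setup on which everything else rests.

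Next I exploit the Wilson--Ryten theorem: as $T$ is an infinite simple pseudofinite group and the proof of Lemma \ref{lemT} already bounds its Lie rank, $T$ is elementarily equivalent to an ultraproduct of finite simple groups of Lie type $T_i = L_i(q_i)$ over finite fields $\mathbb{F}_{q_i}$, and it interprets the pseudofinite field $F := \prod_i \mathbb{F}_{q_i}/\mathcal{U}$. Since $T \trianglelefteq G$ is definable and nontrivial, it acts transitively on $X$, and hence the stabilizer $T_x$ satisfies $\dim(T_x) = \dim(T) - 1$. The crucial link is to translate the hypothesis $\dim(X) = 1$, together with the (EX)- and $\widetilde{\mathfrak{M}_s}$-conditions, into a finite-level bound of the form $|X_i| = O(q_i)$, by comparing the abstract dimension with the natural counting/SU-rank dimension on sets interpretable in $F$.

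With such a bound in hand, Fact \ref{fact-psl2} applies: any parabolic of $L_i$ has index exceeding $O(q_i)$ unless $L_i = \mathrm{PSL}_2(q_i)$, and standard CFSG bounds on the minimal index of a maximal subgroup in a finite simple group of Lie type of rank $\geq 2$ extend this to arbitrary maximal subgroups containing $(T_i)_x$. Thus $L_i = \mathrm{PSL}_2(q_i)$ for $\mathcal{U}$-almost all $i$, giving $T \cong \mathrm{PSL}_2(F)$, $\dim(T) = 3$, and forcing $T_x$ to be a Borel subgroup. The action of $T$ on $X$ is then definably isomorphic to the action of $\mathrm{PSL}_2(F)$ on $\mathrm{PG}_1(F)$. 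Since $G$ embeds in $\mathrm{Aut}(T) = \mathrm{P}\Gamma\mathrm{L}_2(F)$, we obtain $\mathrm{PSL}_2(F) \leq G \leq \mathrm{P}\Gamma\mathrm{L}_2(F)$. As $|\mathrm{Out}(\mathrm{PSL}_2(\mathbb{F}_{q_i}))|$ grows only logarithmically in $q_i$, the quotient $G/T$ is $0$-dimensional and $\dim(G) = 3$.

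The main obstacle will be the middle step: turning the abstract hypothesis $\dim(X) = 1$ and the $\widetilde{\mathfrak{M}_s}$/(EX)-conditions into a concrete estimate $|X_i| = O(q_i)$ in the finite factors, so that Fact \ref{fact-psl2} becomes applicable. Because the abstract $\dim$ is not assumed to be the counting dimension, one must argue that on sets interpretable in the pseudofinite field $F$ (including the coset space $T/T_x$ that is in definable bijection with $X$) the abstract dimension agrees with the natural one. A rigidity argument in the spirit of Lemma \ref{lem-dimsame}, combined with Corollary \ref{cor-sudimc} applied to $F$, should supply this identification; verifying the definability hypothesis of that lemma for our abstract $\dim$ on $F$-interpretable sets is the technical heart of the proof.
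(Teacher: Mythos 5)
Your structural setup is correct and matches the paper: Theorem \ref{thm-normalabelian} excludes normal abelian subgroups, Lemma \ref{lemT} gives the decomposition $H=T\odot T^{g_1}\odot\cdots\odot T^{g_m}$, Lemma \ref{lem-C(H)} collapses it to $H=T$ with $C_G(H)=\{\mathsf{1}\}$, and Wilson--Ryten gives the (difference) field $F$. But the middle of your argument has two genuine gaps.

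First, you skip the step that identifies $H_x$ with a subgroup \emph{definable in $H$ as a pure group}. A priori $H_x=\mathsf{Stab}_H(x)$ is only definable in the two-sorted structure $(G,X)$; to transfer the problem into the theory of $F$ you need $X\cong H_x\backslash H$ to be interpretable in $F$, which requires $H_x$ to be definable in the pure group language. The paper gets this through a chain you omit: the (EX)-condition and $\widetilde{\mathfrak{M}_s}$-condition yield Lemma \ref{lem-finiteorbit} (finitely many $H_x$-orbits on $X$), then Seitz's theorem \cite[Theorem 2]{seitz1974small} places $(H_i)_{x_i}$ inside a parabolic $P_i$, then \cite[Lemma 6.2]{stritto2011asymptotic} makes the maximal parabolic $P=\prod P_i'/\mathcal{U}$ definable in $H$, and finally Lemma \ref{lem-E=H_x} (plus a dimension count showing $\mathsf{dim}(P)=\mathsf{dim}(H_x)$) forces $P=H_x$. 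Your plan to replace Fact \ref{fact-psl2} with ``standard CFSG bounds on indices of arbitrary maximal subgroups'' is plausible as a pure group-theoretic statement, but it presupposes exactly the counting bound $|X_i|=O(q_i)$ whose derivation requires the interpretability you have not yet established, so the argument is circular as written.

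Second, the way you propose to compare dimensions is not what the paper does and is likely not provable in the stated generality. You want to show that the abstract $\mathsf{dim}$, restricted to $F$-interpretable sets, is definable and agrees with $\mathsf{dim}_F$, so that Lemma \ref{lem-dimsame} applies with the abstract $\mathsf{dim}$ as one of the two dimensions. The paper never does this: there is no reason the abstract $\mathsf{dim}$ should be definable inside the theory of $F$. Instead, Lemma \ref{lem-dimsame} is applied entirely internally to $F$, comparing $SU_F$ with the counting dimension $\mathsf{dim}_F$ (both agree on subsets of $F$ because $F$ and $(F,\sigma)$ are ultraproducts of one-dimensional asymptotic classes, and $\mathsf{dim}_F$ is definable). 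The bridge from the abstract $\mathsf{dim}$ to $SU_F$ is made indirectly: first, an Indecomposability-Theorem argument shows that every infinite $F$-definable set has abstract dimension $\geq 1$; then a decomposition of a generic point $\bar a\in X$ shows that $SU_F(X)\geq 2$ would force $\mathsf{dim}(X)\geq 2$ by additivity, contradiction, so $SU_F(X)=1$; only now does $\mathsf{dim}_F(X)=1$ and hence $|X_i|\sim r\cdot q_i$ follow. This indirect route is essential precisely because the abstract $\mathsf{dim}$ need not coincide with $SU_F$ on higher-dimensional $F$-definable sets.
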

\begin{proof}
Let $H_i:=soc(G_i)$ and $H:=\prod_{i\in I}soc(G_i)/\mathcal{U}$. By the lemmas above, we know that $H=s(G)$ is definable and $H$ is a pseudofinite simple group. By the main theorem of \cite{wilson1995simple}, there is $J\in \mathcal{U}$ such that $H_j$ is a finite Chevalley group of a fixed Lie type and of fixed Lie rank $n$ for all $j\in J$. Take $x=\prod_{i\in I}x_i/\mathcal{U}\in X$. By Lemma \ref{lem-finiteorbit}, the number of orbits of $(H_i)_{x_i}$ is bounded. Hence, we may apply \cite[Theorem 2]{seitz1974small}. It follows that there is $J'\in\mathcal{U}$ such that $J'\subseteq J$ and for all $i\in J'$ the following holds:
there is a parabolic subgroup $P_i$ of $H_i$ and $x_i\in X_i$ such that $(H_i)_{x_i}\leq P_i$. Let $P_i'$ be the maximal parabolic subgroup which contains $P_i$. Let $P:=\prod_{i\in I}P_i'/\mathcal{U}$. By \cite[Lemma 6.2]{stritto2011asymptotic}, $P\lneq H$ is definable in the language of pure groups with parameters in $H$. Note that $P$ is infinite as $H$ is. Also note that $[H:P]=\infty$, since otherwise, $H$ would have a definable normal subgroup of finite index, contradicting that $H$ is a pseudofinite simple group.

%By \cite[Theorem 1.1.1]{ryten2007model}, any family of finite simple groups of Lie type of bounded Lie rank forms an asymptotic class of groups. Hence, the theory of $H$ in the language of pure groups is supersimple of finite $SU$-rank. 

By \cite[Chapter 5]{ryten2007model},
% Theorem 5.2.4; Theorem 5.3.3; Theorem 5.4.6
$H$ is uniformly bi-interpretable with a pseudofinite field $F$ or a pseudofinite difference field $(F,\sigma)$. 
More precisely, there is $J\in\mathcal{U}$ such that one of the following holds:
\begin{itemize}
\item
For all $j\in J$, we have $H_j$ bi-interprets a finite field $\mathbb{F}_j$, and the bi-interpretation is uniform in $j$;
\item
For all $j\in J$, we have $H_j$ bi-interprets a finite difference field of the form $(\mathbb{F}_{2^{2k_i+1}},\mbox{Frob}_{2^{k_i}})$ for some $k_i$, where $\mbox{Frob}_{2^{k_i}}$ is the map $x\mapsto x^{2^{k_i}}$, and the bi-interpretation is uniform in $j$;
\item
For all $j\in J$, we have $H_j$ bi-interprets a finite difference field of the form $(\mathbb{F}_{3^{2k_i+1}},\mbox{Frob}_{3^{k_i}})$ for some $k_i$, where $\mbox{Frob}_{3^{k_i}}$ is the map $x\mapsto x^{3^{k_i}}$, and the bi-interpretation is uniform in $j$.
\end{itemize}
We may assume $F:=\prod_{i\in I}\mathbb{F}_i/\mathcal{U}$ and $(F,\sigma):=\prod_{i\in I}(\mathbb{F}_{2^{2k_i+1}},\mbox{Frob}_{2^{k_i}})/\mathcal{U}$ or $(F,\sigma):=\prod_{i\in I}(\mathbb{F}_{3^{2k_i+1}},\mbox{Frob}_{3^{k_i}})/\mathcal{U}$.

By \cite[Corollary 3.1]{hrushovski1991pseudo} and \cite[Proposition 3.3.19]{ryten2007model}, the theory of $F$ or $(F,\sigma)$ eliminates imaginaries after adding parameters for an elementary submodel. Since both $P$ and $H$ are interpretable in $F$ or in $(F,\sigma)$, so does the right-coset space $P\backslash H$. By elimination of imaginaries, we may suppose that $P\backslash H$ is a definable subset of $F^m$ for some $m$. 

Now we work in $F$ or $(F,\sigma)$. We denote the $SU$-rank in $F$ or $(F,\sigma)$ as $SU_{F}$. And we call a definable set defined in the language of (difference) rings with parameters in $F$ as $F$-definable. Note that $F$ is an ultraproduct of a one-dimensional asymptotic class by \cite{chatzidakis1992definable} for a pure field, and so is $(F,\sigma)$ by \cite[Theorem 3.5.8]{ryten2007model}. Thus, $SU_F(F)=1$.

We claim that for any infinite $F$-definable set $Y\subseteq F^n$, we have $Y$ has positive dimension in $(G,X)$.

Indeed, since $Y$ is infinite, $SU_{F}(Y)\geq 1$. For $1\leq i\leq m$, consider the projection $\pi_i$ of $F^n$ onto the $i^{th}$ co-ordinate. There must be some $i$ such that $\pi_i(Y)$ is an infinite set, i.e., $SU_{F}(\pi_i(Y))\geq 1$. Since $SU_{F}(F)=1$ and $\pi_i(Y)\subseteq F$, we get $SU_{F}(\pi_i(Y))=1$. By the Indecomposability Theorem, there is a definable subgroup $B$ of $F^+$ such that $B\subseteq (\pm\pi_i(Y))^{k}$ for some $k$-fold sum of $\pm\pi_i(Y)$, and finitely many translates of $B$ cover $\pi_i(Y)$. Hence, $SU_{F}(B)=SU_{F}(F^+)=1$, and $B$ has finite index in $F^+$. As $B\subseteq (\pm\pi_i(Y))^{k}$ we get $\mathsf{dim}(B)\leq k\mathsf{dim}(\pi_i(Y))$. 
Therefore, $$\mathsf{dim}(Y)\geq \mathsf{dim}(\pi_i(Y))\geq\frac{1}{k} \mathsf{dim}(B)=\frac{1}{k}\mathsf{dim}(F^+)\geq \frac{1}{k}>0,$$ where the penultimate inequality is by the fact that $H\subseteq F^m$ for some $m\geq 1$ and $\mathsf{dim}(H)\neq 0$, hence $\mathsf{dim}(F)\geq 1$.

Therefore, $\mathsf{dim}(P\backslash H)\geq 1$ and $\mathsf{dim}(P)\geq 1$. Note that $$\mathsf{dim}(P\backslash H)\leq \mathsf{dim}(H_x\backslash H)=\mathsf{dim}(x^H)=1.$$ Hence, $1\leq\mathsf{dim}(P)=\mathsf{dim}(H_x)<\mathsf{dim}(H)$. And we get $\mathsf{dim}(H)\geq 2$. Since $H$ is a definable normal subgroup of $G$, by Lemma \ref{lem-E=H_x}, we get $P=H_x$.

Note that $X$ is in definable bijection with $H_x \backslash H=P\backslash H$. As $P$ is definable in the language of pure groups with parameters in $H$, the action of $H$ on $X$ is interpretable in $H$ itself, hence also interpretable in $F$ or $(F,\sigma)$.

By elimination of imaginaries, we may assume $X$ is definable subset of $F^m$. Consider $SU_{F}(X)$, i.e. $SU_{F}(P\backslash H)$. We claim that $SU_{F}(X)=1$. 

%Indeed, we have  $\mathsf{dim}(F)=1$. Since if we take $Y=X$, then $1=\mathsf{dim}(X)\geq \mathsf{dim}(\pi_i(X))=\mathsf{dim}(F^+)\geq 1$.

Recall that any infinite $F$-definable set has positive dimension. Therefore, any non-algebraic $F$-type can be completed to a $(G,X)$-type of positive dimension.
Take a generic element $\bar{a}=(a_1,\ldots,a_m)\in F^m$ in $X$. Then there is some $i$ such that $tp_{F}(a_i)$ is non-algebraic. Suppose towards a contradiction that $SU_{F}(X)\geq 2$. Then $$2\leq SU_{F}(\bar{a})=SU_{F}(\bar{a}/a_i)+SU_{F}(a_i)=SU_{F}(\bar{a}/a_i)+1.$$ We get $SU_{F}(\bar{a}/a_i)\geq 1$. By the claim above, we have $\mathsf{dim}(\bar{a}/a_i)\geq 1$ and $\mathsf{dim}(a_i)\geq 1$. By the additivity of dimension, $\mathsf{dim}(X)\geq\mathsf{dim}(\bar{a})=\mathsf{dim}(\bar{a}/a_i)+\mathsf{dim}(a_i)\geq 2$, a contradiction. Therefore, $SU_{F}(X)=1$. 

We conclude that $$SU_F(P\backslash H)=SU_F(X)=1=SU_F(F).$$

%We conclude that the theory of $H$ in the pure language of groups is supersimple of finite rank, and $H$ is bi-interpretable with a pseudofinite (difference) field $F$ with $SU_F(F)=1$. Moreover, $H$ has a definable subgroup $P$ with $$SU_F(P\backslash H)=SU_F(X)=1=SU_F(F)$$ and $P$ is a parabolic subgroup of $H$.

Recall that both $F$ and $(F,\sigma)$ are ultraproducts of one-dimensional asymptotic classes. Let $\mathsf{dim}_F$ be the counting dimension on $F$-definable sets. By Corollary \ref{cor-sudimc}, we have $SU_F$ and $\mathsf{dim}_F$ coincide for all $F$-definable sets. Therefore $\mathsf{dim}_F(X)=1$. By definition of $\mathsf{dim}_F$ there is $r\in\mathbb{R}^{>0}$ such that $$\lim_{\mathcal{U}}\left(\frac{|X_i|}{|\mathbb{F}_i|}\right)=r.$$ Therefore, $$\lim_{\mathcal{U}}\left(\frac{|P_i\backslash H_i|}{|\mathbb{F}_i|}\right)=r.$$ By Fact \ref{fact-psl2}, we must have $H\cong \text{PSL}_2(F)$, and $X$ is definably isomorphic to the projective space $\text{PG}_1(F)$.  

Consider $C_G(H)\trianglelefteq G$. It is trivial by Lemma \ref{lem-C(H)}. Therefore, the action of $G$ on $H$ by conjugation is faithful.

Since $H\cong\prod_{i\in I}\text{PSL}_2(\mathbb{F}_{q_i})/\mathcal{U}$ and the largest automorphism group of $\text{PSL}_2(\mathbb{F}_{q_i})$ is $\text{P}\Gamma\text{L}_2(\mathbb{F}_{q_i})$, we get $\text{PSL}_2(F)\leq G\leq \text{P}\Gamma\text{L}_2(F)$ where $\text{P}\Gamma\text{L}_2(F)=\text{PGL}_2(F)\rtimes Aut(F)$.
%, and $Aut(F)$ are those induced by automorphisms of $F$ in the natural way.
\end{proof}
\medskip

\section{Permutation Groups of Infinite $SU$-Rank} \label{sec6}
In this section, we treat the special case when $(G,X)$ is supersimple of infinite $SU$-rank. It is a natural candidate where our classification can be applied. However, the main result of this section is negative. More precisely, we will show that all these groups of dimension greater or equal to 2 will have SU-rank 2 or 3. Hence, there are no interesting infinite SU-rank case.

By Remark \ref{remark-dim}, Remark \ref{remark-mc} and Remark \ref{remark-ms}, we can take the dimension as the coefficient of the leading term of the $SU$-rank and the $\widetilde{\mathfrak{M}_c}$ and $\widetilde{\mathfrak{M}_s}$-conditions always hold in supersimple theories. To apply our classification, it remains to show that when the dimension is greater or equal to 3, $X$ satisfies the (EX)-condition with the assumption of supersimplicity.

\begin{lemma}\label{lem-omegaalpha}
Suppose $(G,X)\in\mathcal{S}$ and its theory is supersimple. Let $A$ be a definable abelian normal subgroup of $G$ and $SU(A)=\omega^\alpha+\beta$ with $\beta<\omega^\alpha$. Then $SU(A)=\omega^\alpha$.
\end{lemma}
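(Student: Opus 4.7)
The plan is to exploit the interaction between the regular action of $A$ on $X$ and the structural constraints of supersimplicity, and arrive at a contradiction from the assumption $\beta > 0$. First I would observe, via Lemma \ref{claim 5.1} and the fact that $A$ is a nontrivial abelian normal subgroup of the definably primitive $(G,X)$, that $A$ acts regularly on $X$. The orbit map $a \mapsto x^a$ at any fixed $x \in X$ is then a definable bijection $A \to X$, so $SU(A) = SU(X) = \omega^\alpha + \beta$, and it suffices to show $\beta = 0$.

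The key dichotomy I will use is the following: any $G$-invariant definable subgroup $B \leq A$ induces, via the regular action, a $G$-invariant definable equivalence relation $\sim_B$ on $X$, whose classes are the $B$-orbits; by definable primitivity of $(G,X)$, $\sim_B$ is either trivial or universal, which forces $B = \{\mathsf{1}\}$ or $B = A$. Hence every $G$-invariant definable subgroup of $A$ has $SU$-rank either $0$ or $\omega^\alpha + \beta$. The whole proof then consists in producing, under the hypothesis $\beta > 0$, a $G$-invariant definable subgroup of $A$ of $SU$-rank strictly between these two values, contradicting the dichotomy.

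To produce such a subgroup I would use supersimplicity of $T$ to find a complete type $q \in S(A)$ over $\emptyset$ of $SU$-rank exactly $\beta$, obtained by a forking extension of the generic type of $A$ that removes the leading $\omega^\alpha$-part. The translation-stabilizer $\mathrm{Stab}(q) := \{a \in A : a + q = q\}$ is an $\emptyset$-type-definable subgroup of $A$, hence preserved by the conjugation action of $G$ on $A$, hence $G$-invariant. By the standard stabilizer-rank identity for types in supersimple abelian groups, $SU(\mathrm{Stab}(q)) = SU(q) = \beta$. Finally, Remark \ref{remark-mc} provides the $\widetilde{\mathfrak{M}_c}$-condition for $A$, so the descending intersection that defines $\mathrm{Stab}(q)$ stabilizes up to finite index; applying Schlichting's theorem to the $G$-conjugates of a definable subgroup commensurable with $\mathrm{Stab}(q)$ produces a $G$-invariant definable subgroup $B$ of $A$ of $SU$-rank $\beta$. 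Since $0 < \beta < \omega^\alpha + \beta$, this $B$ is proper and nontrivial, contradicting the dichotomy; hence $\beta = 0$.

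The principal obstacle is to make the two supersimple-theoretic inputs rigorous: the existence of an $\emptyset$-type of rank exactly $\beta$, and the stabilizer-rank identity $SU(\mathrm{Stab}(q)) = SU(q)$, together with the passage from type-definable to definable while preserving $G$-invariance. Both steps should be extractable from the structure theory of abelian groups in supersimple theories (cf.\ \cite{Wagner-Supersimple}), but require some care; in particular, should $q$ only be available over a nonempty parameter set, one would first replace it by its canonical base and then argue via the $\widetilde{\mathfrak{M}_c}$-condition that a $G$-invariant definable approximation of the same rank still exists.
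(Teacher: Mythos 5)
The overall shape of your argument is sound: use the dichotomy coming from definable primitivity (any $G$-invariant definable subgroup of $A$ is trivial or all of $A$) and exhibit a $G$-invariant definable subgroup of intermediate $SU$-rank. But the specific mechanism you propose for producing that subgroup has a real gap, and it is precisely the point the paper handles by an explicit structure theorem rather than by a stabilizer computation.

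The problematic step is the claim that $SU(\mathrm{Stab}(q)) = SU(q)$ for a type $q$ of rank $\beta$. There is no such "standard stabilizer-rank identity." What is true in simple theories is the inequality $SU(\mathrm{Stab}(q)) \leq SU(q)$, with equality only when $q$ is (a translate of) a generic type of its stabilizer. For an arbitrary forking extension $q$ of the generic of $A$, the stabilizer can easily be trivial --- think of the generic type of a non-coset curve $y = x^2$ in $F^+ \times F^+$: that type has rank $1$ but its additive stabilizer is $\{0\}$. So nothing forces your $\mathrm{Stab}(q)$ to have rank $\beta$, or even to be nontrivial. A secondary issue is that the existence of a complete type over $\emptyset$ of $SU$-rank \emph{exactly} $\beta$ is not automatic; the definition of $SU$-rank only gives forking extensions of rank $\geq \gamma$ for each $\gamma < SU(p)$, and the relevant type would in any case be over a nonempty parameter set, so $G$-invariance of its stabilizer needs a separate argument.

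The paper sidesteps both problems by not aiming for rank $\beta$ at all. It invokes \cite[Proposition 5.4.3]{Wagner-Supersimple}, which produces a type-definable subgroup $C \leq A$ of $SU$-rank exactly $\omega^\alpha$ (the leading monomial) that is \emph{unique up to commensurability}; that uniqueness is what makes $C^g$ commensurable with $C$ for every $g \in G$, giving $G$-invariance "for free." Then \cite[Lemma 5.5.3]{Wagner-Supersimple} upgrades $C$ to a definable $D$ with $C \leq D \leq A$ and $SU(D) = \omega^\alpha$, Schlichting makes $D$ normal, and definable primitivity forces $D = A$, so $SU(A) = \omega^\alpha$. If you want to rescue your plan you should look for the subgroup of rank $\omega^\alpha$ (where the structure theory exists) rather than of rank $\beta$ (where it does not); the dichotomy and the passage through Schlichting are then fine as you wrote them.
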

\begin{proof}
%Apply \cite[Proposition 5.4.3]{Wagner-Supersimple} to get a type-definable subgroup $D\leq A$ of $SU$-rank $\omega^\alpha$. By \cite[Corollary 4.5.16]{Wagner-Supersimple}, $D$ has a unique minimal type-definable locally connected subgroup $D^c$ which is commensurate with $D$. Hence, $SU(D^c)=SU(D)=\omega^\alpha$. For any $g\in G$, note that both $(D^c)^g$ and $D^c$ are subgroups of $A$ of dimension 1. Therefore, $\mathsf{dim}(D^c\cap (D^c)^g)=1$. We get $\omega^\alpha\leq SU(D^c\cap (D^c)^g)\leq SU(D^c)=\omega^\alpha$. Hence, $(D^c)^g$ is commensurate with $D^c$. As $D^c$ is locally connected, $(D^c)^g=D^c$. We conclude that $D^c$ is normal in $G$. Consider the group $S:=\bigcap_{g\in G}D^g$. Since $D^c\leq S$ and $S$ is a definable subgroup of $D$, we get $SU(S)=SU(D)=SU(D^c)=\omega^\alpha$. Note that $S\leq A$ is also normal in $G$, we get $A=S$, as $A$ is definably minimal normal in $G$. Hence, $SU(X)=SU(A)=\omega^\alpha$.
By \cite[Proposition 5.4.3]{Wagner-Supersimple}, $A$ has a type-definable subgroup $C$ of $SU$-rank $\omega^\alpha$ unique up to commensurability. Since $A$ is normal in $G$, for any $g\in G$ we have $C^g\leq A$. Then $C$ and $C^g$ are commensurable, as $SU(C^g)=\omega^\alpha$ and $C^g\leq A$. By \cite[Lemma 5.5.3] {Wagner-Supersimple}, there is a definable group $D$ with $C\leq D\leq A$ such that $SU(D)=\omega^\alpha$. Since $C\cap C^g\leq D\cap D^g$ and $SU(C\cap C^g)=\omega^\alpha=SU(D)=SU(D^g)$ for any $g\in G$, we get $D$ and $D^g$ are commensurable. By Schlichting's Theorem, we may assume $D$ is normal in $G$. By definably primitivity $D=A$. Therefore, $SU(A)=SU(D)=\omega^\alpha$.
\end{proof}

\begin{corollary}\label{cor2}
Let $(G,X)$ be a pseudofinite definably primitive permutation group whose theory is supersimple. Let $SU(G)=\omega^\alpha n+\gamma$ for some $\gamma<\omega^\alpha$. Suppose $n\geq 3$ and $SU(X)=\omega^\alpha+\beta$ for some $\beta<\omega^\alpha$. Then all the conditions in Theorem \ref{thm-dim3} are satisfied. Hence, there is an interpretable pseudofinite field $F$ such that $X\cong \textup{GL}_1(F)$ and $$\textup{PSL}_2(F)\leq G\leq \textup{P}\Gamma\textup{L}_2(F).$$

Moreover, $G$ is bi-interpretable with $(F,B)$ where $B$ is a group of automorphisms of $F$.
\end{corollary}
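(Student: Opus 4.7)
The plan is to verify the hypotheses of Theorem \ref{thm-dim3} and then derive the bi-interpretability.

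First, I would equip the interpretable sets of $\text{Th}(G,X)$ with the additive integer-valued dimension $\mathsf{dim}$ of Remark \ref{remark-dim}, given by the coefficient of $\omega^\alpha$ in the $SU$-rank. Our hypotheses give $\mathsf{dim}(X)=1$ and $\mathsf{dim}(G)=n\geq 3$. Since supersimplicity implies simplicity, Remarks \ref{remark-mc} and \ref{remark-ms} immediately deliver the $\widetilde{\mathfrak{M}_c}$-condition on $G$ and all its definable sections, as well as the $\widetilde{\mathfrak{M}_s}$-condition on $(G,X)$.

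The key remaining hypothesis is the (EX)-condition on $X$. I would proceed by contradiction: suppose a definable equivalence relation $E$ on $X$ has infinitely many $1$-dimensional classes. Any such class $C$ satisfies $SU(C)\geq\omega^\alpha$, so by Lascar's inequality applied to the canonical projection $X\to X/E$ with generic fiber of rank $\geq\omega^\alpha$, we obtain $\omega^\alpha+SU(X/E)\leq SU(X)=\omega^\alpha+\beta$, and left-cancellation of ordinal addition forces $SU(X/E)\leq\beta<\omega^\alpha$, hence $\mathsf{dim}(X/E)=0$. The main obstacle is then to rule out an infinite set of $1$-dim classes lying inside this dimension-$0$ quotient; I would attack this either through a structural analysis of types of maximal rank in a supersimple theory, or, exploiting pseudofiniteness, by observing that each $1$-dim class occupies a positive fraction of $X$ in the finite approximations, so only boundedly many can coexist in the ultraproduct.

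Once all hypotheses of Theorem \ref{thm-dim3} are verified, it yields $\mathsf{dim}(G)=3$ (hence $n=3$), an interpretable pseudofinite field $F$ of dimension $1$, and, identifying $X$ with $\text{PG}_1(F)$ (the ``$\text{GL}_1(F)$'' in the statement being an apparent typo), inclusions $\text{PSL}_2(F)\leq G\leq \text{P}\Gamma\text{L}_2(F)$ with $s(G)\cong\text{PSL}_2(F)$. For the bi-interpretability, I would use the decomposition $\text{P}\Gamma\text{L}_2(F)=\text{PGL}_2(F)\rtimes\text{Aut}(F)$ and let $B$ be the image of $G$ under the projection onto $\text{Aut}(F)$, enriched by the finite datum recording whether $G$ contains all of $\text{PGL}_2(F)$ or only $\text{PSL}_2(F)$. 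Then $G$ interprets $F$ (from the construction in Theorem \ref{thm-dim3}) and $B$ (as the induced action on $F$), while conversely $(F,B)$ interprets $G$ as the explicit subgroup of $\text{P}\Gamma\text{L}_2(F)$ specified by $B$; checking that these two interpretations are mutually inverse up to definable isomorphism completes the proof.
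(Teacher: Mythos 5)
Your setup and recognition of the (EX)-condition as the crux are both right, and you correctly observe via Lascar that any such quotient $X/E$ has $\mathsf{dim}(X/E)=0$. But you then identify — honestly — that this is not enough, since a dimension-$0$ set need not be finite, and your two escape routes both fail. The ``structural analysis of types of maximal rank'' idea is left entirely unspecified, and the pseudofinite-counting idea is false as stated: a set of $SU$-rank $\omega^\alpha$ inside a set of $SU$-rank $\omega^\alpha+\beta$ (with $\beta>0$) can have vanishing relative measure in the finite approximations, precisely because the coefficient-of-$\omega^\alpha$ dimension carries no control over the lower-order terms, and $SU$-rank is not the counting dimension.

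The paper closes this gap by a different and essential intermediate step: it first proves a Claim that $SU(X)=\omega^\alpha$ exactly, i.e.\ that $\beta=0$. This is done by splitting on whether $\mathrm{soc}(G)$ is abelian (then a normal abelian $A$ acts regularly, $SU(A)=SU(X)$, and Lemma~\ref{lem-omegaalpha} forces $SU(A)=\omega^\alpha$) or non-abelian (then Lemma~\ref{lemT} makes $H=T\times T^{g_1}\times\cdots\times T^{g_m}$ definable with $T$ simple, $SU(T)=\omega^\alpha k$ by the monomiality of simple supersimple groups, and the Lascar inequality for the orbit map forces $\beta=0$). Once $SU(X)=\omega^\alpha$, the (EX)-condition is immediate from Lascar: infinitely many $1$-dimensional classes would give $SU(X)\geq\omega^\alpha+1$, a contradiction. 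Without this intermediate step your argument genuinely does not close. For the bi-interpretability part your plan is the right shape but you do not supply the nontrivial verification; the paper's proof that $G$ interprets exactly $B$ (and no more automorphisms) hinges on the fact that a pure pseudofinite field interprets only the trivial group of its own field automorphisms, which is the substantive content you would still need.
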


\begin{proof}
For any interpretable set $S$ with $SU(S)=\omega^\alpha k+\beta$ for some $\beta<\omega^\alpha$ and $k\geq 0$, we put $\mathsf{dim}(S):=k$. By Remark \ref{remark-dim}, this is an additive integer-valued dimension. Moreover, by supersimplicity $G$ and its definable sections satisfy the $\widetilde{\mathfrak{M}_c}$ and $\widetilde{\mathfrak{M}_s}$-conditions. We only need to check the (EX)-condition. Indeed, we claim that $SU(X)=\omega^\alpha$. Hence, by the Lascar Inequality, $X$ satisfies the (EX)-condition.

\begin{claim}
$SU(X)=\omega^\alpha$.
\end{claim}
\begin{proof}
Let $H:=H_i/\mathcal{U}$, where $H_i$ is a nontrivial minimal normal subgroup of $G_i$. We distinguish two cases: $H$ is abelian and $H$ is non-abelian.

If $H$ is abelian. Then by \cite[Proposition 4.17]{Hempel16} $G$ has a definable finite-by-abelian normal subgroup $A\geq H$. By definably primitivity, $A$ is abelian. %Hence, $A=H$. (why?)
By Lemma \ref{claim 5.1}, $A$ acts regularly on $X$. Since $\mathsf{dim}(X)=1$, we know that $SU(A)=SU(X)=\omega^\alpha+\beta$ for some $\beta<\omega^\alpha$.  By Lemma \ref{lem-omegaalpha}, $SU(A)=\omega^\alpha$. Thus, $SU(X)=\omega^\alpha$.

If $H$ is non-abelian. Then $H$ is definable and $H=T\times T^{g_1}\times\cdots\times T^{g_m}$ for some $m\geq 0$ by Lemma \ref{lemT}. As $T$ is definable and simple, by \cite[Proposition 5.4.9]{Wagner-Supersimple}, $SU(T)=\omega^\alpha k$, for some $k\geq 1$. Therefore, $SU(H)=\omega^\alpha k(m+1)$. Suppose $SU(X)=\omega^\alpha+\beta$ with $\beta<\omega^\alpha$. By the Lascar Inequality, for any $x\in X$, we have $$SU(\mathsf{Stab}_{H}(x))+SU(x^H)\leq SU(H)\leq SU(\mathsf{Stab}_{H}(x))\oplus SU(x^H).$$ As $x^H=X$, we must have $SU(\mathsf{Stab}_{H}(x))=\omega^\alpha (km+k-1)+\gamma$ for some $\gamma<\omega^\alpha$.  Then $$\omega^\alpha k(m+1)= SU(H)\geq SU(\mathsf{Stab}_{H}(x))+SU(x^H)=\omega^\alpha k(m+1)+\beta.$$ We deduce $\beta=0$ and $SU(X)=\omega^\alpha$. 
\end{proof}

By Theorem \ref{thm-dim3} there is an interpretable pseudofinite field $F$ such that $\text{PSL}_2(F)\leq G\leq \text{P}\Gamma\text{L}_2(F)$.

Now we prove that $G$ is bi-interpretable with $(F,B)$ where $B$ is a group of automorphisms of $F$. We identify $G$ with a group between $\text{PSL}_2(F)$ and $\text{P}\Gamma\text{L}_2(F)$ through definable isomorphism. Suppose $(F,B)$ is given and $F=\prod_{i\in I}\mathbb{F}_{q_i}/\mathcal{U}$. As $$\text{P}\Gamma\text{L}_2(\mathbb{F}_{q_i})=\text{PGL}_2(\mathbb{F}_{q_i})\rtimes Gal(\mathbb{F}_{q_i}/\mathbb{F}_{p_i})$$ where $p_i=\mbox{char}(\mathbb{F}_{q_i})$ and $[\text{PGL}_2(\mathbb{F}_{q_i}):\text{PSL}_2(\mathbb{F}_{q_i}))]\leq 2$ for any $i\in I$, we have either $G:=\left(\prod_{i\in I}\text{PSL}_2(\mathbb{F}_{q_i})/\mathcal{U}\right)\rtimes B$ or $G:=\left(\prod_{i\in I}\text{PGL}_2(\mathbb{F}_{q_i})/\mathcal{U}\right)\rtimes B$. Clearly $G$ is interpretable in $(F,B)$ in both cases.

Suppose $G=H\rtimes B$ is given, where $B\leq Aut(F)$. By the argument before, $G$ interprets $F$.  Let $\varphi(g,x,y)$ be the formula expressing: $x,y\in F$ and 
$$\left[\begin{pmatrix} 1 &  x \\ 0 & 1 \end{pmatrix}\right]^g =
\left[\begin{pmatrix}1 &y\\0&1
\end{pmatrix}\right],$$ where $\left[\begin{pmatrix} a &  b\\ c & d \end{pmatrix}\right]$ denotes the coset $\begin{pmatrix} a &  b \\ c & d \end{pmatrix}F^\times$ in $\text{PGL}_2(F)$. Then $\varphi(g,F,F)$ is the graph of a partial function. Let $\xi(g)$ be the formula expressing that $\varphi(g,F,F)$ is the graph of a field automorphism of $F$. Define $\phi(g,x,y):=\varphi(g,x,y)\land \xi(g)$  and $\sim$ be the equivalence relation on $G\times F\times F$ defined as $(g,x,y)\sim (g',x',y')$ if and only if $x=x',y=y'$ and $\varphi(g,F,F)=\varphi(g',F,F)$. Then $\phi(G,F,F)/\sim$ is a group of automorphisms of $F$ containing $B$. We need to show that $\phi(G,F,F)/\sim$ contains no other automorphisms. Note that $\xi(G)$ defines a subgroup of $G$. Then $\xi(G)\cap H=\xi(H)\leq G$. Let $\sim_{H}$ be the equivalence relation such that $g\sim_{H} g'$ if and only if $\varphi(g,F,F)=\varphi(g',F,F)$. Then $\xi(H)/\sim_{H}$ is a group of automorphism of $F$. As $H$  and $\xi(H)$ are interpretable in $F$, so does $\xi(H)/\sim_{H}$. We conclude $\xi(H)/\sim_{H}$ is trivial by the fact that a pure field can only interpret the trivial group of field-automorphisms of itself. Therefore $B=\phi(G,F,F)/\sim$.
%Let $g\in P\Gamma L_2(F)$. We can express by a formula $\varphi(g)$ the following:\begin{enumerate}
%\item
%for all $x\in\mathbb{F}$, there exists one and only one $y\in \mathbb{F}$, such that $\begin{pmatrix}1 &y\\0&y+1
%\end{pmatrix}$ belongs to the coset
%$[\begin{pmatrix} 1 &  x \\ 0 & x+1 \end{pmatrix}]^g$, where $[\begin{pmatrix} 1 &  x \\ 0 & x+1 \end{pmatrix}]$ denotes the coset $\begin{pmatrix} 1 &  x \\ 0 & x+1 \end{pmatrix}F$;
%\item
%let $f_{g}$ be the function whose definition and existence is given by the previous condition. Then $f_g$ is an automorphism of the field $F$.
%\end{enumerate}
%Note that if $g\in P\Gamma L_2(F)$ is induced by an automorphism of $F$, then $\varphi(g)$ holds. Hence, the formula $$\psi(z):=\exists y\in G^*~\land~\exists x\in \text{PGL}_2(F)~(z=xy\land \varphi(xy))$$ defines a group $B$ of automorphisms of $F$. Clearly, $G$ is bi-interpretable with $(F,B)$.
\end{proof}

In the following, we will exclude the possibility that $B$ is infinite. This is due to the fact that any structure that expands a pseudofinite field with a ``logarithmically small" infinite set will have the strict order property, hence, its theory will not be simple. Since an infinite definable set of automorphisms of a pseudofinite field is always ``logarithmically small" compared to the size of the field, $B$ must be finite by simplicity, hence trivial.

\begin{fact}(Folklore, see also \cite[Theorem 30]{ZouDifference})\label{thm-sop}
Let $F=\prod_{i\in I}\mathbb{F}_{{p_i}^{n_i}}/\mathcal{U}$ be a pseudofinite field and $A=\prod_{i\in I}A_i/\mathcal{U}$ an infinite pseudofinite subset of $F$. Suppose there is a constant natural number $C$ such that $|A_i|\leq Cn_i$ for any $i\in I$. Then the theory of $(F,A)$ has the strict order property.
\end{fact}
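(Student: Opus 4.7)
The plan is to exhibit a first-order definable partial order in $(F,A)$ admitting an infinite chain, which yields the strict order property directly. The tool I will exploit is pseudofinite counting: in $(F,A) = \prod_{i \in I}(F_i, A_i)/\mathcal{U}$ and for any formula $\psi(\bar z; \bar w)$, the relation $|\psi(\bar z, \bar a)| \leq |\psi(\bar z, \bar b)|$ is first-order definable in $(F, A)$ by \L{}o\'{s}'s theorem, since in each finite factor it is just a comparison of two explicit finite cardinalities. I will choose $\psi$ so that the function $\bar a \mapsto |\psi(F, \bar a)|$ attains infinitely many distinct values as $\bar a$ ranges over $F$; then the relation $\bar a \leq_\psi \bar b :\Leftrightarrow |\psi(F, \bar a)| \leq |\psi(F, \bar b)|$ is a definable linear preorder whose quotient contains an infinite chain.

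The natural candidate uses the predicate $A$ directly, for example $\psi(z; w) := z \in A \wedge z + w \in A$, whose cardinality $|\psi(F, w)| = |A \cap (w - A)|$ counts the number of representations of $w$ as a difference of two elements of $A$, a number in $\{0, 1, \ldots, |A|\}$. The hypothesis $|A_i| \leq C n_i$ is used to guarantee that this counting function attains a non-standard number of distinct values in the ultraproduct. The heuristic reason is that $A$ sits at a logarithmic scale, strictly finer than any scale accessible in the pure pseudofinite field $F$, which by Corollary \ref{cor-sudimc} is a one-dimensional asymptotic class, so all its pure-field definable subsets have cardinalities of order $O(1)$ or $\Theta(|F|)$. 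The genuinely new intermediate scale introduced by $A$ is what makes counting functions of this form assume a finer range of magnitudes than is available in the pure field reduct.

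The main obstacle is verifying that this intermediate scale really manifests as non-standardly many distinct cardinality values. Highly structured sets $A$ (a coset of an additive subgroup, say) can collapse the counting function to only a handful of values, so the argument must exploit the bound $|A_i| \leq C n_i$ to rule out too much arithmetic structure on $A$. I expect the cleanest route is via additive combinatorics: a Pl\"unnecke--Ruzsa style estimate on the sumset $A + A$ would show that, for infinitely many $w \in F$, $|A \cap (w - A)|$ takes distinct values. An alternative is to argue by contradiction: if only finitely many standard values were realized, the level sets would form a definable partition of $F$ forcing $|A|$ to be either $O(1)$ or $\Theta(|F|)$, both contradicting the logarithmic bound.
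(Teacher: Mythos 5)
Your foundational step is incorrect: the relation $|\psi(F,\bar a)|\le|\psi(F,\bar b)|$ (meant pseudofinitely, i.e.\ holding in almost all factors) is \emph{not} in general first-order definable in $(F,A)$, and \L{}o\'{s}'s theorem does not give this. \L{}o\'{s} transfers truth of a \emph{fixed} first-order formula between the factors and the ultraproduct; it provides no way to turn the statement ``$|\psi(F_i,a_i)|\le|\psi(F_i,b_i)|$'' --- which is a perfectly determinate true/false property of each finite structure, but not the instance of a single formula uniform in $i$ --- into something definable. If such a comparison were definable for a $\psi$ whose fiber cardinalities realize infinitely many values, you would immediately have a definable infinite linear preorder and hence the strict order property; so this step is silently assuming the conclusion. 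The remainder of your sketch (that $w\mapsto|A\cap(w-A)|$ attains nonstandardly many values, ``I expect the cleanest route is via additive combinatorics\dots'') is also only heuristic and never carried out, but the argument already fails at the definability step.

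What is salvageable: replace the cardinality preorder with the \emph{inclusion} preorder $\bar a\preceq_\psi\bar b :\Leftrightarrow \psi(F,\bar a)\subseteq\psi(F,\bar b)$, which genuinely is first-order definable, with no appeal to counting. To deduce SOP you must then exhibit arbitrarily long strict $\preceq_\psi$-chains in the finite factors, and this is where $|A_i|\le Cn_i$ must actually be used: since $2^{|A_i|}\le 2^{Cn_i}\le|F_i|^{C}$, the parameter space $F_i^{C}$ is at least as large as the power set of $A_i$, and one then needs an algebraic nondegeneracy argument (not merely this count) to show that a definable family of subsets of $A_i$ indexed by such parameters realizes a long $\subseteq$-chain. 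That combinatorial/algebraic construction, rather than any definable cardinality comparison, is the real content of the folklore proof that the paper cites.
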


\begin{corollary}\label{thmnonsimple}
Suppose $(F,B)=\prod_{i\in I}(\mathbb{F}_{p_i^{n_i}}, B_i)/\mathcal{U}$ is a pseudofinite structure with $F$ a field and $B$ an infinite set of automorphisms of $F$. Then the theory of $(F,B)$ is not simple.
\end{corollary}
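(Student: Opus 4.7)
The plan is to extract from $B$ a logarithmically small definable subset of $F$ and then invoke Fact \ref{thm-sop}. First, for each $i \in I$ I would choose a primitive element $a_i$ of $\mathbb{F}_{p_i^{n_i}}$ over its prime subfield $\mathbb{F}_{p_i}$ (these always exist because every finite extension of finite fields is simple), and set $a := \prod_{i \in I} a_i / \mathcal{U} \in F$. Since $a_i$ generates $\mathbb{F}_{p_i^{n_i}}$ over $\mathbb{F}_{p_i}$, the evaluation map $\sigma \mapsto \sigma(a_i)$ is injective on $\mathrm{Aut}(\mathbb{F}_{p_i^{n_i}})$; together with $|\mathrm{Aut}(\mathbb{F}_{p_i^{n_i}})| = n_i$, this gives $|A_i| = |B_i| \leq n_i$, where $A_i := \{\sigma(a_i) : \sigma \in B_i\}$.

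Next, I would consider $A := \{\sigma(a) : \sigma \in B\} = \prod_{i \in I} A_i / \mathcal{U}$, which is definable in $(F, B)$ using the parameter $a$ and the evaluation action $B \times F \to F$ built into the structure (as in Corollary \ref{cor2}). Since $B$ is infinite as an ultraproduct, $\lim_{\mathcal{U}} |B_i| = \infty$, so $A$ is an infinite pseudofinite subset of $F$ satisfying $|A_i| \leq n_i$ for every $i$. This is precisely the hypothesis of Fact \ref{thm-sop} with constant $C = 1$, so I would conclude that the theory of $(F, A)$ has the strict order property.

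Finally, because $A$ is definable in $(F, B, a)$---by replacing the predicate $A(x)$ with the formula $\exists \sigma \in B\,(x = \sigma(a))$---any formula witnessing SOP in $(F, A)$ translates into a formula witnessing SOP in $(F, B, a)$. Since adding a single constant preserves simplicity, the theory of $(F, B)$ itself must have SOP, and in particular is not simple. The only point that needs real verification is the logarithmic size bound $|A_i| \leq n_i$, which rests on the standard fact that a Galois automorphism of a finite field is determined by its value on a primitive element; the rest is routine translation between the reduct $(F, A)$ and the expansion $(F, B, a)$.
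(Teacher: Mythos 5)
Your proof is correct and takes essentially the same approach as the paper: pick an element $a$ whose $B$-orbit has the same size as $B$ (the paper uses a generator of $F^\times$, you use a primitive element over the prime subfield — either works, and your injectivity argument is valid), note that this orbit is logarithmically small, and invoke Fact \ref{thm-sop}. You are somewhat more careful than the paper in spelling out the final translation step from $(F,A)$ back to $(F,B)$, which the paper leaves implicit.
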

\begin{proof}
Take $a_i$ a generator of the multiplicative group of $\mathbb{F}_{p_i^{n_i}}$. Define $A_i=a_i^{B_i}$. As $a_i$ is the generator and all $B_i$ are powers of the Frobenius, we have $|A_i|=|B_i|\leq n_i$. Let $A=\prod_{i\in I}A_i/\mathcal{U}$. Then we can apply Fact \ref{thm-sop} to $(F,A)$ and get the desired result.
\end{proof}

Combing the results above, we have the following conclusion.
\begin{theorem}\label{thm-noInfiniteRank}
Let $(G,X)$ be a pseudofinite definably primitive permutation group whose theory is supersimple. Let $SU(G)=\omega^\alpha n+\gamma$ for some $\gamma<\omega^\alpha$ and $n\geq 1$. Suppose $SU(X)=\omega^\alpha+\beta$ for some $\beta<\omega^\alpha$. Then one of the following holds:
\begin{enumerate}
\item
$SU(G)=\omega^\alpha+\gamma$, and there is a definable, divisible torsion-free or elementary abelian subgroup $A$ of $SU$-rank $\omega^\alpha$ which acts regularly on $X$. 
\item
$SU(G)=2$, and there is an interpretable pseudofinite field $F$ of $SU$-rank $1$ such that $G\cong F^+\rtimes D$ where $D$ has finite index in $F^\times$.
\item
$SU(G)=3$, and there is an interpretable pseudofinite field $F$ of $SU$-rank $1$ such that $G\cong \textup{PSL}_2(F)$ or $G\cong \textup{PGL}_2(F)$.
\end{enumerate}

\end{theorem}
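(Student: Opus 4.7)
The plan is to work with the additive integer-valued dimension $\mathsf{dim}$ from Remark \ref{remark-dim}, namely $\mathsf{dim}(S) = k$ when $SU(S) = \omega^\alpha k + \delta$ for some $\delta < \omega^\alpha$; the hypotheses then read $\mathsf{dim}(X) = 1$ and $\mathsf{dim}(G) = n$. Supersimplicity of the ambient theory supplies the $\widetilde{\mathfrak{M}_c}$-condition on $G$ and its sections and the $\widetilde{\mathfrak{M}_s}$-condition on $(G,X)$ for free (Remarks \ref{remark-mc}, \ref{remark-ms}), so the Main Theorem is applicable in each dimension. I will split by $n$; in the two higher cases the central task will be to force $\alpha = 0$ by reducing to bi-interpretability with a \emph{pure} pseudofinite field.

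For $n = 1$, I would apply Theorem \ref{thm-dim1} to obtain a definable normal abelian subgroup $A \trianglelefteq G$ of dimension $1$, either divisible torsion-free or elementary abelian, acting regularly on $X$. Lemma \ref{lem-omegaalpha} then upgrades this to $SU(A) = \omega^\alpha$; regularity transfers it to $SU(X)$, and $G = A \rtimes G_x$ with $\mathsf{dim}(G_x) = 0$ yields $SU(G) = \omega^\alpha + \gamma$, which is case (1). For $n \geq 3$, Corollary \ref{cor2} already provides a bi-interpretation of $(G,X)$ with $(F, B)$ for an interpretable pseudofinite field $F$ and an induced group $B$ of automorphisms of $F$, together with $\textup{PSL}_2(F) \leq G \leq \textup{P}\Gamma\textup{L}_2(F)$. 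Simplicity of the ambient theory together with Corollary \ref{thmnonsimple} forces $B$ finite, and Lemma \ref{lem-Btrivial} then forces $B$ trivial; hence $(G,X)$ is bi-interpretable with the pure pseudofinite field $F$, which has $SU$-rank $1$. Transferring $SU$-rank across the bi-interpretation gives $SU(G) = 3$ (whence $\alpha = 0$ and $n = 3$), and $[\textup{PGL}_2(F):\textup{PSL}_2(F)] \leq 2$ forces $G \cong \textup{PSL}_2(F)$ or $G \cong \textup{PGL}_2(F)$, which is case (3).

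The delicate case is $n = 2$. Theorem \ref{thm-dim2} will supply $G = A \rtimes G_x$ with an interpretable pseudofinite field $F$, $A \cong F^+$, a wide finite-by-abelian $D \trianglelefteq G_x$ embedding into $F^\times$, and an induced $G_x$-action on $F$ by field automorphisms. The same simplicity-plus-Corollary \ref{thmnonsimple}-plus-Lemma \ref{lem-Btrivial} argument as in the case $n \geq 3$ kills the induced automorphism quotient $G_x/\mathsf{PStab}_{G_x}(F)$, and Corollary \ref{cor1} then upgrades the embedding to $G_x \hookrightarrow F^\times$. The crucial extra step is to see that $[F^\times : G_x] < \infty$. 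For this, Lemma \ref{lem-omegaalpha} gives $SU(X) = SU(A) = \omega^\alpha$ exactly, and since $G_x \leq F^\times$ acts freely on $X \setminus \{0\}$ by multiplication, each nontrivial $G_x$-orbit is in definable bijection with $G_x$, hence of $SU$-rank $\omega^\alpha + \epsilon$ for some $\epsilon < \omega^\alpha$. Letting $\sim$ denote the same-orbit equivalence relation on $X$, the Lascar inequality then gives
\[
\omega^\alpha \;=\; SU(X) \;\geq\; SU(G_x) + SU(X/{\sim}) \;=\; \omega^\alpha + \epsilon + SU(X/{\sim}),
\]
which forces both $\epsilon = 0$ and $SU(X/{\sim}) = 0$; in particular $X/{\sim}$ is finite and $[F^\times : G_x] < \infty$. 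Consequently $G_x$ is already definable in the pure pseudofinite field $F$, $(G,X)$ is bi-interpretable with the pure field $F$, and $SU(G) = 2$ (so $\alpha = 0$), yielding case (2) with $D := G_x$.

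The step I expect to be the main obstacle is precisely this $n = 2$ finite-index argument: one must check carefully that the Lascar inequality is applied in the correct generic-fiber form (so that the generic $G_x$-orbit over $X/{\sim}$ really realises the rank $SU(G_x)$), and that ``$G_x$ definable in pure $F$'' genuinely collapses the ambient-theory structure on the sort of $F$ down to the pure-field structure. All other steps are either direct invocations of results proved earlier in the paper or routine $SU$-rank bookkeeping via Lascar.
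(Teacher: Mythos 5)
Your proposal is correct and follows essentially the same route as the paper in all three cases: reduce via Remarks \ref{remark-dim}, \ref{remark-mc}, \ref{remark-ms} to the Main Theorem's setting, then use Lemma \ref{lem-omegaalpha} for $n=1$ and Corollary \ref{thmnonsimple} plus Lemma \ref{lem-Btrivial} to kill the induced automorphism group in the higher cases. The one place you diverge from the paper is the finite-index step in $n=2$: the paper observes that $SU(F^\times)$ is a monomial (a standard fact for fields interpretable in supersimple theories), whence $SU(G_x)=SU(F^\times)=\omega^\alpha$ and Lascar forces $[F^\times:G_x]<\infty$; you instead apply Lemma \ref{lem-omegaalpha} to get $SU(X)=\omega^\alpha$ and then run the Lascar inequality along the orbit fibration of $G_x$ on $X$. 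Both are sound, and the subtlety you flag (picking a generic orbit and then a generic point in it, so that the fiber rank really is $SU(G_x)$) is exactly the care needed; with that, your displayed inequality holds. You leave implicit the fact (which the paper spells out) that a finite-index definable subgroup of $F^\times$ equals $(F^\times)^k$ for $k$ the index, because the finite $F_i^\times$ are cyclic -- this is what turns your ``$G_x$ is definable in pure $F$'' claim from an assertion into an argument, and is worth including, but the overall structure and conclusions match the paper.
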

\begin{proof}
Let $\mathsf{dim}$ be defined as the coefficient of $\omega^\alpha$.

When $n=1$, we apply Theorem \ref{thm-dim1} and get a definable normal abelian subgroup $A$ of $SU$-rank greater than or equal to $\omega^\alpha$. By Lemma \ref{lem-omegaalpha}, we have $SU(A)=\omega^\alpha$.
%By \cite[Proposition 5.4.3]{Wagner-Supersimple}, $B$ has a type-definable subgroup $C$ of $SU$-rank $\omega^\alpha$ unique up to commensurability. Hence, $C$ and $C^g$ are commensurable for any $g\in G$. By \cite[Lemma 5.5.3] {Wagner-Supersimple}, there is a definable group $A$ with $C\leq A\leq B$ such that $SU(A)=\omega^\alpha$. Therefore, $A$ and $A^g$ are commensurable for any $g\in G$. By Schlichting's Theorem, we may assume $A$ is normal in $G$. By definably primitivity $B=A\simeq X$.
%If $B\leq A$ is non-trivial and normal in $G$. Take $b\in B$, then by the Indecomposability Theorem, there is some definable  group $C\leq (b^B)^k$ for some $k\in\mathbb{N}$, which is also normal in $G$. By definably primitivity, $A=C=B$. Hence, $A=soc(G)$. By Claim \ref{claim omega}, we have $SU(A)=\omega^\alpha$.

If $n=2$, then by Theorem \ref{thm-dim2}, there is an interpretable pseudofinite field $F$ of dimension 1 such that $G_x$ induces a group of automorphisms $B$ on $F$. By Corollary \ref{thmnonsimple}, we know that $B$ must be finite. Then by Corollary \ref{cor1}, $G_x$ embeds into $F^\times$ and $B$ is trivial. Since the $SU$-rank of $F^\times$ is a monomial, and $\mathsf{dim}(F)=\mathsf{dim}(G_x)=1$, we get $SU(G_x)=SU(F^\times)=\omega^{\alpha}$. Therefore, $G_x$ has finite index in $F^\times$. Suppose $[F^\times:G_x]=k$. Consider $(F^\times)^k=\{g^k:g\in F^\times\}$. As $F^\times=\prod_{i\in I}F_i^\times/\mathcal{U}$, there is $J\in \mathcal{U}$ such that $F_i$ is cyclic for all $i\in J$ and $(F_i^\times)^k$ is the unique subgroup of index $k$. Therefore, $(F^\times)^k$ is also the unique definable subgroup of index $k$ of $F^\times$. Thus, $G_x=(F^\times)^k$. Now $(G,X)$ is definable in $F$, so $(G,X)$ is supersimple of $SU$-rank 2.

If $n\geq 3$, then by Corollary \ref{cor2}, $(G,X)$ is bi-interpretable with a pseudofinite field $F$ together with a group of automorphisms $B$. By Corollary \ref{thmnonsimple}, $B$ is finite, hence is trivial by Lemma \ref{lem-Btrivial}. Therefore, $\text{PSL}_2(F)\leq G\leq \text{PGL}_2(F)$. For any finite field $\mathbb{F}_q$, we have $[\text{PGL}_2(\mathbb{F}_q):\text{PSL}_2(\mathbb{F}_q)]\leq 2$. Hence, either $G\cong \text{PSL}_2(F)$ or $G\cong \text{PGL}_2(F)$.
\end{proof}

\subsection*{Acknowledgements}
This author is supported by the China Scholarship Council and partially supported by ValCoMo (ANR-13-BS01-0006). 
This paper is based on the author's main PhD thesis project. The author wants to thank her supervisor Frank Wagner for suggesting this interesting yet challenging topic and for his patient guidance and enormous help during the work on this project. She is very grateful to the referee for pointing out problems in the previous version and giving a lot of useful comments and suggestions. She also wants to thank Dugald Macpherson for plenty of helpful email exchanges.

%%%%%%%%%%% To ease editing, use normal size for the references:

\normalsize


\begin{thebibliography}{HD82}

%% Use the widest label as parameter above.
%% Reference items can be numbered or have labels of your choice, as below.
%% Arrange the items in the alphabetical order of names (and not in the order of labels).

%% In IMPAN journals, only the title is italicized; boldface is not used.
%% Do NOT give the issue number unless the issues are paginated separately, as in Uspekhi below.

%% To ease editing, add:

\baselineskip=17pt


%%%%%%%%%%%%%
\bibitem[CDM92]{chatzidakis1992definable} Z. Chatzidakis, L. van den Dries and A. Macintyre, \emph{Definable sets over finite fields}, J. Reine Angew. Math. 427 (1992), 107--135.

\bibitem[EJMR11]{Elwes} R. Elwes, E. Jaligot, D. Macpherson and M. Ryten, \emph{Groups in supersimple and pseudofinite theories}, Proc. Lond. Math. Soc. (3) 103 (2011), no.6, 1049--1082.

\bibitem[H15]{Hempel}N. Hempel, \emph{Almost Group Theory}, arXiv:1509.09087 (2015).

\bibitem[H16]{Hempel16}N. Hempel, \emph{Groups and Fields in Neostable Theories: Chain Conditions and Definable Envelopes}, Ph.D. Thesis, Universit\'{e} Claude Bernard Lyon 1, 2016.\\
https://tel.archives-ouvertes.fr/tel-01393652/document

\bibitem[H89]{hrushovski1989almost} E. Hrushovski, \emph{Almost orthogonal regular types},  Ann. Pure Appl. Logic 45 (1989), no.2, 139--155.

\bibitem[H91]{hrushovski1991pseudo} E. Hrushovski, \emph{Pseudo-finite fields and related structures}, in: Model Theory and Applications, L. B\'{e}lair, Z. Chatzidakis, P. d'Aquino, D. Marker, M. Otero, F. Point, A.J. Wilkie (eds.), Quaderni di Matematica. 11, Aracne, Rome, 2002, 151--212. 

\bibitem[LMT10]{Liebeck} M. Liebeck, D. Macpherson and K. Tent, \emph{Primitive permutation groups of bounded orbital diameter}, Proc. Lond. Math. Soc. (3) 100 (2010), no.1, 216--248.

\bibitem[MS08]{macpherson2008one}
D. Macpherson and C. Steinhorn, \emph{One-dimensional asymptotic classes of finite structures},  Trans. Am. Math. Soc.
360 (2008), no.1,  411--448.

\bibitem[R07]{ryten2007model}
M. Ryten, \emph{Model theory of finite difference fields and simple groups}, 
Ph.D. Thesis, University of Leeds (2007).\\
http://www1.maths.leeds.ac.uk/Pure/staff/macpherson/ryten1.pdf

\bibitem[S74]{seitz1974small}G. Seitz, \emph{Small rank permutation representations of finite Chevalley groups}, J. Algebra 28 (1974), 508--517.

\bibitem[S11]{stritto2011asymptotic} P. Dello Stritto, \emph{Asymptotic classes of finite Moufang polygons}, J. Algebra 332 (2011), 114--135.

\bibitem[W00]{Wagner-Supersimple} F. Wagner, \emph{Simple theories}, Kluwer, Dordrecht, 2000.

\bibitem[W18]{Wagner} F. Wagner, \emph{Dimensional groups and fields}, to appear in J. Symbolic Logic. https://hal.archives-ouvertes.fr/hal-01235178v2/document


\bibitem[W15]{Mcgroups} F. Wagner, \emph{Pseudofinite $\tilde{\mathfrak{M}}_c$-groups},
https://hal.archives-ouvertes.fr/hal-01235178v1/document (2015).

\bibitem[W95]{wilson1995simple} J. Wilson, \emph{On simple pseudofinite groups}, J. London Math. Soc. (2) 51 (1995), no.3, 471--490.

\bibitem[Z18]{ZouDifference} T. Zou, \emph{Pseudofinite difference fields}, arXiv:1709.08801 (2017).

\end{thebibliography}
\end{document}